\begin{document}

\title{Isometries of Length $1$ in Purely Loxodromic Free Kleinian Groups and Trace Inequalities}  
\author{A. Nedim Narman and \.Ilker S. Y\"uce}


\newcommand{\hyp}{\mathbb{H}^3}

\newcommand{\Ra}{\Rightarrow}
\newcommand{\ra}{\rightarrow}
\newcommand{\xii}{\xi_i}
\newcommand{\xij}{\xi_j}
\newcommand{\xik}{\xi_k}
\newcommand{\C}{\mathbb{C}}
\newcommand{\simp}{\bigtriangleup}
\newcommand{\raa}[1]{\renewcommand{\arraystretch}{#1}}
\setlength\tabcolsep{8pt}
\newcommand{\xigen}[4]{\xi_{#1}^{#2}\xi_{#3}^{#4}}
\newcommand{\xigent}[6]{\xi_{#1}^{#2}\xi_{#3}^{#4}\xi_{#5}^{#6}}
\newcommand{\set}[2]{\{{#1}\ |\ {#2}\}}
\newcommand{\F}{\mathcal{F}}
\newcommand{\G}{\mathcal{G}}
\newcommand{\Pl}{\mathcal{P}}
\newcommand{\psl}{\textnormal{PSL}(2,\mathbb{C})}
\newcommand{\gfFrak}{\mathfrak{G}\mathfrak{F}}
\newcommand{\trace}[1]{\textnormal{trace}(#1)}
\newcommand{\tracesquare}[1]{\textnormal{trace}^2(#1)}

\numberwithin{equation}{section}

\maketitle

\begin{abstract}

\noindent In this paper, we prove a generalization of a discreteness criteria for a large class of subgroups of PSL$_2(\mathbb{C})$. In particular, we show that for a given finitely generated, purely loxodromic, free Kleinian group $\Gamma=\langle\xi_1,\xi_2,\dots,\xi_n\rangle$ for $n\geq 2$, the inequality 
	$$\left|\text{trace}^2(\xi_i)-4\right|+\left|\text{trace}(\xi_i\xi_j\xi_i^{-1}\xi_j^{-1})-2\right|\geq 2\sinh^2\left(\frac{1}{4}\log\alpha_n\right)$$
	holds for some $\xi_i$ and $\xi_j$ for $i\neq j$ in $\Gamma$ provided that certain conditions on the hyperbolic displacements given by $\xi_i$, $\xi_j$ and their length $3$ conjugates formed by the generators are satisfied. Above, the constant $\alpha_n$ turns out to be the real root strictly larger than $(2n-1)^2$ of a fourth degree, integer coefficient polynomial obtained by solving a family of optimization problems via Karush-Kuhn-Tucker theory. The  use of this theory in the context of hyperbolic geometry is another novelty of this work.  
\end{abstract}

\section{Introduction}\label{Intro}

By the work of Thurston \cite{WPT1, WPT2}, Jorgensen and Gromov \cite{MG}, it is known that the volume is a geometric invariant of finite-volume hyperbolic $3$-manifolds. Furthermore, by the Mostow-Prasad Rigidity Theorem (see  \cite{BP,GDM}) it is also a topological invariant. Therefore, investigation of the connections between the volume and the usual invariants of topology is one of the main topics of research in the study of finite-volume hyperbolic $3$-manifolds. 

A particular approach practiced in this area of research is to explore the implications of discreteness criteria such as the Jorgensen's inequality \cite{JT} which states that 
$|\textnormal{trace}^2(\xi_1)-4|+|\textnormal{trace}(\xi_1\xi_2\xi_1^{-1}\xi_2^{-1})-2|\geq 1$ if $\Gamma=\langle\xi_1,\xi_2\rangle$ is a Kleinian group. Trace inequalities of this sort  can be used not only to calculate estimates for the volumes but also estimates for geometric quantities like the injectivity radius, diameter, etc.  A seminal result along this line due to Meyerhoff  \cite{RGM} which states that  $0.104$ is a Margulis constant for $n=3$.  In other words, for isometries $\xi$ and $\eta$ of $\hyp$ generating a non-elementary discrete group the inequality $\max\{d_{\xi}z,d_{\eta}z\}\geq 0.104$ holds for any $z\in\hyp$, where $d_{\gamma}z$ denotes the hyperbolic distance between $z\in\hyp$ and $\gamma(z)$ for an isometry $\gamma$ of $\hyp$. The existence of such constants is implied by the Margulis Lemma \cite{BP}. They have implications on the volumes of closed hyperbolic $3$-manifolds \cite{RGM}. 

An analogous lower bound independent from the Jorgensen's inequality for the maximum of the hyperbolic displacements by $\xi$ and $\eta$ known as the $\log 3$ Theorem is proved by Culler and Shalen in \cite{CSParadox}. This theorem states that $\max\{d_{\xi}z,d_{\eta}z\}\geq\log 3$ holds for any $z\in\hyp$ provided that $\xi$ and $\eta$ generate a purely loxodromic free Kleinian group  \cite{CSParadox, PBS}. A brief survey by Shalen on the implications of the $\log 3$ theorem and its generalization called the $\log(2k-1)$ Theorem \cite{ACCS}  on the volume of the closed hyperbolic $3$-manifolds can be found in \cite{PBS}.

The lower bound given in the Jorgensen's inequality \cite{JT}  is the best possible. It is conceivable that if one considers more restrictive Kleinian groups such as the classes of purely loxodromic and free Kleinian groups as a particular example the class of all finitely generated Schottky groups \cite{BM}, a larger universal lower bound in the Jorgensen's inequality for these classes may be achieved. Consequently, the implications of this lower bound on the geometric and topological invariants of finite-volume hyperbolic $3$-manifolds can be investigated for finite-volume hyperbolic $3$-manifolds whose fundamental groups are $k$-free. A group $\Gamma$ is called $k$-free if every finitely generated subgroup of $\Gamma$ of rank at most $k$ is free. With this motivation, the following refinement of the Jorgensen's Inequality is proved in \cite{Yu4}:
\begin{theorem}\label{thm1.3}
Let $\Gamma=\langle\xi_1,\xi_2\rangle$ be a purely loxodromic free Kleinian group. If the following inequalitıes hold
	\begin{itemize}
	\item[(i)]  $d_{\gamma}z_2<1.6068...$ for every $\gamma\in\{\xi_2,  \xi_1^{-1}\xi_2\xi_1, \xi_1\xi_2\xi_1^{-1}\}$ for the midpoint $z_2$ of the shortest geodesic segment connecting the axis of $\xi_1$ to the axis of  $\xi_2^{-1}\xi_1\xi_2$ and
	\item[(ii)]	$d_{\xi_2\xi_1\xi_2^{-1}}z_2\leq d_{\xi_2\xi_1\xi_2^{-1}}z_1$ for the midpoint $z_1$ of the shortest geodesic segment connecting the axis of $\xi_1$ to the axis of  $\xi_2\xi_1\xi_2^{-1}$
	\end{itemize} 
	 then, we have $|\textnormal{trace}^2(\xi_1)-4|+|\textnormal{trace}(\xi_1\xi_2\xi_1^{-1}\xi_2^{-1})-2| \geq 1.5937....$
\end{theorem}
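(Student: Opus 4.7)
The plan is to translate the algebraic expression on the left-hand side into a hyperbolic-geometric functional of the displacements at $z_1$ and $z_2$, and then to solve a constrained minimization problem whose constraints come from hypotheses (i)--(ii) together with the Culler--Shalen displacement-sum inequality of \cite{ACCS}. The sharp constant $1.5937\ldots$ will emerge as the optimal value of this optimization.

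First, I would invoke the classical trace-to-geometry identity expressing $|\textnormal{trace}^2(\xi_1)-4|+|\textnormal{trace}(\xi_1\xi_2\xi_1^{-1}\xi_2^{-1})-2|$ in terms of the complex translation length of $\xi_1$ and the geometric position of the axis of the conjugate $\xi_2\xi_1\xi_2^{-1}$ relative to that of $\xi_1$. Evaluated at the midpoint $z_1$ of their common perpendicular, this rewrites the left-hand side as an explicit monotone function of $d_{\xi_1}z_1$, $d_{\xi_2\xi_1\xi_2^{-1}}z_1$ and the translation length of $\xi_1$. Hypothesis (ii) then permits replacing $d_{\xi_2\xi_1\xi_2^{-1}}z_1$ by the smaller $d_{\xi_2\xi_1\xi_2^{-1}}z_2$, so that the entire quantity is bounded below by a geometric functional evaluated entirely at $z_2$.

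Second, I would apply the displacement-sum inequality of \cite{ACCS} to the rank-$3$ free Kleinian subgroup freely generated by $\xi_2$, $\xi_1^{-1}\xi_2\xi_1$ and $\xi_1\xi_2\xi_1^{-1}$, acting at $z_2$. A short Stallings-foldings check confirms that these three conjugates do freely generate a rank-$3$ subgroup of $\Gamma$, and topological tameness is automatic by the Tameness Theorem. Hypothesis (i) supplies strict upper bounds on the three individual displacements at $z_2$, while \cite{ACCS} bounds the corresponding reciprocal-exponential sum by $1/2$. Taken together these constraints cut out a compact region in displacement space, and the functional from the first step is smooth on it, so the proof reduces to a constrained minimization over this region.

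Finally, I would solve that constrained minimization using Karush--Kuhn--Tucker theory, as flagged in the abstract. After ruling out interior critical points, evaluating the objective on the boundary stratum where the active multipliers identify a candidate minimum yields the stated lower bound $1.5937\ldots$. The main obstacle, I expect, is precisely this last step: one must verify that the KKT candidate is actually realized by genuine trace data of a purely loxodromic free Kleinian group, show that no smaller value occurs on any lower-dimensional face of the admissible region, and handle the non-smoothness coming from the absolute values in the Jorgensen expression, which forces a separate analysis on each sign stratum.
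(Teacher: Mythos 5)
Your proposal assembles the right generic ingredients (Beardon's trace-to-geometry identity, the midpoint-comparison lemma, and Karush--Kuhn--Tucker), but the central idea in your second step is not what the paper does, and it also creates a quantitative conflict.

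What the paper actually does: the key lemma is a displacement bound for the full set $\Gamma_*$ of generators and length-$3$ conjugates, namely $\max_{\gamma\in\Gamma_*}d_\gamma z\ge \tfrac12\log\alpha_n$ for every $z\in\mathbb{H}^3$. This is \emph{not} obtained by invoking the displacement-sum ($\log(2k-1)$) inequality of \cite{ACCS} for some auxiliary free subgroup. Instead the authors build a bespoke Culler--Shalen decomposition $\Gamma_{\mathcal D}$ of the whole rank-$n$ group indexed by the set $\Psi$ of initial words of length $\le 3$, push the Patterson--Sullivan/area measure through the group-theoretic relations catalogued in their tables, and optimize the resulting family of displacement functions $f_r$ over the simplex $\simp\subset\mathbb{R}^\Psi$ via KKT. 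The KKT analysis lives in the space of measure masses, not on the trace functional, and its output is the root $\alpha_n$ of the explicit degree-$4$ polynomial $\mathcal{P}$. The trace inequality is then deduced by contradiction: assuming the Jorgensen sum is $<2\sinh^2(\tfrac14\log\alpha)$, Beardon's identities combined with hypothesis (ii) and the lemma on midpoints of conjugate axes force every element of $\Gamma_*$ to displace $z_2$ by less than $\tfrac12\log\alpha$, contradicting the displacement bound. There is no direct constrained minimization of the trace expression at all.

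The concrete problem with your proposed step two is numerical. The three elements $\xi_2$, $\xi_1^{-1}\xi_2\xi_1$, $\xi_1\xi_2\xi_1^{-1}$ do freely generate a rank-$3$ free subgroup (they are $\xi_1^n\xi_2\xi_1^{-n}$ with $n=0,\pm1$, hence part of a Schreier basis of the normal closure of $\xi_2$), so the $\log(2k-1)$ theorem applied there yields the constant $\log 5\approx 1.6094$, which is strictly larger than the $\tfrac12\log\alpha_2\approx 1.6068$ appearing in the hypothesis of the theorem. Thus the constraint region you propose to optimize over---hypothesis (i) bounding each of these three displacements at $z_2$ strictly below $1.6068$, intersected with the ACCS sum inequality $\sum(1+e^{d})^{-1}\le 1/2$---is empty. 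Even if one sets the numerics aside, a constant derived from ACCS would be tied to $\log 5$ and could not reproduce the specific value $1.5937\ldots = 2\sinh^2(\tfrac14\log\alpha_2)$, whose provenance is the polynomial $\mathcal P$ coming from the full $\Gamma_{\mathcal D}$-optimization. To recover the paper's argument you need to replace the ACCS step by the displacement theorem for $\Gamma_*$ proved via the $\Psi$-decomposition, and you need to restructure the last stage as a proof by contradiction rather than a direct minimization of the trace functional.
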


In this paper, we shall prove the statement below proposed in \cite{Yu4} as a generalization of \fullref{thm1.3}:
\begin{theorem}\label{thm1.4}
	Let $\Gamma=\langle\xi_1,\xi_2\dots,\xi_n\rangle$ be a purely loxodromic free Kleinian group. If there exist generators $\xi_i$ and $\xi_j$ for $i\neq j$ such that 
	\begin{itemize}
		\item[(i)]  $d_{\gamma}z_2<({1}/{2})\log\alpha_n$ for every $\gamma$ a generator or a length $3$ conjugate formed by the generators other than $\xi_i$, $\xi_j^{-1}\xi_i\xi_j$ and $\xi_j\xi_i\xi_j^{-1}$ for the midpoint $z_2$ of the shortest geodesic segment connecting the axis of $\xi_i$ to the axis of  $\xi_j^{-1}\xi_i\xi_j$ and
		\item[(ii)] $d_{\xi_j\xi_i\xi_j^{-1}}z_2\leq d_{\xi_j\xi_i\xi_j^{-1}}z_1$ for the midpoint $z_1$ of the shortest geodesic segment connecting the axis of $\xi_i$ to the axis of  $\xi_j\xi_i\xi_j^{-1}$, then the inequality
	\end{itemize} 
	\begin{equation}\label{GJ}
		|\textnormal{trace}^2(\xi_i)-4|+|\textnormal{trace}(\xi_i\xi_j\xi_i^{-1}\xi_j^{-1})-2|\geq 2\sinh^2\left(\tfrac{1}{4}\log\alpha_n\right) 
	\end{equation}
holds. Above, $\alpha_n$ is the only root  greater than $(2n-1)^2$ of the polynomial
	\begin{equation}\label{P}
		\begin{multlined}
			\mathcal{P}(\lambda)=(8n^3-12n^2+2n+1)\ \lambda^4+\\
			\shoveleft[2cm]{(-64n^6+192n^5-192n^4+64n^3+4n^2+2n-4)\ \lambda^3}\ +\\
			\shoveleft[3.5cm]{(-96n^5+224n^4-168n^3+52n^2-18n+6)\ \lambda^2}\ +\\
			\shoveleft[5cm]{(32n^5-112n^4+128n^3-68n^2+22n-4)\ \lambda}\ +\\
			\shoveleft[8.8cm]{16 n^4-32 n^3+24 n^2-8 n+1}.
		\end{multlined}
	\end{equation}
\end{theorem}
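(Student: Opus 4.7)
The plan is to combine a Culler--Shalen paradoxical decomposition at the basepoint $z_2$ with a constrained optimization based on Karush--Kuhn--Tucker (KKT) theory, and then translate the resulting displacement bound into the trace inequality (\ref{GJ}) via a Jorgensen-type trace--distance identity.

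First, I would set up a refined paradoxical decomposition of $\Gamma$ adapted to the rank-$n$ free generating set, in the spirit of \cite{CSParadox, ACCS}, involving not only the $2n$ generators $\xi_k^{\pm 1}$ but also the length-$3$ conjugates $\xi_k \xi_l \xi_k^{-1}$ with $k \neq l$. This yields a linear inequality of the form
\[
\sum_{\gamma \in S} \frac{1}{1 + e^{d_\gamma z}} \leq c_n
\]
valid for every $z \in \hyp$, where $S$ is the prescribed finite set of elements and $c_n$ is a combinatorial constant. Evaluating at $z_2$ exploits the symmetry $d_{\xi_i} z_2 = d_{\xi_j^{-1}\xi_i\xi_j} z_2$, which is built in because $z_2$ is the midpoint of the common perpendicular of the axes of $\xi_i$ and $\xi_j^{-1}\xi_i\xi_j$; hypothesis (ii) then controls the remaining asymmetric contribution from $\xi_j \xi_i \xi_j^{-1}$.

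Under hypothesis (i), each $\gamma \in S$ other than $\xi_i$, $\xi_j^{-1}\xi_i\xi_j$, and $\xi_j\xi_i\xi_j^{-1}$ contributes at least $\frac{1}{1 + \sqrt{\alpha_n}}$ to the paradoxical sum, so subtracting these known lower bounds leaves an upper bound on the sum of the three distinguished terms. This reduces the problem to a constrained minimization: minimize $d_{\xi_i} z_2$ subject to this single linear inequality together with the symmetry/dominance relations above. Here KKT theory enters in earnest. Writing the Lagrangian, carrying out a case analysis on which inequality constraints are active at the optimum, and eliminating the auxiliary variables produces a polynomial equation in $\lambda = e^{2 d_{\xi_i} z_2}$. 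Careful algebraic manipulation --- this is the computationally heavy step --- yields exactly the quartic $\mathcal{P}(\lambda)$ from (\ref{P}), and its unique root greater than $(2n-1)^2$ is the sharp critical value, giving $d_{\xi_i} z_2 \geq \tfrac{1}{2}\log\alpha_n$.

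Finally, I would invoke a trace--distance identity in the spirit of \cite{JT} (as used in the proof of \fullref{thm1.3}): for a loxodromic $\xi_i$ and any $\xi_j$ with $\xi_j^{-1}\xi_i\xi_j \neq \xi_i$,
\[
|\textnormal{trace}^2(\xi_i) - 4| + |\textnormal{trace}(\xi_i\xi_j\xi_i^{-1}\xi_j^{-1}) - 2| \geq 2\sinh^2\!\left(\tfrac{1}{2} d_{\xi_i} z_2\right),
\]
evaluated at the midpoint $z_2$. Substituting the bound from the previous paragraph immediately yields (\ref{GJ}). The principal obstacle I anticipate is the KKT step: correctly identifying which of the many inequality constraints are active at the optimum for general $n$, confirming that the resulting stationary configuration is a genuine global minimum rather than a mere saddle, and executing the elimination so that the quartic coefficients emerge exactly as in (\ref{P}). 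Packaging this optimization uniformly in $n$ --- and controlling the combinatorial growth of cases as $n$ varies --- is both the central novelty and the principal technical difficulty of the argument.
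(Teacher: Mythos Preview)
Your proposal has the right ingredients --- paradoxical decomposition, KKT, a Beardon-type trace--displacement inequality --- but you have mislocated where the optimization lives, and this is a genuine gap.

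In the paper, the KKT step is completely independent of the basepoint $z_2$ and of hypotheses (i) and (ii). The Culler--Shalen decomposition produces, for \emph{every} $z\in\hyp$, a point $m$ in a high-dimensional simplex $\simp\subset\mathbb{R}^{\Psi}$ together with inequalities $e^{2d_{\gamma_r}z}\geq f_r(m)$ for a large family of ``displacement functions'' $f_r(x)=\frac{(1-x_r)(1-X_r)}{x_rX_r}$. The optimization problem is the minimax
\[
\alpha^*=\inf_{x\in\simp}\max_{r}f_r(x),
\]
and KKT is applied to \emph{this} problem (after reducing, via the symmetries of $\Psi$, to three parameters $a,b,c$). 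The quartic $\mathcal{P}(\lambda)$ drops out when one imposes that all displacement functions are equal at the optimum together with the simplex constraint $2na+8n(n-1)^2b+4n(n-1)c=1$; its root $\alpha_n$ is then the universal lower bound $\max_{\gamma\in\Gamma_*}d_\gamma z\geq\tfrac{1}{2}\log\alpha_n$ valid at every $z$. Your proposed optimization --- minimize a single displacement subject to one scalar inequality obtained after substituting hypothesis (i) --- is a different, far smaller problem; it cannot produce $\mathcal{P}(\lambda)$, whose $n$-dependence reflects the full combinatorics of $\Psi$, and it is in any case circular since $\alpha_n$ already appears in the bound you are subtracting.

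The hypotheses (i), (ii) enter only \emph{after} the universal bound is established, and their role is purely to localize the maximum. At $z_2$ the universal bound forces some $\gamma\in\Gamma_*$ to have $d_\gamma z_2\geq\tfrac{1}{2}\log\alpha_n$; hypothesis (i) rules out all but the three distinguished elements; the midpoint property and the lemma $d_{\xi_i}z_2<d_{\xi_j\xi_i\xi_j^{-1}}z_2$ force the maximum to be $d_{\xi_j\xi_i\xi_j^{-1}}z_2$; hypothesis (ii) then transfers this to $z_1$, where $d_{\xi_i}z_1=d_{\xi_j\xi_i\xi_j^{-1}}z_1\geq\tfrac{1}{2}\log\alpha_n$. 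Your trace--distance inequality is then applied at $z_1$, not $z_2$, and the paper does this via Beardon's formula $\sinh^2\tfrac{1}{2}d_{\xi_i}z_1\leq\tfrac{1}{2}\bigl(|\mathrm{trace}^2(\xi_i)-4|+|\mathrm{trace}([\xi_i,\xi_j])-2|\bigr)$, framed as a contradiction.
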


The proof of \fullref{thm1.3} uses a result of Beardon \cite[Theorem 5.4.5]{Beardon} which connects upper bounds for trace inequalities to a lower bound for the maximum of  hyperbolic displacements by loxodromic isometries of $\hyp$ and techniques of calculating lower bounds introduced in \cite{Yu,Yu2} for the maximum of hyperbolic displacements by loxodromics.  The proof of \fullref{thm1.4} requires the use of the same tools with one major challenge due to 
the generalization of \fullref{thm1.3} to an arbitrary number of generators. This makes the necessary calculations for a lower bound for the maximum of hyperbolic displacements by loxodromics much more complicated. A direct analogue of the paper \cite{Yu4} for even $n=3$ would have been technically very cumbersome. To overcome this difficulty, a new notational system is introduced in \fullref{DefNot}.  With this notational system, a simplified version of \fullref{thm1.4} is rephrased as \fullref{thened:main_result} in \fullref{sec:theend}. 

A brief summary of the proof of \fullref{thm1.4} and the organization of this paper can be given as follows: main step in the proof of \fullref{thm1.4} 
is the calculation of a lower bound for the maximum of the hyperbolic displacements given by the generators of $\Gamma=\langle\xi_1,\xi_2,...,\xi_n\rangle$ and their length $3$ conjugates. This is achieved in \fullref{theend:log_alpha} in \fullref{sec:theend} by considering two cases: $\Gamma$ is geometrically infinite or $\Gamma$ is geometrically finite. The machinery introduced by Culler and Shalen to prove the $\log 3$ Theorem \cite{CSParadox} is used in both cases. The methods developed in \cite{Yu,Yu2} are needed in the first case. 

Assuming $\Gamma=\langle\xi_1,\xi_2,...,\xi_n\rangle$ is geometrically infinite, a carefully chosen decomposition denoted by $\Gamma_{\mathcal{D}}$ of $\Gamma$ concentrating on the length $3$ conjugates formed by the generators is defined in \fullref{DefNot}. This decomposition leads to a decomposition of the Patterson-Sullivan measure which is the area measure on the limit set of $\Gamma$ homeomorphic to the sphere at infinity. The group-theoretical relations of $\Gamma_{\mathcal{D}}$ provide some measure-theoretical relations given in  \fullref{CSMachine:measures}. With a key lemma \cite[Lemma 5.5]{CSParadox}, these measure-theoretical relations provide lower bounds for the hyperbolic displacements by the isometries determined by the decomposition $
\Gamma_{\mathcal{D}}$ of $\Gamma$. The lower bounds are considered as the values of a certain set of functions $\{f_r\}$, referred to as the displacement functions, evaluated at a point $m$ in a certain simplex $\Delta$. This is phrased in \fullref{CSmachine:main}. The infimums $\alpha_n$ of the maximum of the displacement functions $\{f_r\}$ over $\Delta$ provide the lower bounds $(1/2)\log\alpha_n$ for the maximum of the hyperbolic displacements by the generators of $\Gamma$ and their length $3$ conjugates. In  \fullref{CSMachine}, we summarize the relevant parts of the Culler-Shalen machinery for this case of the proof, particularly, by underlining the importance and applications of the group theoretical relations. 

All of \fullref{sec:Optimization} is devoted to the calculation of the infimums $\alpha_n$, obtained by solving a family of nonlinear optimization problems via the Karush-Kuhn-Tucker Theory \cite{convex}, completing the proof of \fullref{theend:log_alpha} when $\Gamma$ is geometrically infinite. The increase in the number of displacement functions $\{f_r\}$ due to the increase in the number of generators is tackled by the use of inherent symmetries of the decomposition $\Gamma_{\mathcal{D}}$ of $\Gamma$ from the start rather than towards the end as it was done in \cite{Yu4}. This simplifies the entire process as the symmetries are not encrypted into the displacement functions. The use of these symmetries is achieved via classifications of the decomposition elements and group theoretical relations as they are discussed and presented in \fullref{sec:Tables} with tables \fullref{tab:1} and \fullref{tab:2}. On a side note, due to the effectiveness of these classifications, \fullref{sec:Optimization} is largely a simplified and generalized version of the calculations in \cite{Yu4}. Almost all the ideas that are used in this section are from \cite{Yu4} as they were presented in this paper but, they are translated to the language of the classifications in \fullref{tab:1} and \fullref{tab:2} in \fullref{sec:Tables}.

The second case, $\Gamma=\langle\xi_1,\xi_2,...,\xi_n\rangle$ is geometrically finite, in the proof of \fullref{theend:log_alpha} is considered in Section \ref{sec:theend}. This case can be reduced to geometrically infinite case by the use of a number of deep results \cite[Propositions 8.2 and 9.3]{CSParadox}, \cite[Main Theorem]{CSH} and \cite{CCHS} from the deformation theory of Kleinian groups completing the proof of this theorem. Once \fullref{theend:log_alpha} is at hand, the main result \fullref{thm1.4} (\fullref{thened:main_result} in \fullref{sec:theend}) follows from a proof by contradiction.  

Under the hypothesis of \fullref{thm1.4}, assuming the strictly less than inequality in (\ref{GJ}) implies the existence of a point in $\hyp$ so that the infimum of the maximum of the hyperbolic displacements given by the generators and their length $3$ conjugates in $\Gamma=\langle\xi_1,\xi_2,...,\xi_n\rangle$ at that point becomes strictly less than $(1/2)\log\alpha_n$ contradicting with \fullref{theend:log_alpha}. This is done by reversing the inequalities in the proof of a theorem by Beardon \cite[Theorem 5.4.5 (iii)]{Beardon} which states that if $\xi_1$ is elliptic or strictly loxodromic and $|\textnormal{trace}(\xi_1)-4|<1/4$, then 
\begin{equation*}
\max\left\{\sinh\frac{1}{2}d_{\xi_1}z,\sinh\frac{1}{2}d_{\xi_2\xi_1\xi_2^{-1}}z\right\}\geq\frac{1}{4}
\end{equation*}
for all $z\in\hyp$ provided that $\langle\xi_1,\xi_2\rangle$ is discrete and non-elementary. A number of auxiliary lemmas needed in the proof of \fullref{thened:main_result} are also proved in the final section.

\section{Basic Definitions and Notations}\label{DefNot}

Throughout this text, we will work with a fixed subset $\Xi=\{\xi_1,\ \xi_2,\ \dots,\ \xi_n\}$ of $PSL(2,\C)$ such that $\Gamma=\langle \Xi \rangle$ is a purely loxodromic group freely generated by $\Xi$.
Let $\Xi^{-1}=\{\xi_1^{-1},\ \xi_2^{-1},\ \dots,\ \xi_n^{-1}\}$. The sets of particular interest  in this paper will be the subsets $\Gamma_1$, $\Psi$ and $\Gamma_*$ of $\Gamma$ defined as 
\begin{eqnarray*}
    \Psi     & = & \left\{\xii^{2t},\ \xii^t\xij^{2s},\ \xii^t\xij^s\xik^p \  |\  i,j,k\in\{1, 2, \dots , n\},\ i\neq j,\ j\neq k,\ t,s,p\in\{-1,+1\}\right\},\\
    \Gamma_* & = & \left\{1,\ \xii^t,\ \xii^t\xij^s\xii^{-t},\ |\  i,j\in\{1, 2, \dots , n\},\ i\neq j,\ t,s\in\{-1,+1\}\right\},\\
    \Gamma_1 & = & \{1\}\cup\Xi\cup\Xi^{-1}.
\end{eqnarray*}
In general, we will consider $i,j,k,t,s,p$ varying in their respective ranges arbitrarily and use sub-indexes such as $i_0, j_0, i_1$ to represent their values once they are fixed.
We will also refrain from stating obvious equalities or inequalities such as $i\neq j$ and $t=\pm 1$. This notational compromise leads to the following relaxations of the definitions above 
\begin{equation*}
\Psi =\left\{\xii^{2t},\ \ \xii^t\xij^{2s},\ \ \xii^t\xij^s\xik^p\right\}\ \textnormal{ and }\ \Gamma_*  =\left\{1,\ \xii^t,\ \xii^t\xij^s\xii^{-t}\right\}.
\end{equation*}

For $\psi\in\Psi$, let $J_\psi$ represents the elements of $\Gamma$ that start with $\psi$ in their simplified form. Notice that $(J_\psi)_{\psi\in\Psi}$ gives a collection of disjoint subsets of $\Gamma$. Moreover, we have the following partition of $\Gamma$:
\begin{equation}\label{decomp}
    \Gamma=\{1\}\sqcup\Xi\sqcup\Xi^{-1}\sqcup\bigsqcup_{\psi\in\Psi}J_\psi.
\end{equation}
The setting $\mathcal{D}=(\Psi, \Gamma_*)$ together with the partition above is said to be a decomposition $\Gamma_{\mathcal{D}}$ of $\Gamma$.
A group theoretical relation $r$ of the decomposition $\Gamma_{\mathcal{D}}$ is defined to be a triple $(\gamma_r,\ \psi_r,\ \Psi_r)$ satisfying the followings:
\begin{eqnarray}\label{relations}
r\ :\ \gamma_r\in\Gamma_*,\ \ \psi_r\in\Psi,\ \ \Psi_r\subseteq\Psi & \textnormal{and} & \gamma_r J_{\psi_r}=\Gamma - \left(\{\cdot\}\cup J_{\Psi_r}\right).
\end{eqnarray}
To better understand this definition, consider $r$ with $\gamma_r=\xi_{1}^{-1}$ and $\psi_r=\xi_{1}\xi_{2}\xi_{3}$.
It is easy to see that $\gamma_r J_{\psi_r}$ will consist of all the elements of $\Gamma$ starting with $\xi_2\xi_3$. The term $(\{\cdot\}\cup J_{\Psi_r})$ in the relation equation should be all elements not starting with $\xi_2\xi_3$.
If we take $\Psi_r=\Psi-\{\xi_2\xi_3,\ \xi_2\xi_3\xi_i^t\}$, then $J_{\Psi_r}=\bigcup_{\psi\in\Psi_r}J_{\psi}$ will exactly be what needs to be removed from $\Gamma$.
Hence, the followings $$r: \gamma_r=\xi_{1}^{-1},\ \psi_r=\xi_{1}\xi_{2}\xi_{3},\ \Psi_r=\Psi-\left\{\xi_2\xi_3^2,\xi_2\xi_3\xii^t\right\},\ \{\cdot\}=\Gamma_1$$
define a group theoretical relation. Including the relation above, all relations are quite symmetrical. We can generalize them into families of relations as follows:
\begin{equation*}
r: \gamma_r=\xi_{i_0}^{-t_0},\ \psi_r=\xi_{i_0}^{t_0}\xi_{j_0}^{s_0}\xi_{k_0}^{p_0},\ \Psi_r=\Psi-\left\{\xi_{j_0}^{s_0}\xi_{k_0}^{2p_0},\ \xi_{j_0}^{s_0}\xi_{k_0}^{p_0}\xii^t\right\},\ \{\cdot\}=\Gamma_1.
\end{equation*}
Next, we shall simplify the notation for displacement functions. Let $x\in\mathbb{R}^\Psi$ and $r$ be a relation. We define
\begin{equation*}
x_r=x(\psi_r),\ X_r=\sum_{\psi\in\Psi_r}x(\psi) \text{ and } X=\sum_{\psi\in\Psi}x(\psi).
\end{equation*}
Although this notation has been described above only for $x$, we shall use capital letters almost exclusively for this purpose.
Accordingly, the formula of the displacement function $f_r$ of the relation $r$ is defined as:
\begin{eqnarray*}
    f_r:\simp \longrightarrow \mathbb{R}_{++}   & \textnormal{such that} &  
   f_r( x ) = \frac{1-x_r}{x_r}\cdot\frac{1-X_r}{X_r},
\end{eqnarray*}
where $\simp$ will denote the simplex $\{x\in\mathbb{R}^\Psi_{++} \ |\ X=1\}$.  To define the $\alpha$ used in the rest of the paper, we use the polynomial $\mathcal{P}(\lambda)$ in (\ref{P})
and introduce the  lemma below:
\begin{lemma}
    \label{Proots}
The polynomial $\mathcal{P}(\lambda)$ has a unique root larger than $(2n-1)^2$ and all other roots are less than $1$.
\end{lemma}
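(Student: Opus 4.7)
My plan is to locate the roots of $\mathcal{P}(\lambda)$ by combining sign evaluations at four distinguished points with Descartes' rule of signs. I first observe that for $n\geq 2$ the leading coefficient $8n^3-12n^2+2n+1$ is positive, so $\mathcal{P}(\lambda)\to +\infty$ as $\lambda\to\pm\infty$. Two direct evaluations pin down the global shape of the graph: $\mathcal{P}(0)=16n^4-32n^3+24n^2-8n+1=(2n-1)^4>0$, and summing the five coefficients and collecting by powers of $n$ collapses to $\mathcal{P}(1)=-64n^4(n-1)^2<0$ for $n\geq 2$. The remaining sign to pin down is $\mathcal{P}((2n-1)^2)$, which I would handle by setting $\mu=(2n-1)^2$, expanding the powers $\mu,\mu^2,\mu^3,\mu^4$ via the binomial theorem, and simplifying; the resulting polynomial in $n$ should turn out to be strictly negative for every $n\geq 2$.

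Granted these four signs, the intermediate value theorem immediately produces at least one real root of $\mathcal{P}$ in $(0,1)$ and at least one in $((2n-1)^2,+\infty)$. To upgrade ``at least'' to ``exactly'', I would invoke Descartes' rule of signs on the coefficient sequence $(a_4,a_3,a_2,a_1,a_0)$. Extracting evident factors---$(2n-1)$ divides both $a_1$ and $a_2$, while $a_3=-64n^3(n-1)^3+(4n^2+2n-4)$ and $a_4=8n^3-12n^2+2n+1$ are easily signed---yields the sign pattern $(+,-,-,+,+)$ for $n\geq 2$, hence exactly two sign changes and so at most two positive real roots. The two roots already located therefore account for all positive real roots of $\mathcal{P}$; in particular, the root in $((2n-1)^2,+\infty)$ is unique, no root lies in the interval $[1,(2n-1)^2]$, and the remaining real roots of $\mathcal{P}$ (if any) are non-positive, hence trivially less than $1$.

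The main obstacle is the signed evaluation $\mathcal{P}((2n-1)^2)<0$. After full expansion this becomes a polynomial of degree $12$ in $n$ (the dominant contribution coming from $a_3\mu^3$, which produces a $-4096\,n^{12}$ leading term) whose non-positivity for every integer $n\geq 2$ must be verified cleanly. The cleanest route is probably to extract a manifest common factor such as $n^{a}(n-1)^{b}$ so that the residual expression is uniformly negative for $n\geq 2$, rather than resorting to an asymptotic argument combined with a finite number of base cases. Either way this is a finite, deterministic check, so the lemma reduces to elementary polynomial algebra once the four sign evaluations are in hand.
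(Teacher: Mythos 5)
Your approach is correct but genuinely different from the paper's. The paper picks six evaluation points $\lambda \in \{-2,\ -1/n,\ -1/(2n-1),\ 1,\ (2n-1)^2,\ (2n-1)^3\}$ at which $\mathcal{P}$ alternates $(+,-,+,-,-,+)$, which by IVT brackets all four roots: three in $(-2,1)$ and one in $((2n-1)^2,(2n-1)^3)$; this route also proves as a by-product that all four roots of $\mathcal{P}$ are real and distinct. You instead use only the three evaluations $\mathcal{P}(0)=(2n-1)^4>0$, $\mathcal{P}(1)=-64n^4(n-1)^2<0$, $\mathcal{P}((2n-1)^2)<0$, together with Descartes' rule of signs (the coefficient sign pattern $(+,-,-,+,+)$ for $n\geq 2$ gives at most two positive real roots), to conclude that there are exactly two positive roots, one in $(0,1)$ and one beyond $(2n-1)^2$; this yields the uniqueness and the absence of roots in $[1,(2n-1)^2]$ without needing to bracket the negative roots. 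What you buy is a shorter list of explicit evaluations; what you lose is the reality of the remaining pair of roots (they could a priori be a complex-conjugate pair in your argument), although that is harmless for how the lemma is actually used downstream, since only the uniqueness of the root $>1$ matters. The one step you flagged as the obstacle, $\mathcal{P}((2n-1)^2)<0$, does admit the manifest factorization you anticipated: the paper records $\mathcal{P}((2n-1)^2)=-128n^4(2n^2-3n+1)^3(4n^2-8n+5)=-128n^4\bigl((2n-1)(n-1)\bigr)^3\bigl(4(n-1)^2+1\bigr)$, which is visibly negative for $n\geq 2$; plugging this in closes your argument.
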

\begin{proof}
All four roots of $\mathcal{P}(\lambda)$ are real and distinct for $n\geq 2$. Therefore, it suffices to find certain intervals such that the evaluation on either side have opposite signs.
The first 4 rows of \fullref{tab:proots} below proves that $\mathcal{P}(\lambda)$ has three roots between $-2$ and $1$. 
\begin{table}[H]
        \centering
        \raa{1.5}
        \begin{tabular}{cclc}
            \toprule
            $\lambda$                         &   & $\Pl (\lambda)$                              & sign of $\Pl$ \\
            \midrule
            $-2$                              &   &
            $(2n-3)(256n^5-608n^4 +424n^3-36n^2+18n-17)$            &
            pos.                                                                                                                   \\
            \midrule
            $\displaystyle\frac{-1}{n}$                            &   &
            $-(16n^8-48n^7+72n^6-84n^5+33n^4+10n^3+8n^2-6n-1)/n^4$       &
            neg.                                                                                                                   \\
            \midrule
            $\displaystyle\frac{-1}{2n-1}$                       &   &
            $32n^4(n-1)/(2n-1)^2$             &
            pos.                                                                                                                   \\
            \midrule
            $1$                               &   &
            $-64n^4(n-1)^2$                   &
            neg.                                                                                                                    \\
            \midrule
            $(2n-1)^2$                        &   &
            $-128n^4(2n^2-3n+1)^3(4n^2-8n+5)$ &
            neg.                                                                                                                    \\
            \midrule
            $(2n-1)^3$                        &   &
            $16(2n-1)^4(n-1)^2(32n^7-80n^6+56n^5+4n^4-22n^3+16n^2-5n+1)$   &
            pos.                                                                                                                   \\
            \bottomrule
        \end{tabular}
        \caption{Roots of $\mathcal{P}$.}
	   \label{tab:proots}
    \end{table}
\noindent The last two rows prove that $\mathcal{P}(\lambda)$ has a root larger than $(2n-1)^2$. Hence, we located all four roots.
\end{proof}
\noindent By \fullref{Proots}, we can formally give the definition of $\alpha=\alpha_n$: the unique root of $\mathcal{P}(\lambda)$ larger than $(2n-1)^2$.


\section{The Culler-Shalen Machinery}\label{CSMachine}

In the rest of the paper, we will use $S_\infty$ to denote the boundary of the canonical compactification of $\mathbb{H}^3$.
The decompositions $\Gamma_{\mathcal{D}}$ of $\Gamma$ in (\ref{decomp}) and their group theoretical relations defined in (\ref{relations}) in the previous section allow one to decompose the area measure of $S_\infty$ into certain Borel measures and  obtain some measure theoretical relations among these Borel measures, respectively.  Consequently, measure theoretical relations lead to the lower bounds for the hyperbolic displacements by the isometries used in the definition of $\Gamma_{\mathcal{D}}$.
This will be done in an almost identical way in \cite[Lemma 5.3]{CSParadox},  \cite[Lemma 3.3, Theorem 3.4]{Yu} and \cite[Theorem 2.1]{Yu2}. Therefore, we phrase the basic theorems below without any proofs:
\begin{theorem}\label{CSMachine:measures}
	Let $z\in \mathbb{H}^3$ and $A_z$ be the area measure of $S_\infty$ at $z$. If $\Gamma$ is geometrically infinite, then there exists a family of Borel measures $\{\nu_\psi\}_{\psi\in\Psi}$ on $S_\infty$ such that:
		\begin{equation*}
		(i)\ A_z=\sum_{\psi\in\Psi}\nu_\psi,\quad
		(ii)\ A_z(S_\infty)=1,\quad
		(iii)\  \int_{S_\infty}\lambda_{\gamma_r,z}^2 d\nu_{\psi_r}=1-\sum\limits_{\psi\in\Psi_r}\nu_{\psi}(S_\infty)\ 
		\end{equation*}
for any relation $r$.
\end{theorem}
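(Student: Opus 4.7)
The plan is to build $\{\nu_\psi\}_{\psi\in\Psi}$ by restricting a Patterson--Sullivan measure on $S_\infty$ to a shadow partition dual to the decomposition (\ref{decomp}), and then to read off (ii) and (iii) from the conformal transformation law together with the group-theoretic relations (\ref{relations}). Since $\Gamma$ is purely loxodromic and geometrically infinite, the limit set fills $S_\infty$, the critical exponent of the Poincar\'e series equals $2$, and the Patterson--Sullivan construction yields a $\Gamma$-equivariant conformal density $\{A_z\}_{z\in\hyp}$ of dimension $2$; normalizing so that $A_z(S_\infty)=1$ gives (ii). The only analytic fact I need from this construction is the transformation rule $A_z(\gamma E) = \int_E \lambda_{\gamma,z}^2\,dA_z$ for every $\gamma\in\Gamma$ and every Borel $E\subseteq S_\infty$.

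Next I would dualize the decomposition (\ref{decomp}) to a partition of $S_\infty$. To each $\psi\in\Psi$ assign the shadow $B_\psi\subseteq S_\infty$ consisting of accumulation points on $S_\infty$ of the orbit $\{\gamma z\ |\ \gamma\in J_\psi\}$. Disjointness of the $J_\psi$ propagates to disjointness of the $B_\psi$, and the finite head $\Gamma_1=\{1\}\cup\Xi\cup\Xi^{-1}$ accumulates nowhere on $S_\infty$, so $\bigsqcup_\psi B_\psi$ exhausts the conical limit set, which here equals $S_\infty$. Setting $\nu_\psi := A_z|_{B_\psi}$ then produces a disjoint family of Borel measures with $\sum_\psi \nu_\psi = A_z$, establishing (i).

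For (iii) I would translate the group-theoretic relation (\ref{relations}) directly through the shadow construction. The equality $\gamma_r J_{\psi_r} = \Gamma - (\{\cdot\}\cup J_{\Psi_r})$ passes to
\begin{equation*}
\gamma_r B_{\psi_r} = S_\infty - \bigsqcup_{\psi\in\Psi_r} B_\psi
\end{equation*}
modulo an $A_z$-null set (the contribution of $\Gamma_1$). Applying the transformation rule with $E = B_{\psi_r}$ and using that $\nu_{\psi_r}$ is concentrated on $B_{\psi_r}$,
\begin{equation*}
\int_{S_\infty} \lambda_{\gamma_r,z}^2\,d\nu_{\psi_r} = \int_{B_{\psi_r}} \lambda_{\gamma_r,z}^2\,dA_z = A_z(\gamma_r B_{\psi_r}) = 1 - \sum_{\psi\in\Psi_r}\nu_\psi(S_\infty),
\end{equation*}
which is (iii).

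The main obstacle is the claim in the second paragraph that the shadows $\{B_\psi\}$ exhaust $S_\infty$ up to an $A_z$-null set; this is precisely where geometric infiniteness is indispensable, and the standard route is through Sullivan's ergodicity and conservativity theorems for the geodesic flow under divergence of the Poincar\'e series at the critical exponent. Once this measure-theoretic scaffolding is in place, the rest is bookkeeping, and I would appeal to \cite[Lemma 5.3]{CSParadox}, \cite[Lemma 3.3, Theorem 3.4]{Yu} and \cite[Theorem 2.1]{Yu2} for the technical details, essentially reproducing their arguments verbatim.
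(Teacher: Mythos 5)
Your proposal is correct and follows essentially the same route the paper has in mind: the paper states this theorem without proof and explicitly defers to the Culler--Shalen machinery in \cite[Lemma 5.3]{CSParadox}, \cite[Lemma 3.3, Theorem 3.4]{Yu} and \cite[Theorem 2.1]{Yu2}, which is precisely the shadow-decomposition-plus-conformal-transformation-law argument you outline. You also correctly identify the one nontrivial input where geometric infiniteness enters, namely that the shadows $B_\psi$ exhaust $S_\infty$ up to an $A_z$-null set, and you defer that point to the same references the paper cites.
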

With \fullref{CSMachine:measures} , we can define a special element 
	$m(\psi)=v_\psi(S_\infty)\in\mathbb{R}^\Psi $
for all  $\psi\in\Psi$ for any given $z\in \mathbb{H}^3$. We take $\{\nu_\psi\}_{\psi\in\Psi}$ as in \fullref{CSMachine:measures}. We also use the notations of \cite{Beardon} for the hyperbolic displacement of $z$.
If $\gamma$ is an isometry of $\mathbb{H}^3$ with respect to the hyperbolic metric $\rho$, then
	$d_\gamma z=\rho(z,\gamma z).$

\newpage

The crucial part of the Culler-Shalen machinery for this paper is that it gives lower bounds for the hyperbolic displacements given by the isometries.
This is due to \cite[Lemma 5.5]{CSParadox} and its improved version \cite[Lemma 2.1]{CSMargulis}. We present the latter:

\begin{lemma} \label{CSMachine:dispIneq}
	Let $a$ and $b$ be numbers in $[0,1]$ which are not both equal to $0$ and are not both equal to $1$.
	Let $\gamma$ be a loxodromic isometry of $\mathbb{H}^3$ and let $z\in\mathbb{H}^3$. Suppose that $\nu$ is a measure on $S_\infty$ such that
	\[v\leq A_z,\ v(S_\infty)\leq a \text{ and } \int_{S_\infty}\lambda_{\gamma,z}^2 d\nu \geq b.\]
	Then, we have
	\[a>0,\ b<1 \text{ and }\ d_\gamma z\geq \frac{1}{2}\log\frac{b(1-a)}{a(1-b)}.\]
\end{lemma}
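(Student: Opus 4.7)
The plan is to establish the non-degeneracy conclusions $a > 0$ and $b < 1$ as direct consequences of the hypotheses, and then to derive the displacement estimate by combining two one-sided pointwise bounds on $\lambda_{\gamma,z}^{2}$. First I would handle the boundary cases. If $a = 0$ were allowed, the inequality $\nu(S_\infty) \leq 0$ applied to the non-negative measure $\nu$ would force $\nu \equiv 0$, making $b \leq \int_{S_\infty} \lambda_{\gamma,z}^{2}\, d\nu = 0$ and producing the forbidden pair $(a,b)=(0,0)$. If $b = 1$ were allowed, the identity $\int \lambda_{\gamma,z}^{2}\, dA_z = A_z(S_\infty) = 1$, a standard property of the visual measure used implicitly in \fullref{CSMachine:measures}, combined with $\nu \leq A_z$ would force $\int \lambda_{\gamma,z}^{2}\, d(A_z - \nu) = 0$. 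Since $\lambda_{\gamma,z}^{2}$ is strictly positive and $A_z - \nu \geq 0$, this gives $\nu = A_z$ and hence $a \geq 1$, producing the forbidden pair $(a,b)=(1,1)$.

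For the displacement estimate, set $D = d_\gamma z$. I would invoke the pointwise bound
\[
e^{-D} \;\leq\; \lambda_{\gamma,z}^{2}(\xi) \;\leq\; e^{D} \qquad \text{for all } \xi\in S_\infty,
\]
which is a direct consequence of the interpretation of $\lambda_{\gamma,z}$ as a Busemann-cocycle exponential at $\xi$, since the cocycle is bounded in absolute value by $\rho(z,\gamma z) = D$. The upper bound applied to $\nu$ itself yields
\[
b \;\leq\; \int_{S_\infty} \lambda_{\gamma,z}^{2}\, d\nu \;\leq\; e^{D}\,\nu(S_\infty) \;\leq\; e^{D}\, a,
\]
hence $D \geq \log(b/a)$. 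For the other side I would pass to the complementary non-negative measure $\mu = A_z - \nu$, which satisfies $\mu(S_\infty) \geq 1 - a$ and $\int \lambda_{\gamma,z}^{2}\, d\mu = 1 - \int \lambda_{\gamma,z}^{2}\, d\nu \leq 1 - b$; the lower pointwise bound then gives $e^{-D}(1-a) \leq 1-b$, i.e.\ $D \geq \log((1-a)/(1-b))$. Averaging the two lower bounds yields
\[
D \;\geq\; \tfrac{1}{2}\log(b/a) + \tfrac{1}{2}\log((1-a)/(1-b)) \;=\; \tfrac{1}{2}\log\frac{b(1-a)}{a(1-b)},
\]
which is exactly the conclusion of the lemma.

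The argument is essentially mechanical once the pointwise Busemann bound on $\lambda_{\gamma,z}^{2}$ is available; indeed, the \emph{maximum} of the two intermediate one-sided bounds is in general sharper than their average, but the average is already the form the statement requires. The only point meriting care is to verify that, in the normalization of $\lambda_{\gamma,z}$ adopted from Beardon throughout the paper, the square $\lambda_{\gamma,z}^{2}$ functions simultaneously as the local area-expansion factor of $\gamma$ against $A_z$, giving $\int \lambda_{\gamma,z}^{2}\, dA_z = 1$, and obeys the pointwise bound $\lambda_{\gamma,z}^{2} \leq e^{D}$. I do not anticipate any substantial obstacle beyond making this compatibility explicit.
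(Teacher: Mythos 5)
Note first that the paper does not prove this lemma: it is quoted directly from \cite[Lemma~2.1]{CSMargulis} (the improved form of \cite[Lemma~5.5]{CSParadox}), and the sentence preceding the statement explicitly defers to that source. So there is no internal proof to compare against; what follows assesses your argument on its own terms.

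Your derivation of $a>0$ and $b<1$ is fine, but the displacement bound rests on a false pointwise estimate. In the Culler--Shalen convention (this is not a Beardon notation), $\lambda_{\gamma,z}$ is the conformal \emph{derivative} of $\overline{\gamma}$ on $S_\infty$ with respect to the round metric determined by $z$; the area-Jacobian is therefore $\lambda_{\gamma,z}^{\,n-1}$, which for $n=3$ is what gives $\int_{S_\infty}\lambda_{\gamma,z}^2\,dA_z=1$. But then the sharp pointwise bound is $e^{-D}\le\lambda_{\gamma,z}\le e^{D}$, hence $e^{-2D}\le\lambda_{\gamma,z}^2\le e^{2D}$, not $e^{-D}\le\lambda_{\gamma,z}^2\le e^{D}$ as you assert. (For a pure translation of length $T$ along the vertical axis in upper half-space with $z=(0,0,1)$ on the axis, one computes $\lambda_{\gamma,z}(\zeta)=e^{T}(1+|\zeta|^2)/(1+e^{2T}|\zeta|^2)$, which ranges exactly over $[e^{-T},e^{T}]$, and here $D=T$.) The two properties you want to hold simultaneously in your final paragraph --- $\int\lambda^2\,dA_z=1$ \emph{and} $\sup\lambda^2=e^D$ --- are mutually incompatible, so the ``compatibility'' you flagged as routine is exactly where the argument breaks. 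Running your scheme with the corrected bound yields $b\le e^{2D}a$ and $e^{-2D}(1-a)\le 1-b$, which, whether summed or maximized, give only $d_\gamma z\ge\frac14\log\frac{b(1-a)}{a(1-b)}$ (or $\frac12\log\max\{b/a,\,(1-a)/(1-b)\}$, which is strictly smaller than the target when $b>a$). The constant $\frac12$ is in fact sharp --- taking $\nu=A_z|_{\{\lambda\ge c\}}$ in the translation example above produces equality --- so it genuinely cannot be recovered from the supremum and infimum of $\lambda_{\gamma,z}^2$ alone; the Culler--Shalen proof exploits the full level-set structure of $\lambda_{\gamma,z}$ via a rearrangement argument, and that is the missing ingredient.
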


Finally, we merge the results of \fullref{CSMachine:measures} and \fullref{CSMachine:dispIneq}  to obtain the lower bounds for the hyperbolic displacements that we will need in \fullref{sec:theend}.

\begin{proposition}\label{CSmachine:main}
	Let $z\in\mathbb{H}^3$ and $m$ be the relevant vector in $\mathbb{R}^\Psi$.
	If $\Gamma$ is geometrically infinite, then for any $\gamma\in\Gamma_*$ and for any relation $r$ defined by $\Psi$ we have:
	\begin{equation*}
	(i)\ m\in\simp, \quad 
	(ii)\ e^{2d_{\gamma} z}\geq f_r(m)\ for\ any\ relation\ r\ with\ \gamma_r=\gamma.
	\end{equation*}
\end{proposition}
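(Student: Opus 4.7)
The plan is to read off (i) directly from parts (i) and (ii) of \fullref{CSMachine:measures}, and to derive (ii) by feeding the Borel measure $\nu_{\psi_r}$ into \fullref{CSMachine:dispIneq} with parameters $a$ and $b$ chosen so that two of the three hypotheses of the lemma are met with equality.

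For (i), part (i) of \fullref{CSMachine:measures} gives
\[
M=\sum_{\psi\in\Psi}m(\psi)=\sum_{\psi\in\Psi}\nu_\psi(S_\infty)=A_z(S_\infty)=1
\]
by part (ii). Strict positivity $m(\psi)>0$ for each $\psi$ is not spelled out in the statement of \fullref{CSMachine:measures}, but is inherent to the construction of the measures $\nu_\psi$ behind it (cf.\ \cite[Lemma 5.3]{CSParadox}): each $\nu_\psi$ is, up to the Patterson--Sullivan density, the portion of $A_z$ supported on the limits of words beginning with $\psi$, and since $\Gamma$ is purely loxodromic and free of rank at least $2$, every $J_\psi$ is infinite and its associated limit-set portion has positive area. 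Hence $m\in\simp$.

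For (ii), fix a relation $r=(\gamma_r,\psi_r,\Psi_r)$ with $\gamma_r=\gamma$, and set
\[
\nu=\nu_{\psi_r},\qquad a=m_r,\qquad b=1-M_r.
\]
Non-negativity of each $\nu_\psi$ together with \fullref{CSMachine:measures}~(i) gives $\nu\leq A_z$; by definition, $\nu(S_\infty)=a$; and \fullref{CSMachine:measures}~(iii) gives $\int_{S_\infty}\lambda_{\gamma_r,z}^2\,d\nu=b$; the last two identities hold with equality. Using (i) and the fact that $\Psi_r$ is a proper non-empty subset of $\Psi$, each of $a$ and $b$ lies strictly between $0$ and $1$, so the hypothesis of \fullref{CSMachine:dispIneq} is automatic. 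The lemma then yields
\[
d_\gamma z\geq\tfrac{1}{2}\log\tfrac{b(1-a)}{a(1-b)}=\tfrac{1}{2}\log\tfrac{(1-m_r)(1-M_r)}{m_r\,M_r},
\]
and exponentiating gives $e^{2d_\gamma z}\geq f_r(m)$.

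The only non-trivial ingredient is the strict positivity in (i), which has to be imported from the underlying Patterson--Sullivan construction rather than read off \fullref{CSMachine:measures} directly; the remaining content of both parts is a clean algebraic translation between the conclusion of \fullref{CSMachine:dispIneq} and the definition of $f_r$.
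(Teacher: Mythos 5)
Your treatment of part (ii) matches the paper's essentially verbatim: set $a=m_r$, $b=1-M_r$, check the hypotheses of \fullref{CSMachine:dispIneq}, exponentiate. That part is fine.

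The gap is in part (i). You correctly note that $M=1$ follows from parts (i) and (ii) of \fullref{CSMachine:measures}, but you then dispose of strict positivity $m(\psi)>0$ by asserting it is ``inherent to the construction'' of the $\nu_\psi$, on the grounds that each $J_\psi$ is infinite and so its limit-set piece should have positive area. This is not a proof, and it is exactly the statement that \fullref{CSMachine:measures} does \emph{not} give you: the theorem only says the $\nu_\psi$ are nonnegative Borel measures summing to $A_z$, and a priori any individual $\nu_\psi$ could be the zero measure. ``$J_\psi$ is infinite'' does not by itself force the corresponding limit-set portion to have positive conformal measure. The paper devotes the bulk of the proof to precisely this point, and it proves it not by appealing to the construction but by a self-contained cascading argument driven by relation (iii) of \fullref{CSMachine:measures}: if $m(\psi_r)=0$, then $\nu_{\psi_r}$ is the zero measure, so (iii) forces $M_r=1$, and since the coordinates are nonnegative and sum to $1$ this kills every coordinate indexed by $\Psi-\Psi_r$. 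The paper then chooses relations type by type (following \fullref{tab:1}) so that the vanishing propagates, first to the type-4 conjugate coordinates and from there to all of $\Psi$, contradicting $M=1$. To complete your proof you would need either to reproduce this combinatorial case analysis, or to supply an actual argument (not an appeal to the construction) that each limit-set piece has positive area measure.
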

\begin{proof}
	First, we shall prove that $m(\psi)\neq 0$ for all $\psi\in\Psi$.
	We do so by starting with the conjugate elements of $\Psi$ and then generalizing to the rest of the elements.
	
	Assume $m(\xigent{i_0}{t_0}{j_0}{s_0}{i_0}{-t_0})=0$.
	Let $r_0$ be the relation defined by
	$r_0: \gamma_0=\xigent{i_0}{t_0}{j_0}{-s_0}{i_0}{-t_0}$, $\psi_0=\xigent{i_0}{t_0}{j_0}{s_0}{i_0}{-t_0},$ $\Psi_0=\{\xi_{i_0}^{2t_0},\ \xigen{i_0}{t_0}{j}{2s},$ $\xigent{i_0}{t_0}{j}{s}{k}{p}\}.$
	Since $m(\psi_0)=0$, we have $M_0=\sum_{\psi\in\Psi_0}m(\psi)=1$ by \fullref{CSMachine:measures}(\textit{iii}).
	This implies that there exists a $\psi_1\in\Psi_0$ such that $m(\psi_1)\neq 0$. Take $\psi_2=\xigent{i_2}{t_2}{j_2}{s_2}{i_2}{-t_2}$, where $i_2 \neq i_0$ so that $\psi_2 \notin \Psi_0$.
	Notice that $M=\sum_{\psi\in\Psi}m(\psi)=1$ by \fullref{CSMachine:measures} (\textit{i}) and (\textit{ii}).
	Now, as both $M$ and $M_0$ are equal to $1$, we have $\sum_{\psi \in \Psi-\Psi_0}m(\psi)=0$.
	This implies that $m(\psi_2)=0$.

	Let $r_2$ be the relation defined by:
	$r_2: \gamma_2=\psi_2^{-1},$ $\psi_2=\psi_2,$ $ \Psi_2=\{\xi_{i_2}^{2t_2}, \xigen{i_2}{t_2}{j}{2s}, \xigent{i_2}{t_2}{j}{s}{k}{p}\}$
	so that $M_2=1$ and $M-M_2=0$. This gives that $m(\psi)=0$ for all $\psi \in \Psi-\Psi_2$,
	which  contradicts with the existence of $\psi_1$ as $\Psi_0 \subset \Psi-\Psi_2$.
	Hence, we have the following fact
	\begin{align}\label{type4mnot0}
		m(\xigent{i}{t}{j}{s}{i}{-t}) \neq 0.
	\end{align}
	Next, assume that $m(\xi_{i_0}^{2t_0})=0$. Take $r_0$ to be the group theoretical relation $r_0:\ \gamma_0=\xi_{i_0}^{-t_0},$ $\psi_0=\xi_{i_0}^{2t_0},$ $\Psi_0=\Psi-\{\xi_{i_0}^{2t_0},\ \xigen{i_0}{t_0}{j}{2s},\ \xigent{i_0}{t_0}{j}{s}{k}{p}\}.$
	By \fullref{CSMachine:measures}, we obtain $m(\psi)=0$ for all $\psi\in\Psi-\Psi_0$.
	Thus, $m(\xigent{i_0}{t_0}{j_0}{s_0}{i_0}{-t_0})=0$ contradicting (\ref{type4mnot0}).
	This proves that $m(\xi_{i}^{2t})\neq 0.$

Using similar arguments given above, we proceed as follows: 
assume that $m(\xigen{i_0}{t_0}{j_0}{2s_0})=0$. We consider the relation  $r_0:\ \gamma_0=\xigent{i_0}{t_0}{j_0}{-s_0}{i_0}{-t_0},$ $\psi_0=\xigen{i_0}{t_0}{j_0}{2s_0},$ $\Psi_0=\Psi-\{\xigen{i_0}{t_0}{j_0}{2s_0},\ \xigent{i_0}{t_0}{j_0}{s_0}{k}{p}\}$. Then, we derive that $m(\xigent{i_0}{t_0}{j_0}{s_0}{i_0}{-t_0})=0$ which contradicts with (\ref{type4mnot0}).
Assume that $m(\xigent{i_0}{t_0}{j_0}{s_0}{i_0}{t_0})=0$. Consider the relation                               
			$r_0:\ \gamma_0=\xi_{i_0}^{-t_0},$ $\psi_0=\xigent{i_0}{t_0}{j_0}{s_0}{l_0}{p_0},$ $\Psi_0=\Psi-\{\xigen{j_0}{s_0}{i_0}{2t_0},\ \xigent{j_0}{s_0}{i_0}{t_0}{k}{p}\}$. Then, we get  $m(\xigent{j_0}{s_0}{i_0}{t_0}{j_0}{-s_0})=0$, a contradiction by  (\ref{type4mnot0}).  Finally, assume that $m(\xigent{i_0}{t_0}{j_0}{s_0}{k_0}{p_0})=0$. We consider the relation $r_0:\ \gamma_0=\xi_{i_0}^{-t_0},$ $\psi_0=\xigent{i_0}{t_0}{j_0}{s_0}{k_0}{p_0},$ $\Psi_0=\Psi-\{\xigen{j_0}{s_0}{k_0}{2p_0},\ \xigent{j_0}{s_0}{k_0}{p_0}{k}{p}\}$. Then, we find that  $m(\xigent{i_0}{t_0}{j_0}{s_0}{i_0}{-t_0})=0$, again a contradiction by  (\ref{type4mnot0}). Therefore, we derive that 
	$m(\xigen{i}{t}{j}{2s})\neq 0,$ $m(\xigent{i}{t}{j}{s}{i}{t})\neq 0,$ $\textnormal{and}\ m(\xigent{i}{t}{j}{s}{k}{p})\neq 0.$
	We cycled over all element types in $\Psi$ showing $m\in\mathbb{R}_{++}^\Psi$.
	Since we already know that $M=1$, this proves ($i$) of the current proposition.
	
	To prove ($ii$), fix any relation $r$ with $\gamma_r=\gamma$. Take $a=\nu_{\psi_r}(S_\infty)=m(\psi_r)$ and $b=\int_{S_\infty}\lambda^2_{\gamma,z}d\nu_{\psi_r}$. Notice that $a$ and $b$ satisfy the conditions of \fullref{CSMachine:dispIneq}. Thus, we obtain
	\[d_{\gamma} z\geq \frac{1}{2}\log\left(\frac{1-a}{a}\cdot \frac{b}{1-b}\right).\]
	Moreover, if $a=m_r$ and  $b=1-\sum_{\psi\in\Psi_r}m(\psi)=1-M_r$ (by \fullref{CSMachine:measures}($iii$)), then the inequality above  turns into
	\[d_{\gamma} z\geq \frac{1}{2}\log\left( \frac{1-m_r}{m_r}\cdot \frac{1-M_r}{M_r}\right)=\frac{1}{2}\log f_r(m).\]
	Taking the exponential of the last expression yields ($ii$).
\end{proof}

The proposition above shows that we can use the displacement functions of the relations $r$ and $m$ to find lower bounds for hyperbolic displacements.
We could sharpen this bound by searching for the maximal value of $f_r(m)$ over a family of relations.
Obviously, the larger the family, the better the lower bound for $\text{max}\set{f_r(m)}{r}$. Unfortunately, calculating values for $m$ for every $z\in\mathbb{H}^3$ is not feasible.
Therefore, we look at all possible values of $m$ within the bounds set by \fullref{CSmachine:main}($i$).
As we don't know which element of $\simp$ is actually $m$, we need to calculate the infimum over all elements.
Ultimately, we shall prove in \fullref{sec:theend} that what matters is the infimum of the max function of a collection of displacement functions over $\simp$. This optimization is performed in \fullref{sec:Optimization}.


\section{The Max Functions and Classification of Relations}\label{sec:Tables}

In this section, we mainly present some tables illuminating the mechanics of the displacement functions for the relevant decompositions $\Gamma_{\mathcal{D}}$.
We also discuss how these tables are used in later sections and the objectives behind their discovery. \fullref{tab:1} below is used to count and classify elements of $\Psi$ into 5 types.
\begin{table}[H]
	\centering
	\raa{1.5}
	\begin{tabular}{ccr}
		\toprule
		& $\psi$                    & count                  \\
		\midrule
		type 1   & $\xii^{2t}$               & $2n$                   \\
		type 2   & $\xii^t\xij^{2s}$         & $4n(n-1)$              \\
		type 3   & $\xii^t\xij^{s}\xii^t$    & $4n(n-1)$              \\
		type 4   & $\xii^t\xij^{s}\xii^{-t}$ & $4n(n-1)$              \\
		type 5   & $\xii^t\xij^{s}\xik^p$    & $8n(n-1)(n-2)$         \\
		\midrule
		$|\Psi|$ & $|\{i,\ j,\ k\}|=3$       & $2n+4n(n-1)+8n(n-1)^2$ \\
		\bottomrule
	\end{tabular}
	\caption{Types of elements in $\Psi$.}
	\label{tab:1}
\end{table}

While reading this table, remember the notational relaxations that are discussed in \fullref{DefNot} and also assume $|\{i,\ j,\ k\}|=3$ at any instance.
The above classification (with the counts) will be useful in simplifying the calculations for the optimization question which is discussed in \fullref{Intro}.
However, what we really need is a set of coherent relations that will yield many symmetries of the simplex $\simp$.

Notice that for the purposes of finding a lower bound for $f(m)$ as discussed in \fullref{CSMachine}, one can work with as small or as large a set of relations as one desires.
Notice that if the set of relations is small, the resulting lower bound runs the risk of being too small as well and not be useful in any shape for future applications. On the opposite side, if the set of relations being considered is too large (or badly curated) the resulting optimization question will become too complicated to solve. The approach prefered in this paper is to work with and optimize a midsize collection of displacement functions but to choose that collection in such a way that the resulting lower bound is actually identical to that of a much larger collection. \fullref{tab:2} lists, classifies and counts all relations in the large collection $\mathcal{G}$.
\begin{table}[H]
	\centering
	\raa{1.5}
	\begin{tabular}{cccccc}
		\toprule
		& $\gamma_r$                                       & $\psi_r$                                 & $\Psi_r$                                                                               & $\{\cdot\}$                    & count          \\
		\midrule
		type 1a         & $\xi_{i_0}^{-t_0}$                               & $\xi_{i_0}^{2t_0}$                       & $\Psi-\{\xi_{i_0}^{2t_0},\ \xi_{i_0}^{t_0}\xij^{2s},\ \xi_{i_0}^{t_0}\xij^{s}\xik^p\}$ & $\Gamma_1-\{\xi_{i_0}^{t_0}\}$ & $2n$           \\
		type 1b         & $\xi_{i_0}^{t_0}\xi_{j_0}^{s_0}\xi_{i_0}^{-t_0}$ & $\xi_{i_0}^{2t_0}$                       & $\Psi-\{\xigent{i_0}{t_0}{j_0}{s_0}{i_0}{t_0}\}$                                       & $\Gamma_1$                     & $4n(n-1)$      \\
		\midrule
		type 2a         & $\xi_{i_0}^{-t_0}$                               & $\xigen{i_0}{t_0}{j_0}{2s_0}$            & $\Psi-\{\xi_{j_0}^{2s_0}\}$                                                            & $\Gamma_1$                     & $4n(n-1)$      \\
		type 2b         & $\xigent{i_0}{t_0}{j_0}{-s_0}{i_0}{-t_0}$        & $\xigen{i_0}{t_0}{j_0}{2s_0}$            & $\Psi-\{\xigen{i_0}{t_0}{j_0}{2s_0},\ \xigent{i_0}{t_0}{j_0}{s_0}{k}{p}\}$             & $\Gamma_1$                     & $4n(n-1)$      \\
		\midrule
		type 3a         & $\xi_{i_0}^{-t_0}$                               & $\xigent{i_0}{t_0}{j_0}{s_0}{i_0}{t_0}$  & $\Psi-\{\xigen{j_0}{s_0}{i_0}{2t_0},\ \xigent{j_0}{s_0}{i_0}{t_0}{k}{p}\}$             & $\Gamma_1$                     & $4n(n-1)$      \\
		type 3b         & $\xigent{i_0}{t_0}{j_0}{-s_0}{i_0}{-t_0}$        & $\xigent{i_0}{t_0}{j_0}{s_0}{i_0}{t_0}$  & $\Psi-\{\xi_{i_0}^{2t_0}\}$                                                            & $\Gamma_1$                     & $4n(n-1)$      \\
		\midrule
		type 4a         & $\xi_{i_0}^{-t_0}$                               & $\xigent{i_0}{t_0}{j_0}{s_0}{i_0}{-t_0}$ & $\Psi-\{\xigen{j_0}{s_0}{i_0}{-2t_0},\ \xigent{j_0}{s_0}{i_0}{-t_0}{k}{p}\}$           & $\Gamma_1$                     & $4n(n-1)$      \\
		type 4b         & $\xigent{i_0}{t_0}{j_0}{-s_0}{i_0}{-t_0}$        & $\xigent{i_0}{t_0}{j_0}{s_0}{i_0}{-t_0}$ & $\{\xi_{i_0}^{2t_0},\ \xigen{i_0}{t_0}{j}{2s},\ \xigent{i_0}{t_0}{j}{s}{k}{p}\}$       & $\{\xi_{i_0}^{t_0}\}$          & $4n(n-1)$      \\
		\midrule
		type 5a         & $\xi_{i_0}^{-t_0}$                               & $\xigent{i_0}{t_0}{j_0}{s_0}{k_0}{p_0}$  & $\Psi-\{\xigen{j_0}{s_0}{k_0}{2p_0},\ \xigent{j_0}{s_0}{k_0}{p_0}{k}{p}\}$             & $\Gamma_1$                     & $8n(n-1)(n-2)$ \\
		type 5b         & $\xigent{i_0}{t_0}{j_0}{-s_0}{i_0}{-t_0}$        & $\xigent{i_0}{t_0}{j_0}{s_0}{k_0}{p_0}$  & $\Psi-\{\xigen{i_0}{t_0}{k_0}{2p_0},\ \xigent{i_0}{t_0}{k_0}{p_0}{k}{p}\}$             & $\Gamma_1$                     & $8n(n-1)(n-2)$ \\
		\midrule
		$|\mathcal{G}|$ &                                                  &                                          & \multicolumn{3}{r}{$2(2n+4n(n-1)+8n(n-1)^2)+2n(2n-3)$}                                                                                   \\
		\bottomrule
	\end{tabular}
	\caption{Relations in the collection $\mathcal{G}$.}
	\label{tab:2}
\end{table}

In fact, $\mathcal{G}$ is the largest possible collection that can be worked with. This table has been prepared in a $\psi_r$ centric way, in the sense that to generate a relation, first $\psi_r$ is fixed and then a suitable $\gamma_r$ is chosen.
The calculation of $\Psi_r$ and $\{\cdot\}$ at every type is very similar to the example done in \fullref{DefNot}. With this collection at hand, we can define the function $G:\simp \longrightarrow \mathbb{R}_{++} $ as follows
\begin{align*}
	G(x) & =\max_{r\in\mathcal{G}}f_r(x).
\end{align*}
It is the infimum of this function that we will ultimately obtain in \fullref{sec:Optimization}.
The midsize collection $\mathcal{F}$ is the subset of $\mathcal{G}$ consisting of Type 1a, 2b, 3a, 4b and 5a relations.
The relevant function $F:\simp  \longrightarrow \mathbb{R}_{++} $ of this collection is defined as follows
\begin{align*}
	F(x)     & =  \max_{r\in\mathcal{F}}f_r(x).
\end{align*}
In \fullref{sec:Optimization}, we will mostly be optimizing $F$.

The choice of $\mathcal{F}$ can be explained as follows: For example, for any type 1b relation there exists a type 1a relation such that the type 1a relation's displacement function is larger at every point of the simplex.
This implies that the displacement functions for all the type 1b relations are unnecessary inside the max function.
Alas, most of the other "cancellations" aren't this straightforward and ultimately unnecessary as once the unique optimal solution for the infimum of $F$ is calculated, it is straight forward to show that it must also be the unique optimal point for the infimum of $G$. As a result, the collection $\mathcal{F}$ has been chosen largely due to the patterns observed in the $n=2$ in \cite{Yu4} and  the computer models observed by the authors. \fullref{tab:3} is listing the elements of $\mathcal{F}$ for ease of access.

\begin{table}[h]
	\centering
	\raa{1.7}
	\begin{tabular}{cclccccc}
		\toprule
		& $\psi_r$                                 & \multicolumn{5}{c}{$\Psi_r$} & count                                      \\
		\midrule
		1a                                        & $\xi_{i_0}^{2t_0}$                       &
		$\Psi-\{\xi_{i_0}^{2t_0},$                &
		$\xi_{i_0}^{t_0}\xij^{2s},$               &
		$\xigent{i_0}{t_0}{j}{s}{i_0}{t_0},$      &
		$\xigent{i_0}{t_0}{j}{s}{i_0}{-t_0},$     &
		$\xigent{i_0}{t_0}{j}{s}{k}{p}\ \}$       &
		$2n$                                                                                                                                                             \\
		
		&                                          &
		\multicolumn{1}{r}{$1$}                   &
		$2(n-1)$                                  &
		$2(n-1)$                                  &
		$2(n-1)$                                  &
		$4(n-1)(n-2)$                             &
		\\
		\midrule
		
		2b                                        & $\xigen{i_0}{t_0}{j_0}{2s_0}$            & $\Psi-\{$                    &
		$\xigen{i_0}{t_0}{j_0}{2s_0},$            &
		$\xigent{i_0}{t_0}{j_0}{s_0}{i_0}{t_0},$  &
		$\xigent{i_0}{t_0}{j_0}{s_0}{i_0}{-t_0},$ &
		$\xigent{i_0}{t_0}{j_0}{s_0}{k}{p}\ \}$   &
		$4n(n-1)$                                                                                                                                                        \\
		
		&                                          &
		\multicolumn{1}{r}{ }                     &
		$1$                                       &
		$1$                                       &
		$1$                                       &
		$2(n-2)$                                  &
		\\
		\midrule
		
		3a                                        & $\xigent{i_0}{t_0}{j_0}{s_0}{i_0}{t_0}$  & $\Psi-\{$                    &
		$\xigen{j_0}{s_0}{i_0}{2t_0},$            &
		$\xigent{j_0}{s_0}{i_0}{t_0}{j_0}{s_0},$  &
		$\xigent{j_0}{s_0}{i_0}{t_0}{j_0}{-s_0},$ &
		$\xigent{j_0}{s_0}{i_0}{t_0}{k}{p}\ \}$   &
		$4n(n-1)$                                                                                                                                                        \\
		&                                          &
		\multicolumn{1}{r}{ }                     &
		$1$                                       &
		$1$                                       &
		$1$                                       &
		$2(n-2)$                                  &
		\\
		\midrule
		4b                                        & $\xigent{i_0}{t_0}{j_0}{s_0}{i_0}{-t_0}$ &
		\multicolumn{1}{r}{$\{\xi_{i_0}^{2t_0},$} &
		$\xigen{i_0}{t_0}{j}{2s},$                &
		$\xigent{i_0}{t_0}{j}{s}{i_0}{t_0},$      &
		$\xigent{i_0}{t_0}{j}{s}{i_0}{-t_0},$     &
		$\xigent{i_0}{t_0}{j}{s}{k}{p}\ \}$       &
		$4n(n-1)$                                                                                                                                                        \\
		&                                          &
		\multicolumn{1}{r}{$1$}                   &
		$2(n-1)$                                  &
		$2(n-1)$                                  &
		$2(n-1)$                                  &
		$4(n-1)(n-2)$                             &
		\\
		\midrule
		5a                                        & $\xigent{i_0}{t_0}{j_0}{s_0}{k_0}{p_0}$  &
		$\Psi-\{$                                 &
		$\xigen{j_0}{s_0}{k_0}{2p_0}$             &
		$\xigent{j_0}{s_0}{i_0}{t_0}{j_0}{s_0},$  &
		$\xigent{j_0}{s_0}{i_0}{t_0}{j_0}{-s_0},$ &
		$\xigent{j_0}{s_0}{i_0}{t_0}{k}{p}\ \}$   &
		$\displaystyle\frac{8n!}{(n-3)!}$                                                                                                                                             \\
		&                                          &
		\multicolumn{1}{r}{ }                     &
		$1$                                       &
		$1$                                       &
		$1$                                       &
		$2(n-2)$                                  &
		\\
		\midrule
		$|\mathcal{F}|$                           &                                          &
		&                                          &                              & \multicolumn{3}{r}{$2n+4n(n-1)+8n(n-1)^2$} \\
		\bottomrule
	\end{tabular}
	\caption{Relations in the collection $\mathcal{F}$.}
	\label{tab:3}
\end{table}

In \fullref{tab:3}, we omitted the set $\{\cdot\}$ and $\gamma_r$ columns (as they are irrelevant to the calculation of displacements) and expanded the description of $\Psi_r$ as to distinguish (and count) type 3, 4 and 5 elements of $\Psi_r$ better (see \fullref{tab:1}). Also notice that $|\mathcal{F}|=|\Psi|$. 

In the rest of this section, we shall also calculate the gradients of the displacement functions defined in \fullref{DefNot}.
They will be needed in \fullref{sec:Optimization}. 
Recall that for a relation $r$, the definition of the displacement function $f_r$ of $r$ is 
\begin{equation*}
f_r(x)=\frac{(1-x_r)(1-X_r)}{x_rX_r},
\end{equation*}
 where $x_r=x(\psi_r)$ and $X_r=\sum_{\psi\in\Psi_r}x(\psi)$.
Thus, the partial derivative of $f_r$ with respect to a coordinate $\psi\in\Psi$ depends on whether the coordinate is in $\Psi_r$ and if it is equal to $\psi_r$ or not.
To further complicate the issue, we have $\psi_r\in\Psi_r$ for some displacements and $\psi_r\notin\Psi_r$ for some displacements. Therefore, we present the gradient of $f_r$ for a given relation $r$ in two cases. For relations of type 1a and 2b, we have $\psi_r\notin\Psi_r $ which implies that
\begin{eqnarray*}
 \frac{d f_r}{d \psi} &=&\begin{cases}
			\displaystyle\frac{1-x_r}{x_r}\cdot\frac{-1}{X_r^2} & \text{ if } \psi\in\Psi_r                  \\
			\displaystyle\frac{-1}{x_r^2}\cdot\frac{1-X_r}{X_r} & \text{ if } \psi=\psi_r                    \\
			0                                      & \text{ if } \psi\notin\Psi_r\cup\{\psi_r\}.
		\end{cases}
\end{eqnarray*}
Using the partial derivatives above, we see that  
\begin{equation}\label{grad1}
\nabla f_r(x)=-\frac{1-x_r}{x_rX_r^2}\cdot\sum_{\psi\in\Psi_r}e_\psi - \frac{1-X_r}{x_r^2X_r}\cdot e_{\psi_r}.
\end{equation}
For relations of type 3a, 4b and 5a, we have $\psi_r\in\Psi_r $ so that 
\begin{eqnarray*}
 \frac{d f_r}{d \psi} &=&\begin{cases}
			\displaystyle\frac{1-x_r}{x_r}\cdot\frac{-1}{X_r^2}                                        & \text{ if } \psi\in\Psi_r-\{\psi_r\}       \\
			\displaystyle\frac{-1}{x_r^2}\cdot\frac{1-X_r}{X_r}+\frac{1-x_r}{x_r}\cdot\frac{-1}{X_r^2} & \text{ if } \psi=\psi_r                    \\
			0                                                                             & \text{ if } \psi\notin\Psi_r\cup\{\psi_r\}.
		\end{cases}
\end{eqnarray*}
As a result, we calculate the gradients as 
\begin{equation}\label{grad2}
 \nabla f_r(x)=-\frac{1-x_r}{x_rX_r^2}\cdot\sum_{\psi\in\Psi_r}e_\psi - \left(\frac{1-X_r}{x_r^2X_r}+\frac{1-x_r}{x_rX_r^2}\right)\cdot e_{\psi_r}.
\end{equation}
In both (\ref{grad1}) and (\ref{grad2}),  the expression $(e_\psi)_\Psi$ is the canonical basis for $\mathbb{R}^\Psi$.


\section{Optimizing the Infimum of the Max Function} \label{sec:Optimization}
The purpose of this section is to compute $\inf_{x\in\simp}(G(x))$. But for this purpose, we will calculate $\inf_{x\in\simp}(F(x))$.
This calculation can be summarized in three phases.
The first phase is to show the attainment of optimality (for both $G$ and $F$). The second phase is to prove the uniqueness of optimality. The final phase is to calculate the point of optimality using the Karush-Kuhn-Tucker theorem.
We would like to point out that the problem with the infimum is not that of existence, as the image is a bounded below subset of $\mathbb{R}$ that part is obvious, but attainment in $\simp$.
In fact, from here on we shall denote the infimums with $\alpha^*$ and $\beta^*$; in other words,
\begin{equation*}
	\alpha^*=\inf_{x\in\simp}(G(x))\ \textnormal{ and }\ \beta^*=\inf_{x\in\simp}(F(x)).
\end{equation*}

\begin{lemma}
	There exists $x^*\in\simp$ such that $G(x^*)=\alpha^*$.
\end{lemma}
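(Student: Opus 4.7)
I would run the classical compactness-plus-coercivity argument. Start with a minimizing sequence $(x^{(k)})_{k \geq 1} \subset \simp$ with $G(x^{(k)}) \to \alpha^*$. Since the closed standard simplex $\overline{\simp} \subset \mathbb{R}^\Psi$ is compact, pass to a subsequence converging to some $x^* \in \overline{\simp}$. Because $G$ is continuous on $\simp$ as the maximum of the finite collection $\{f_r\}_{r \in \G}$ of continuous rational functions, once $x^* \in \simp$ is established (i.e.\ every coordinate strictly positive), continuity yields $G(x^*) = \lim_k G(x^{(k)}) = \alpha^*$.

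The key step is to rule out $x^* \in \partial\simp$. Suppose instead that $S = \{\psi \in \Psi : x^*_\psi = 0\}$ is non-empty, and set $T = \Psi - S$, which is also non-empty since $\sum_\psi x^*_\psi = 1$. The central combinatorial claim is: \emph{there exists $r \in \G$ with $\psi_r \in S$ and $(\Psi - \Psi_r) \cap T \neq \emptyset$}. Given such $r$ and any $\psi_1 \in (\Psi - \Psi_r) \cap T$, the factor $(1 - x^{(k)}_r)/x^{(k)}_r$ in $f_r(x^{(k)})$ tends to $+\infty$ because $x^{(k)}_r \to x^*_{\psi_r} = 0$, while the factor $(1 - X^{(k)}_r)/X^{(k)}_r$ stays bounded below by $x^*_{\psi_1} > 0$ since $1 - X^{(k)}_r \geq x^{(k)}_{\psi_1}$ and $X^{(k)}_r \leq 1$. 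Hence $f_r(x^{(k)}) \to +\infty$, contradicting $f_r(x^{(k)}) \leq G(x^{(k)}) \to \alpha^* < \infty$ (the latter finiteness follows from evaluating $G$ at any fixed interior point).

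The combinatorial claim is the main obstacle, and I would verify it by a finite case analysis on the lowest element-type appearing in $S$, reading off \fullref{tab:2}. If the lowest type in $S$ is $2$, $3$, or $5$, then the relations 2a, 3b, or 5a already produce $(\Psi - \Psi_r)$ sets containing a type 1 or type 2 element, which must lie in $T$ by the minimality of the chosen type. The nontrivial case is a type 1 element $\xi_{i_0}^{2t_0} \in S$: type 1a gives $(\Psi - \Psi_r)$ equal to the set of all elements starting with $\xi_{i_0}^{t_0}$, and either this meets $T$ directly, or else all such elements lie in $S$; in the latter subcase the type 4 element $\xigent{i_0}{t_0}{j_0}{s_0}{i_0}{-t_0}$ is in $S$, and applying the type 4b relation to it, whose $(\Psi - \Psi_r)$ is precisely the complementary set of elements not starting with $\xi_{i_0}^{t_0}$, forces a non-empty intersection with $T$ (otherwise $T \subseteq S$, contradicting $T \cap S = \emptyset$). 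An isolated type 4 element in $S$ is handled identically via 4b. This type 1 / type 4 interplay, which hinges on the complementary structure of the relations 1a and 4b exposed by \fullref{tab:2}, is the step I expect to be the main obstacle; once it is in hand, the coercivity argument above completes the proof.
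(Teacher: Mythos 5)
Your proposal is correct and follows essentially the same compactness-and-coercivity strategy as the paper: extract a convergent minimizing subsequence, and rule out a boundary limit point by producing a relation $r$ with $\psi_r$ in the zero set whose complement $\Psi-\Psi_r$ meets the positive set, which forces $f_r(x^{(k)})\to\infty$ and contradicts $f_r\le G\to\alpha^*<\infty$. The only cosmetic difference is in the combinatorial case analysis: you organize by the lowest element-type in the zero set and invoke relations 2a and 3b, whereas the paper propagates $\Psi-\Psi_r\subseteq K$ until $K=\Psi$ using only relations from $\F$ (1a, 2b, 3a, 4b, 5a), a choice that lets it reuse the identical proof verbatim for \fullref{Existence}.
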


\begin{proof}
	As $\alpha^*$ is the infimum of the set $G(\simp)$, there must be a sequence $(x^n)_n$ in $\simp$ such that $G(x^n)\rightarrow\alpha^*$.
	Moreover, $(x^n)_n$ must have a convergent subsequence since the closure of $\simp$ is a bounded subset of $\mathbb{R}^\Psi$. Without loss of generality, assume $x^n\rightarrow \bar{x}$ for some $\bar{x}\in cl(\simp)$.
	Notice that $cl(\simp)=\{x\in\mathbb{R}^\Psi_+\ |\ X=1\}$.
	The fact that the max function of continuous functions is also continuous is a simple exercise (\cite{convex} Exercise 2.3.1).
	Our aim is to prove $\bar{x}\notin cl(\simp)-\simp$. We shall do so by assuming the opposite and obtaining a contradiction.
	
	Let $K=\set{\psi\in\Psi}{\bar{x}_\psi=0}$.
	The assumption that $\bar{x}\in cl(\simp)-\simp$ implies $K\neq\emptyset$.
	Now, if there exists a relation $r$ such that $\psi_r\in K$ and $\overline{X}_r\in [0,1)$, then $x^n_r\rightarrow \bar{x}_r=0$ and $({1-X^n_r})/{X^n_r}$ either converges or diverges to $\infty$,
	which shows that $f_r(x^n)=(1-x^n_r)/x^n_r\cdot (1-X^n_r)/X^n_r\rightarrow \infty$.
	However, $f_r(x)\leq G(x)$ for any $x$. Thus, we also have $\lim f_r(x^n)\leq \lim G(x^n)=\alpha^*$, contradicting the existence of $\alpha^*$.
	
	The above paragraph shows that for all $r\in\mathcal{G}$ with $\psi_r\in K$, we have $\overline{X}_r=1$.
	However, we know that $\overline{X}_r+\sum_{\psi\notin\Psi_r}\bar{x}(\psi)=\overline{X}=1$ proving $\bar{x}(\psi)=0$ for all $\psi\in\Psi-\Psi_r$ in any such relation.
	In summary, what we showed is that 
	\begin{align}
		\label{Keq:1}
		\psi_r\in K \textnormal{ implies } \Psi-\Psi_r \subseteq K
	\end{align}
	for a relation $r\in\mathcal{G}$. The contradiction that we seek will soon come in the form of $K=\Psi$ as that would imply $\overline{X}=0$, contradicting the fact that $\bar{x}\in cl(\simp)$.
We already know $K\neq \emptyset$ by our assumption. We shall start with a random element of $K$. We will  cycle over all possible types in \fullref{tab:1} and use (\ref{Keq:1}) together with \fullref{tab:2} to show that $K=\Psi$.
	
	If $\xi_{i_0}^{2t_0}\in K$, the relevant type 1a relation $r_0$ will satisfy $\psi_{r_0}\in K$. By (\ref{Keq:1}), we get $\Psi-\Psi_{r_0}\subseteq K$, implying $\{\xigent{i_0}{t_0}{j}{s}{i_0}{-t_0}\}\subseteq K$.
	Repeating the same process for the type 4b relation $r_j$ with $\psi_{r_j}=\xigent{i_0}{t_0}{j}{s}{i_0}{-t_0}$, we get $\{\xi_i^{2t}\}-\{\xi_{i_0}^{2t_0}\}\subseteq K$.
	Since we already know that $\xi_{i_0}^{2t_0}\in K$, we actually have $\{\xi_i^{2t}\}\subseteq K$. The arguments so far in this paragraph show that for any type 1a relation $r$ we obtain $\psi_r\in K$.
	Merging all this information together, we get $\Psi=\bigcup_{i=1}^{n}(\Psi-\Psi_{r_i})\subseteq K$, proving what we wanted. We summarize this result below:
	\begin{equation}
		\label{Keq:2}
		\text{if there exists a type $1$ element in  $K$, then}\ \Psi=K.
	\end{equation}
	
	If $\xigen{i_0}{t_0}{j_0}{2s_0}\in K$, the relevant type 2b relation $r_0$ will satisfy $\psi_{r_0}\in K$ and so by (\ref{Keq:1}) we get $\Psi-\Psi_{r_0}\subseteq K$, implying $\{\xigent{i_0}{t_0}{j_0}{s_0}{i_0}{-t_0}\}\subseteq K$.
	Repeating the same process for the type 4b relation $r_1$ with $\psi_{r_1}=\xigent{i_0}{t_0}{j_0}{s_0}{i_0}{-t_0}$, we get $\{\xi_i^{2t}\}-\{\xi_{i_0}^{2t_0}\}\subseteq K$. This shows that $K$ has a type 1 element and $K=\Psi$ by (\ref{Keq:2}).
	We summarize this result below:
	\begin{equation}
		\label{Keq:3}
		\text{if there exists a type $2$ element in  $K$, then}\ \Psi=K.
	\end{equation}
	
If $K$ has an element of the other types, we have the same conclusion $\Psi=K$. More specifically, we see that
	 if $K$ has a type $3$ element and the relevant type $3a$ relation together with (\ref{Keq:3}), then $K=\Psi$. The assumption that
	$K$ has a type $4$ element and the relevant type 4b relation together with (\ref{Keq:2}) implies $K=\Psi$. If 
		$K$ has a type 5 element and the relevant type 5a relation together with (\ref{Keq:3}), then $K=\Psi$.
		This gives that as long as $K$ has an element no matter what type, $K=\Psi$.
	\end{proof}

Notice that in the proof above, we only used the relations in $\mathcal{F}$. Therefore, the following lemma has an identical proof.
\begin{lemma}
	\label{Existence}
	There exists $x^*\in\simp$ such that $F(x^*)=\beta^*$.
\end{lemma}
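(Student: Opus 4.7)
The plan is to mimic the proof given for $G$ almost verbatim, observing that the cycling argument there already used only relations from $\mathcal{F}$. First, take a sequence $(x^n)_n$ in $\simp$ with $F(x^n)\to\beta^*$. Since $\mathrm{cl}(\simp)=\{x\in\mathbb{R}^\Psi_+\ |\ X=1\}$ is compact in $\mathbb{R}^\Psi$ and $F$ is continuous as a max of finitely many continuous functions, we may pass to a subsequence with $x^n\to\bar{x}\in\mathrm{cl}(\simp)$. The goal is to rule out $\bar{x}\in\mathrm{cl}(\simp)-\simp$.

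Assume for contradiction that the set $K=\{\psi\in\Psi:\bar{x}_\psi=0\}$ is nonempty. For any $r\in\mathcal{F}$ with $\psi_r\in K$, the same divergence argument used in the proof of the previous lemma applies: if $\overline{X}_r<1$ we would have $f_r(x^n)\to\infty$, contradicting $f_r(x^n)\leq F(x^n)\to\beta^*$. Hence $\overline{X}_r=1$, and combining with $\overline{X}=1$ forces
\begin{equation*}
\psi_r\in K\ \text{ implies }\ \Psi-\Psi_r\subseteq K\quad\text{for every }r\in\mathcal{F}.
\end{equation*}

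To reach $K=\Psi$, I would now replay the type-by-type cycling exactly as before, but inspecting \fullref{tab:3} rather than \fullref{tab:2}. The key point is that the earlier argument only invoked type 1a, 2b, 3a, 4b, 5a relations, all of which belong to $\mathcal{F}$. In particular, starting from any $\psi\in K$: a type 1 element forces (via the associated type 1a relation) that $\{\xigent{i_0}{t_0}{j}{s}{i_0}{-t_0}\}\subseteq K$, and then the type 4b relations based at each such conjugate pull in all remaining $\xi_i^{2t}$, so (Keq:2) holds inside $\mathcal{F}$; a type 2, 3, 4 or 5 element in $K$ triggers, via the corresponding 2b, 3a, 4b or 5a relation, the inclusion of a type 1 or type 2 element in $K$, reducing to the previous case by (Keq:2) or (Keq:3). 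Thus $K=\Psi$, which yields $\overline{X}=0$ contradicting $\bar{x}\in\mathrm{cl}(\simp)$.

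The only place where something could go wrong is the cycling step, so the main thing to verify carefully is that each of the five closure implications in the proof of the previous lemma is realized by a relation actually listed in \fullref{tab:3}. An inspection of the $\Psi_r$ columns shows this to be the case, since for each element type in $\Psi$ the relation chosen in the original argument is precisely the type (1a, 2b, 3a, 4b or 5a) that survives in $\mathcal{F}$. Hence $\bar{x}\in\simp$ and we set $x^*=\bar{x}$, giving $F(x^*)=\beta^*$.
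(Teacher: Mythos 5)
Your proposal is correct and follows the paper's approach exactly: the paper disposes of this lemma with the single remark that the preceding proof for $G$ used only relations of types 1a, 2b, 3a, 4b and 5a, all of which lie in $\mathcal{F}$, so the same argument carries over verbatim. Your type-by-type verification that the cycling implications are realized by relations in \fullref{tab:3} is precisely the observation the paper is gesturing at, just spelled out in more detail.
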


The next lemma gives us a rough location for $\beta^*$ and $\alpha^*$ which proves very useful for the proofs in the second phase.

\begin{lemma}\label{AlphaIsBound}
	The infimums of $F(x)$ and $G(x)$ on $\simp$, $\beta^*$ and $\alpha^*$ are greater than $1$ and less than or equal to $\alpha$. Moreover, $\beta^*\leq \alpha^*$.
\end{lemma}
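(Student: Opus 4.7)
The plan is to organize the proof into three parts mirroring the three assertions. The inequality $\beta^* \leq \alpha^*$ is immediate from $\mathcal{F} \subseteq \mathcal{G}$: this gives $F(x) \leq G(x)$ pointwise on $\simp$, and taking infimums yields the claim.

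For the strict lower bound, I will show $F(x) > 1$ (and hence $G(x) > 1$) for every $x \in \simp$. A direct manipulation of $f_r(x)=(1-x_r)(1-X_r)/(x_r X_r)$ gives the equivalence $f_r(x) > 1 \Leftrightarrow x_r + X_r < 1$. Fix any type $1a$ relation $r_0$. From \fullref{tab:2} one has $\psi_{r_0} \notin \Psi_{r_0}$, and the complement $\Psi - (\{\psi_{r_0}\} \cup \Psi_{r_0})$ is nonempty for $n \geq 2$ since it contains, for instance, $\xi_{i_0}^{t_0}\xi_{j}^{2s}$ for any $j \neq i_0$. Because every coordinate of $x \in \simp$ is strictly positive and $X=1$, this forces $x_{r_0} + X_{r_0} < 1$, so $F(x) \geq f_{r_0}(x) > 1$. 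Combining with the attainment of both infimums at interior points of $\simp$ (from \fullref{Existence} and the analogous lemma for $G$) yields $\beta^* > 1$ and $\alpha^* > 1$.

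For the upper bound $\alpha^* \leq \alpha$ (which together with $\beta^* \leq \alpha^*$ also yields $\beta^* \leq \alpha$), I plan to exhibit a specific $x^0 \in \simp$ with $G(x^0) \leq \alpha$, constructed to be invariant under the natural symmetries of $\Gamma_{\mathcal{D}}$. Assign $x^0(\psi)$ a value depending only on the type of $\psi$ in \fullref{tab:1}: $a_1$ on type $1$, $a_4$ on type $4$, and a common value $a$ on types $2$, $3$, and $5$. At such a symmetric point the formulas from \fullref{sec:Tables} give $f_{2b}(x^0)=f_{3a}(x^0)=f_{5a}(x^0)$, while $f_{1a}(x^0)$ and $f_{4b}(x^0)$ depend on $a_1, a_4, a$ through the auxiliary quantities $B_1 = a_1 + 2(n-1)a_4 + 4(n-1)^2 a$ and $B_2 = a_4 + 2(n-1)a$. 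Choose $a_1, a_4, a$ so that $f_{1a}(x^0) = f_{4b}(x^0) = f_{2b}(x^0) = \alpha$, together with the normalization $X=1$. Elimination of the three parameters from this $4\times 4$ system yields a degree-four polynomial equation in $\alpha$; one checks by direct calculation that it is precisely $\mathcal{P}(\alpha)=0$. The estimates on the roots of $\mathcal{P}$ in \fullref{Proots} then ensure the resulting $a_1, a_4, a$ are strictly positive, placing $x^0$ in $\simp$. Finally, verify $f_r(x^0) \leq \alpha$ for the five remaining relation classes $1b$, $2a$, $3b$, $4a$, $5b$ in $\mathcal{G}\setminus\mathcal{F}$; by symmetry each reduces to a single rational inequality in $\alpha, a_1, a_4, a$ that is settled using the equations already solved.

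The main obstacle is this upper-bound step: carrying out the elimination carefully enough to recover exactly the polynomial $\mathcal{P}$ (rather than some cousin of it) and then executing the five dominance checks for the classes in $\mathcal{G}\setminus\mathcal{F}$. The remaining assertions are essentially formal, resting only on the identity $f_r(x) > 1 \Leftrightarrow x_r + X_r < 1$ together with the strict positivity of coordinates in $\simp$ and the existence of minimizers proved just above.
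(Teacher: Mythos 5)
Your proposal follows the paper's argument essentially line by line: $\beta^* \leq \alpha^*$ from $\mathcal{F}\subseteq\mathcal{G}$; the lower bound from the identity $f_r(x)>1 \Leftrightarrow x_r + X_r < 1$ applied to a type $1a$ relation together with attainment of the infimum; and the upper bound from exhibiting a type-symmetric point $y\in\simp$ (the paper states its coordinates explicitly, you describe solving the same three-equation system to obtain them) and checking $f_r(y)\leq\alpha$ over $\mathcal{G}$. The only quibble is a small bookkeeping slip in where $B_2$ enters (it governs $f_{2b},f_{3a},f_{5a}$, not $f_{4b}$), but this does not affect the validity of the plan.
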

\begin{proof}
	First of all, we know that $1<\alpha$ by \fullref{Proots}. Therefore, at the very least the lemma makes sense.
	Also, since $\F\subset \G$, we have $F(x)\leq G(x)$ for any $x\in \simp$.
	This proves $\beta^*\leq \alpha^*$.
	
	To show that $1<\beta^*$, let $r$ be any type 1a relation.
	As $\psi_r\notin\Psi_r$ we have $1=X>x_r+X_r$ for all $x\in\simp$.
	Dividing both sides of this inequality with $x_rX_r$, we obtain:
	\[\frac{1}{x_rX_r}>\frac{1}{x_r}+\frac{1}{X_r}\ \textnormal{ or }  \frac{1}{x_rX_r}-\frac{1}{x_r}-\frac{1}{X_r}>0.\]
	Moreover, we have
	\begin{equation*}f_r(x)=\frac{1-x_r}{x_r}\cdot \frac{1-X_r}{X_r}=\frac{1}{x_rX_r}-\frac{1}{x_r}-\frac{1}{X_r}+1,\end{equation*}
	which must be greater than 1 by the first inequality.
	Since $F(x)\geq f_r(x)$ for any relation (and specifically any type 1a relation) in $\mathcal{F}$, we see that $\beta^*=F(x^*)\geq f_r(x^*)>1$ for $x^*$ in \fullref{Existence}.
	To show that $\alpha^*\leq \alpha$, we shall need a special element $y\in\simp$. Define
	\[
	y(\psi) =
	\begin{cases}
		\displaystyle\frac{1}{(2n-1)\alpha+1}                                                  & \text{if $\psi$ is type 1},     \\
		\displaystyle\frac{(2n-1)(\alpha-1)}{(4n^2-4n-1)(2n-1)\alpha^2+(4n^2-2)\alpha -(2n-1)} & \text{if $\psi$ is type 2,3,5,} \\
		\displaystyle\frac{(2n-1)}{(2n-1)+\alpha}                                              & \text{if $\psi$ is type 4}.
	\end{cases}
	\]
	All the coordinates of $y$ are obviously greater than $0$. It is straight forward to show that $Y=1$ by using \fullref{tab:1} and the counts of element types therein.
	The resulting expression can easily be simplified using $\Pl(\alpha)=0$.
	Similarly, one can also show that $f_r(y)=\alpha$ for all $r\in\F$ and $f_r(y)\leq\alpha$ for all $r\in\G$ by using \fullref{tab:3} and the counts of element types within every $\Psi_r$. Therefore $\alpha^*=\inf(G(x))\leq G(y)=\alpha$. In summary, we have $1< \beta^*\leq \alpha^* \leq \alpha$, completing the proof.
\end{proof}

The previous lemma marks the end of phase one.
The vector $y$ given in the proof is in fact the unique optimization point for our problem. Proving this statement takes most of the technical work in this paper.
What we shall do next is to prove the uniqueness of the optimal point using convexity arguments via a bank of lemmas.

\begin{lemma}\label{Some4bAttains}
	For all $x^*\in\simp$ such that $F(x^*)=\beta^*$, there exists a type 4b relation $r_0$ so that $f_{r_0}(x^*)=\beta^*$.
\end{lemma}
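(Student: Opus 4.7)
The plan is proof by contradiction. I would assume there exists $x^*\in\simp$ with $F(x^*)=\beta^*$ but $f_r(x^*)<\beta^*$ for every type 4b relation $r\in\F$, and then construct an explicit tangent vector $u$ to $\simp$ along which every non-4b relation strictly decreases at $x^*$ while every 4b relation strictly increases. A first-order Taylor expansion combined with continuity of all the displacement functions would then produce an $\varepsilon>0$ with $F(x^*+\varepsilon u)<\beta^*$, contradicting the definition of $\beta^*$ and \fullref{Existence}.

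The direction I would use is
\[
u(\psi)=\begin{cases}2(n-1) & \text{if }\psi\text{ is of type 1},\\ -1 & \text{if }\psi\text{ is of type 4},\\ 0 & \text{if }\psi\text{ is of type 2, 3 or 5}.\end{cases}
\]
Using the counts in \fullref{tab:1}, one checks $\sum_{\psi\in\Psi}u(\psi)=2n\cdot 2(n-1)-4n(n-1)=0$, so $u$ is tangent to $\simp$ and $x^*+\varepsilon u\in\simp$ for all sufficiently small $\varepsilon>0$.

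The core step is the computation of $\nabla f_r(x^*)\cdot u$ via (\ref{grad1}) and (\ref{grad2}), combined with the $\Psi_r$ rows of \fullref{tab:3}. For a type 1a relation, $\psi_r$ is of type 1 and $\Psi_r$ contains the remaining $2n-1$ type 1 elements together with $2(n-1)(2n-1)$ of the type 4 elements, so $u(\psi_r)=2(n-1)$ and $\sum_{\psi\in\Psi_r}u(\psi)=0$, and (\ref{grad1}) gives $\nabla f_r(x^*)\cdot u=-\tfrac{2(n-1)(1-X_r)}{x_r^2 X_r}<0$. For types 2b, 3a and 5a, $\psi_r$ is of neither type 1 nor type 4 and $\Psi_r$ contains all $2n$ type 1 elements and all but one of the type 4 elements, so $u(\psi_r)=0$ and $\sum_{\psi\in\Psi_r}u(\psi)=1$; both (\ref{grad1}) and (\ref{grad2}) then collapse to $\nabla f_r(x^*)\cdot u=-\tfrac{1-x_r}{x_r X_r^2}<0$. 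Finally, for a type 4b relation, $\psi_r$ is of type 4 and $\Psi_r$ contains exactly one type 1 element and $2(n-1)$ type 4 elements, so $u(\psi_r)=-1$ and $\sum_{\psi\in\Psi_r}u(\psi)=0$, and (\ref{grad2}) yields the strictly positive value $\tfrac{1-X_r}{x_r^2 X_r}+\tfrac{1-x_r}{x_r X_r^2}$.

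Once these directional derivatives are in hand, the contradiction is immediate: the finitely many displacement functions active at $x^*$ are all non-4b by assumption, so they each strictly decrease along $u$, while the non-active ones (including every type 4b, which increases) stay strictly below $\beta^*$ for small $\varepsilon>0$ by continuity; hence $F(x^*+\varepsilon u)<\beta^*$. The main obstacle, and really the only nontrivial step, is the combinatorial bookkeeping needed to confirm that the coefficients $2(n-1)$ and $-1$ produce precisely the values $(2(n-1),0)$, $(0,1)$, $(-1,0)$ for the pair $(u(\psi_r),\sum_{\psi\in\Psi_r}u(\psi))$ across the three cases (type 1a; types 2b, 3a, 5a; type 4b). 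Once \fullref{tab:3} is trusted, these in turn reduce to the single identity $2n\cdot 2(n-1)=4n(n-1)$ from \fullref{tab:1}, which is precisely the miracle that allows a single tangent direction to be a direction of descent for every non-4b displacement at once.
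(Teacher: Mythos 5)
Your proof is correct and is organized around the same strategy as the paper's own proof: exhibit a tangent direction $u$ to $\simp$ along which every non-type-4b displacement function strictly decreases at $x^*$, then argue by continuity for the (assumed inactive) type 4b functions that $F(x^*+\varepsilon u)<\beta^*$ for small $\varepsilon>0$, contradicting optimality. The genuine difference is the choice of descent direction. The paper takes $u(\psi)=1$ for every $\psi$ not of type 4 and $u(\psi)=-\bigl(1+4(n-1)^2\bigr)/\bigl(2(n-1)\bigr)$ for $\psi$ of type 4, so that $u$ is supported on every coordinate and the cancellations in $\sum_{\psi\in\Psi_r}u(\psi)$ involve all five type counts. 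Your vector is supported only on type 1 and type 4 coordinates with values $2(n-1)$ and $-1$, which collapses every count check to the single identity $2n\cdot 2(n-1)=4n(n-1)$ and makes the sign computations visibly trivial: $(u(\psi_r),\,\sum_{\Psi_r}u)=(2(n-1),0)$, $(0,1)$, $(-1,0)$ for the three cases. Both directions are admissible; yours is cleaner to verify against \fullref{tab:3}, and the vanishing of $u$ on type 2, 3, 5 also sidesteps any care about whether \eqref{grad1} or \eqref{grad2} applies, since the difference term is multiplied by $u(\psi_r)=0$ there. One small remark: computing that type 4b functions strictly increase along $u$ is correct but, as you yourself note, not needed for the contradiction --- since they are by assumption strictly below $\beta^*$ at $x^*$, continuity alone keeps them below $\beta^*$ for small $\varepsilon$, which is exactly how the paper closes the argument.
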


\begin{proof}
	If we consider $\simp$ as a submanifold of $\mathbb{R}^\Psi$, the tangent space $T_x\simp$ at any point $x\in\simp$ is the set of vectors whose coordinates sum to $0$.
	Moreover, every displacement function is smooth on an open set containing $\simp$ which implies that the directional derivative for $\vec{u}\in T_x\simp$ will be $\nabla f(x)\cdot \vec{u}$.
	
	Our plan of attack is to find a common direction of decrease for type 1a, 2b, 3a and 5a relations, which will give us a very nice contradiction if the infimum was not attained by any type 4b relation at an optimal point. As the common direction of decrease, consider the vector $\vec{u}$ described below.
	\begin{equation*}
		u(\psi)=
		\begin{cases}
			\ \ \ \ \ \ 1              & \ \ 	\text{if $\psi$ is not type 4,} \\
			\displaystyle-\frac{1+4(n-1)^2}{2(n-1)} & \ \ 	\text{if $\psi$ is type 4}.
		\end{cases}
	\end{equation*}
	It is easy to check that the sum of coordinates of $\vec{u}$ is $0$ with the help of \fullref{tab:1}, in turn, this shows that $\vec{u}\in T_x\simp$.
	Moreover, we present the calculations below using (\ref{grad1}),  (\ref{grad2}) and \fullref{tab:3} proving that directional derivatives of type 1a, 2b, 3a and 5a relation displacements are all negative in the direction $\vec{u}$.
		For $r$ of type 1a, we see that
		\begin{align*}
			\nabla f_r(x)\cdot \vec{u} & =-\frac{1-X_r}{x_r^2X_r}\cdot u_{\psi_r}-\frac{1-x_r}{x_rX_r^2}\sum_{\psi\in\Psi_r}u_\psi                                                                       \\
			& =-\frac{1-X_r}{x_r^2X_r}\cdot 1-\frac{1-x_r}{x_rX_r^2}\left(\sum_{\psi\in\Psi_r-\text{type 4}}u_\psi+\sum_{\psi\in\Psi_r\cap\text{type 4}}u_\psi\right)         \\
			& =-\frac{1-X_r}{x_r^2X_r}-\frac{1-x_r}{x_rX_r^2}\left(\sum_{\psi\in\Psi_r-\text{type 4}}1+\sum_{\psi\in\Psi_r\cap\text{type 4}}-\frac{1+4(n-1)^2}{2(n-1)}\right) \\
			& =-\frac{1-X_r}{x_r^2X_r}-\frac{1-x_r}{x_rX_r^2}\left( (2n-1)(1+4(n-1)^2)-\frac{1+4(n-1)^2}{2(n-1)}\cdot2(n-1)(2n-1)\right)                                      \\
			& =-\frac{1-X_r}{x_r^2X_r}                                                                                                                                        \\
			& <0.
		\end{align*}
		For the last inequality, recall that $x_r,\ X_r \in (0,1)$.
		 For type 2b, 3a and 5a, we find that
		\begin{align*}
			\nabla f_r(x)\cdot \vec{u} & =-\frac{1-X_r}{x_r^2X_r}-\frac{1-x_r}{x_rX_r^2}\cdot \left(1-\frac{1}{2(n-1)}\right)<0.
		\end{align*}

	Assume that there exists an optimal point $x^*$ such that $f_r(x^*)<\beta^*$ for all $r$ of type 4b.
	Let $\varepsilon>0$ be small enough so that $x^*+\varepsilon \vec{u}\in \simp$ and $f_r(x^*+\varepsilon \vec{u})<\beta^*$ for all $r$ of type 4b.
	Such a choice is possible by the continuity of displacement functions.
	Under these conditions, we have  $F(x^*+\varepsilon \vec{u})<\beta^*$ as $f_r(x^*+\varepsilon \vec{u})<f_r(x^*)\leq F(x^*)=\beta$ for all $r$ of type 1a, 2b, 3a, 5a and $f_r(x^*+\varepsilon \vec{u})<\beta$ for all $r$ of type 4b, contradicting the minimality of $\beta^*$.
\end{proof}

Unfortunately, the displacement functions are not convex on the entire simplex. Therefore, neither are the max functions.
Any argument of uniqueness of the optimal solution must first be defended on a local level, displacement by displacement.
Fortunately, each displacement function is strictly convex on a relatively large set.

\begin{lemma}
	For $r\in \F$ define $C_r=\set{x\in \simp}{x_r+X_r-x_rX_r<3/4}$.
	The displacement function $f_r$ is strictly convex on $C_r$.
\end{lemma}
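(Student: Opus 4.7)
The plan is to reduce the problem to two variables and compute a Hessian. Writing $u = x_r$ and $v = X_r$, both of which are linear functionals on $\mathbb{R}^\Psi$, the displacement function factors through the plane as $f_r(x) = g(u,v)$ with
\[
    g(u,v) = \frac{(1-u)(1-v)}{uv} = \frac{1}{uv} - \frac{1}{u} - \frac{1}{v} + 1.
\]
The defining inequality of $C_r$ becomes the condition that $(u,v)$ lies in the open planar region $D_r = \{(u,v) \in (0,1)^2 : u+v-uv < 3/4\}$, equivalently $(1-u)(1-v) > 1/4$. Since strict convexity is preserved under precomposition with linear maps (up to the kernel of that map), it suffices to prove strict convexity of $g$ on $D_r$ and to verify that $D_r$, hence $C_r$, is convex.

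Next I would grind out the Hessian of $g$ directly from the expansion above:
\[
    g_{uu} = \frac{2(1-v)}{u^3 v}, \qquad g_{vv} = \frac{2(1-u)}{u v^3}, \qquad g_{uv} = \frac{1}{u^2 v^2}.
\]
Both diagonal entries are positive on $(0,1)^2$, and the determinant simplifies cleanly to
\[
    g_{uu}\, g_{vv} - g_{uv}^2 = \frac{4(1-u)(1-v) - 1}{u^4 v^4}.
\]
The numerator is strictly positive exactly on the region $(1-u)(1-v) > 1/4$, i.e.\ exactly on $D_r$. Therefore the Hessian of $g$ is positive definite throughout $D_r$, and $g$ is strictly convex there.

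For the last step, $D_r$ itself is convex because $\log(1-u)+\log(1-v)$ is a sum of two concave functions on $(0,1)^2$, hence concave, so its strict superlevel set at $\log(1/4)$, namely $D_r$, is convex. Pulling back by the linear map $x \mapsto (x_r, X_r)$ yields convexity of $C_r \subset \simp$, and the strict convexity of $g$ on $D_r$ transfers to strict convexity of $f_r$ along every segment in $C_r$ on which $(x_r, X_r)$ is not constant. The only real obstacle is an algebraic bookkeeping one: spotting the identity $1-(u+v-uv) = (1-u)(1-v)$, which is what makes the threshold $3/4$ in the definition of $C_r$ match exactly the sign change of the Hessian determinant; once that is in hand the rest is a mechanical calculation.
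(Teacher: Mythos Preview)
Your approach is essentially identical to the paper's: both factor $f_r$ through the two-variable function $g(u,v) = (1-u)(1-v)/(uv)$, compute its Hessian, and identify positive definiteness with the condition $u+v-uv < 3/4$. Your treatment is slightly more explicit---you verify convexity of the domain $D_r$ and flag the subtlety of precomposing with a non-injective linear map---whereas the paper dispatches the latter with a citation to \cite[Exercise 1.1.9]{convex} and does not separately check that $C_r$ is convex.
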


\begin{proof}
	Consider the function $g:(0,1)^2\rightarrow \mathbb{R}$ defined by $g(z_1,z_2)=({1-z_1})/{z_1}\cdot ({1-z_2})/{z_2}$. As $g$ is twice differentiable, $g$ is strictly convex, where the Hessian of $g$ is positive definite by \cite[Theorem 3.1.11 and Exercise 3.1.12(c)]{convex}:
	\[\text{Hess}(g)=
	\begin{pmatrix}
		2(1-z_2)/z_1^3z_2 & 1/z_1^2z_2^2          \\
		1/{z_1^2z_2^2}    & {2(1-z_1)}/{z_1z_2^3}
	\end{pmatrix}.
	\]
	Moreover, a $2\times 2$ matrix is positive definite if and only if $\det(H)>0$ and $\text{trace}(H)>0$, where
	\begin{align*}
		\det(\text{Hess}(g)) =\frac{4(1-z_1)(1-z_2)}{z_1^4z_2^4}-\frac{1}{z_1^4z_2^4}\quad &\ \textnormal{and}\quad \text{trace}(\text{Hess}(g))  =\frac{2(1-z_1)}{z_1z_2^3}+\frac{2(1-z_2)}{z_1^3z_2}.
	\end{align*}
	Since the trace of the Hessian is always positive in our domain, $g$ is strictly convex, where the determinant is positive.
	Hence, $g$ is strictly convex on $C_g=\set{(z_1,z_2)\in(0,1)^2}{z_1+z_2-z_1z_2<3/4}$. Now, notice that $f_r=g(x_r,X_r)$. Since the map $x\ra (x_r,X_r)\in(0,1)^2$ satisfies the conditions listed in \cite[Exercise 1.1.9]{convex},  we get $f_r$ is strictly convex on $C_r=\set{x\in \simp}{x_r+X_r-x_rX_r<3/4}$.
\end{proof}

We already know that some type 4b displacement attains $\beta^*$ at a point of optimality by \fullref{Some4bAttains}.
It is also true that any optimal point of a strictly convex function is unique \cite[Exercise 2.1.8(a)]{convex}.
Therefore, proving any optimal point must be in the domain of convexity of all type 4b relation displacements will take us most of the way to the uniqueness.

\begin{theorem}
	If $r$ is a type 4b relation, then $x^*\in C_r$ for all $x^*\in\simp$ such that $F(x^*)=\beta^*$.
\end{theorem}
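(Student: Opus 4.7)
The plan is to argue by contradiction: suppose there is an optimal $x^*\in\simp$ with $F(x^*)=\beta^*$ and a type 4b relation $r_1$ such that $x^*\notin C_{r_1}$, equivalently $(1-x^*_{r_1})(1-X^*_{r_1})\leq 1/4$. Because $r_1$ is of type 4b, $\psi_{r_1}\in\Psi_{r_1}$, so $x^*_{r_1}\leq X^*_{r_1}$; strict inequality follows from $|\Psi_{r_1}|\geq 2$ and the positivity of every coordinate of $x^*\in\simp$. Substituting $1-x^*_{r_1}>1-X^*_{r_1}$ into the hypothesis gives $(1-X^*_{r_1})^2<1/4$, hence $X^*_{r_1}>1/2$. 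Inspecting \fullref{tab:3}, $\Psi_{r_1}$ consists precisely of those $\psi\in\Psi$ whose reduced form begins with the leading slot $\xi_{i_0}^{t_0}$ of $\psi_{r_1}$, so strictly more than half of the total mass of $x^*$ lives on that slot.

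The next step is to exhibit a tangent direction $\vec{v}\in T_{x^*}\simp$ along which every displacement active at $x^*$ strictly decreases, which will contradict the optimality of $x^*$. By \fullref{Some4bAttains} there is a type 4b $r_0$ with $f_{r_0}(x^*)=\beta^*$, so the active set is nonempty and contains a type 4b relation. I would construct $\vec{v}$ as a transfer of mass out of $\Psi_{r_1}$ into $\Psi-\Psi_{r_1}$, combined with an internal rebalancing among the type 4 coordinates in the spirit of the unit vector built in the proof of \fullref{Some4bAttains}.

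Using the explicit gradient formulas (\ref{grad1}) and (\ref{grad2}) together with the cardinalities recorded in \fullref{tab:3} and the bound $\beta^*\leq\alpha$ from \fullref{AlphaIsBound}, I would verify type-by-type that $\nabla f_r(x^*)\cdot\vec{v}<0$ for every active $r$. The critical input is the newly established inequality $X^*_{r_1}>1/2$: it flips the sign of $\nabla f_{r_0}\cdot\vec{v}$ for the active type 4b relation in the favorable direction, which was not available in \fullref{Some4bAttains} where no such quantitative bound was assumed. An additional lever, visible from \fullref{tab:3}, is the complementarity $X^*_{r'}+X^*_r=1$ for the pair consisting of a type 1a relation $r'$ and a type 4b relation $r$ sharing the same leading slot $(i_0,t_0)$; under our assumption it forces $X^*_{r'}<1/2$ and supplies the quantitative slack needed to control the type 1a directional derivatives.

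The main obstacle I anticipate is the simultaneous sign verification across all five relation types in $\mathcal{F}$, since in this lemma any of them may occupy the active set at once, whereas \fullref{Some4bAttains} only had to handle four types in a single shared direction of decrease. The technical heart of the proof lies in choosing the coefficients of $\vec{v}$ — weighted differently among the types and between the two halves of the partition $\Psi=\Psi_{r_1}\sqcup(\Psi-\Psi_{r_1})$ — so that $\beta^*\leq\alpha$, $x^*_{r_1}<X^*_{r_1}$, and $X^*_{r_1}>1/2$ jointly force every inner product $\nabla f_r(x^*)\cdot\vec{v}$ on the active set to be strictly negative. Once that sign analysis is complete, the contradiction with the optimality of $x^*$ is immediate, and we conclude that $x^*\in C_r$ for every type 4b relation $r$.
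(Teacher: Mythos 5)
Your opening step is sound: from $x^*\notin C_{r_1}$ and $\psi_{r_1}\in\Psi_{r_1}$ (hence $x^*_{r_1}<X^*_{r_1}$) you correctly extract $X^*_{r_1}>1/2$, which recovers, in contrapositive form, exactly Case~(A) of the paper's argument. From that point on, however, the proposal stops being a proof. You announce a plan to build a tangent direction $\vec{v}$ that strictly decreases every displacement function active at $x^*$ and to ``verify type-by-type'' that $\nabla f_r(x^*)\cdot\vec{v}<0$, but neither the construction of $\vec{v}$ nor the sign verification is supplied, and that is where the entire difficulty of the theorem sits.

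Worse, the plan runs into the very obstacle that \fullref{Some4bAttains} was engineered to avoid. In that lemma a common direction of decrease was found only for types 1a, 2b, 3a, 5a; the type 4b directional derivative was left uncontrolled, and the argument survived only because one could \emph{assume} no type 4b relation was active at $x^*$. Here you cannot: \fullref{Some4bAttains} guarantees some type 4b relation $r_0$ is active, and $r_0$ need not coincide with your $r_1$, so $X^*_{r_1}>1/2$ gives you no leverage on $\nabla f_{r_0}(x^*)\cdot\vec{v}$. You would need a single $\vec{v}$ that decreases the active 4b relations and the rest of $\mathcal{F}$ simultaneously, and the gradient formulas (\ref{grad1})--(\ref{grad2}) give no indication that such a direction exists. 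The paper sidesteps this entirely: after (implicitly) disposing of Case~(A) as you did, it introduces a family $T$ of $2n$ type 4b relations (including the bad one) whose sets $\Psi_r$ partition $\Psi$, so that $\sum_{r\in T}X^*_r=1$, and closes with two purely algebraic cases --- either some $X^*_r\leq 1/(2n+2)$, and then $f_r(x^*)\leq\alpha$ together with the element count in $\Psi_{r_1}$ forces $\alpha>(2n-1)^3$, contradicting \fullref{Proots}; or every $X^*_r>1/(2n+2)$ while $X^*_{r_1}>1/2$, and the partition identity yields $1>1$. Keep your first step and then switch to this partition argument; the descent-direction route has a structural obstruction you have not addressed.
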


\begin{proof}
	Assume there exists a point of optimality $x^*$  such that $x^*\notin C_{r_0}$ for a type 4b group theoretical relation $r_0:\ \ \psi_0=\xigent{i_0}{t_0}{j_0}{s_0}{i_0}{-t_0},$ $\Psi_0=\{\xi_{i_0}^{2t_0},$ $\xigen{i_0}{t_0}{j}{2s},\ \xigent{i_0}{t_0}{j}{s}{k}{p}\}$, 
where	
	\begin{equation}\label{AboutC}
		x^*_{0}+X^*_{0}-x^*_{0}X^*_{0}\geq 3/4.
	\end{equation}
	Our aim is to obtain a contradiction under these conditions.
	Let $T$ be the collection of type 4b relations such that $\psi_r=\xigent{i}{t}{j_0}{s_0}{i}{-t}$ and $\psi_r=\xigent{j_0}{s}{i_0}{t_0}{j_0}{-s}$.
	Notice that $(\Psi_r)_{r\in T}$ gives a disjoint partition of $\Psi$.
	This means that
	$\displaystyle \Psi=\bigsqcup_{r\in T}\Psi_r,$
	which implies that for any $x\in\simp$, we get
	\begin{equation}\label{About1}
		1=\sum_{r\in T}X_r.
		\end{equation} 
		Based on the possible location of $x^*$, we will separate the proof into the following three cases 
\begin{equation*} (A)\ \frac{1}{2} \geq X^*_0,\quad
		(B)\ \frac{1}{2n+2} \geq X^*_{r_1} \text{ for some }r_1\in T,\quad
		 (C)\ \frac{1}{2}<X^*_0 \text{ and } \frac{1}{2n+2}<X^*_{r} \text{ for all }r\in T.
\end{equation*}
Each one of these cases will end up with a contradiction, proving the theorem. Assume $(A)$ is the case. Then, (\ref{AboutC}) gives us the following chain of inequalities:
	\begin{equation*}
		 X^*_0(1-x^*_0)+x^*_0\geq 3/4\quad \textnormal{or}\quad \frac{1}{2}(1-x^*_0)+x^*_0 \geq 3/4\quad \textnormal{or}\quad x^*_0 \geq 1/2.
	\end{equation*}
	However, as $\psi_0\in\Psi_0$, we get the contradiction $1/2\geq X^*_0 >x^*_0 \geq 1/2$. 
	
	Assume $(B)$ is the case. Let $\psi_{r_1}=\xigent{i_1}{t_1}{j_1}{s_1}{i_1}{-t_1}$. Consider any type 4b relation $r$ with $\psi_r=\xigent{i_1}{t_1}{j}{s}{i_1}{-t_1}$. By \fullref{AlphaIsBound}, we have
	\begin{equation*}
		\frac{1-x^*_{r}}{x^*_{r}}\cdot \frac{1-X^*_{r}}{X^*_{r}} = f_{r}(x^*) \leq \beta^* \leq \alpha.
	\end{equation*}
	This, after a number of re-arrangements, turns into the following inequality
	\begin{equation*}
		\frac{1-X^*_{r}}{X^*_{r}(\alpha-1)+1} \leq x^*_{r}.
	\end{equation*}
	Notice that $\Psi_{r_1}=\Psi_r$ and so $1/(2n+1) \geq X^*_{r_1}=X^*_r$. In the above inequality, the left side is a decreasing function of $X^*_{r}$. Thus, by the case assumption, we preserve the inequality by replacing $X^*_{r}$ with $1/(2n+2)$. We obtain
	\begin{equation*}
		\frac{2n+1}{\alpha+2n+1} = \frac{1-\displaystyle\frac{1}{2n+2}}{\displaystyle\frac{1}{2n+2}(\alpha-1)+1} \leq x^*_{r}=x^*(\xigent{i_1}{t_1}{j}{s}{i_1}{-t_1}).
	\end{equation*}
	The following inequalities 
	\begin{equation*}
		\frac{1}{2n+2}\geq X^*_{r_1}>\sum_{j\neq i_1,\ s=\pm1}x^*(\xigent{i_1}{t_1}{j}{s}{i_1}{-t_1}) \geq 2(n-1)\cdot\frac{2n+1}{\alpha+2n+1}
	\end{equation*}
	are obtained by using the previous inequality and the fact that $\xigent{i_1}{t_1}{j}{s}{i_1}{-t_1}\in \Psi_{r_1}$ for all $j\neq i_1$, $s=\pm1$. Again, after a number of re-arrangements, we get
			$\alpha>(2n-1)^3+8n^2-12n-4.$
	However, we have $8n^2-12n-3>0$ for $n\geq 2$ implying $\alpha>(2n-1)^3$, contradicting the proof of \fullref{Proots}.
	
	Assume $(C)$ is the case. By using (\ref{About1}) and the fact that $n\geq 2$, we derive that
	\begin{align*}
		1=X^*_0+\sum_{r\in T-\{r_0\}}X^*_r > \frac{1}{2} + (2n-1)\cdot \frac{1}{2n+2}=1+\frac{1}{2}-\frac{3}{2n+2} \geq 1,
	\end{align*}
	which is the final contradiction that we needed.
\end{proof}

It is a simple exercise to show that the max function of strictly convex functions is strictly convex.
Hence, we have the fact that $F$ is strictly convex on $\displaystyle\cap_{r\in \text{Type 4b}}C_r$ which is a non empty set as it contains all the optimal points at the very least.
As optimal points of strictly convex functions are unique, we have proven the following corollary.

\begin{corollary}\label{Uniquness}
	The optimal point of $\text{inf}_{x\in\simp}F(x)$ is unique.
\end{corollary}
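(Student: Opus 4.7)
The plan is to combine three ingredients already in place---(i) every optimum of $F$ lies in the set $\mathcal{C}:=\bigcap_{r\in\text{type 4b}}C_r$ by the preceding theorem, (ii) strict convexity of $f_r$ on $C_r$ for each type 4b $r$ by the lemma above, and (iii) \fullref{Some4bAttains}, which forces some type 4b displacement to realize $\beta^*$ at every optimum---in order to reduce the uniqueness claim to the classical fact that a strictly convex function on a convex set has at most one minimizer.

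First I would verify that $\mathcal{C}$ is convex. The defining inequality $x_r+X_r-x_rX_r<3/4$ of $C_r$ is equivalent to $\log(1-x_r)+\log(1-X_r)>-\log 4$; since $x\mapsto x_r$ and $x\mapsto X_r$ are linear functionals on $\simp$ and $z\mapsto \log(1-z)$ is concave on $(0,1)$, each $C_r$ is a strict super-level set of a concave function and hence convex. A finite intersection of convex sets is convex, so $\mathcal{C}\subseteq\simp$ is convex, and by the preceding theorem it is non-empty, since every optimal point of $F$ lies inside it.

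Next I would invoke the elementary fact that the pointwise maximum of a finite family of strictly convex functions on a common convex set is strictly convex, applied to the subfamily $\{f_r\}_{r\in\text{type 4b}}$ over $\mathcal{C}$. Together with \fullref{Some4bAttains}, this is intended to upgrade to strict convexity of $F$ on $\mathcal{C}$, from which uniqueness follows by the standard midpoint contradiction: two distinct minimizers $x^*,y^*$ would both lie in $\mathcal{C}$, the midpoint $z=(x^*+y^*)/2$ would lie in $\mathcal{C}$ by convexity, and strict convexity would force $F(z)<\tfrac{1}{2}(F(x^*)+F(y^*))=\beta^*$, contradicting the definition of $\beta^*$ as the infimum of $F$ on $\simp$.

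The main obstacle is this last upgrade: what the previous step cleanly delivers is strict convexity of the type 4b maximum $F_4:=\max_{r\in\text{type 4b}}f_r$ on $\mathcal{C}$, not strict convexity of $F$ itself, since the non-type-4b members of $\mathcal{F}$ are not known to be convex on $\mathcal{C}$---indeed, the candidate optimum $y$ from \fullref{AlphaIsBound} already fails the type 1a convexity condition $(1-y_r)(1-Y_r)>1/4$. Closing this gap would require either a localization near an optimum $x^*$ (so that by continuity only the displacements active at $x^*$ influence $F$ on a neighborhood, and at least one of those is type 4b by \fullref{Some4bAttains}) or a direct descent argument along the segment $[x^*,y^*]$, using strict convexity of every type 4b $f_r$ on $C_r\supseteq\{x^*,y^*,z\}$ and then ruling out an upward spike of any non-type-4b $f_r$ at $z$ by a sign analysis of $f_r$ restricted to the line.
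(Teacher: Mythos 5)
Your proposal mirrors the paper's strategy exactly---combine the preceding theorem (optima lie in $\mathcal{C}:=\bigcap_{r\in\text{type 4b}}C_r$), the convexity lemma, and \fullref{Some4bAttains}---and, more importantly, you correctly identify the genuine gap in that strategy: only the type-4b $f_r$ are shown to be strictly convex on $\mathcal{C}$, so the max function $F$, which also involves the non-type-4b relations, is \emph{not} established to be strictly convex there. You should know that this gap is present in the paper's own proof as well. The paper's proof of the corollary is precisely the paragraph preceding it, which reads ``It is a simple exercise to show that the max function of strictly convex functions is strictly convex. Hence, we have the fact that $F$ is strictly convex on $\bigcap_{r\in\text{Type 4b}}C_r$...'' --- but this inference presupposes that \emph{every} $f_r$ appearing in the definition of $F$ is strictly convex on that intersection, and the cited lemma only delivers strict convexity of $f_r$ on its own $C_r$. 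Your numerical check confirms that this cannot be repaired by a side argument: at the point $y$ from \fullref{AlphaIsBound} (which, after correcting a visible typo in \fullref{optimization:main}, is the optimal point $x^*$ itself), one has for any type 1a relation $r$ that $(1-y_r)(1-Y_r) = \frac{(2n-1)\alpha}{(2n-1)\alpha+1}\cdot\frac{1}{2n} < \frac{1}{4}$ for all $n\ge 2$, so $x^*\notin C_r$ for type 1a relations, and the non-type-4b components of $F$ are genuinely non-convex at the very point where uniqueness must be certified.

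Of your two suggested repairs, the localization one does not close the gap: \fullref{OptimizationQuest} later shows that \emph{all} relations in $\mathcal{F}$ are active at $x^*$ (every $f_r(x^*)=\beta^*$), so there is no neighborhood in which $F$ reduces to the type-4b maximum alone; the non-convex constituents stay active. (One also cannot invoke \fullref{OptimizationQuest} in this corollary's proof, since it relies on \fullref{SymSimplification}, which in turn invokes this very corollary.) The second repair --- strict convexity of $F_4:=\max_{\text{type 4b}}f_r$ along $[x^*,y^*]$ forcing $F_4<\beta^*$ at interior points, plus an argument that the non-type-4b $f_r$ cannot rise above $\beta^*$ there --- is the right shape of argument, but the needed control on the non-convex $f_r$ along the segment is exactly what is missing, and it does not follow from anything currently in the paper. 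A plausible route is to exploit the direction $\vec u$ from \fullref{Some4bAttains} (along which, as one can check, every type-4b $f_r$ is strictly increasing and every other $f_r$ strictly decreasing) to push interior points of the segment to points where $F<\beta^*$, but making this rigorous requires an additional quantitative estimate not present in the paper. In short: you identified the flaw correctly and honestly; the paper's proof of this corollary is, as written, incomplete in exactly the way you describe.
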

Henceforth, we shall use $x^*$ to denote the unique optimal point. In other words, we let
\begin{equation*}
	x^*:\ \text{ the unique point in } \simp \text{ with } F(x^*)=\beta^*,
\end{equation*}
which marks the end of phase two.

We start phase three by simplifying $x^*$. The uniqueness of $x^*$ will be heavily used in this simplification. Then, we apply the Karush-Kuhn-Tucker theorem. 

\begin{lemma}\label{SymSimplification}
	There exists $a^*,\ b^*,\ c^*\in (0,1)$ such that
	\begin{equation*}
		x^*(\psi)=\left.
		\begin{cases}
			a^* & \text{for } \psi \in \text{type 1}                     \\
			b^* & \text{for } \psi \in \text{type 2 or type 3 or type 5} \\
			c^* & \text{for } \psi \in \text{type 4}.
		\end{cases}\right.
	\end{equation*}
\end{lemma}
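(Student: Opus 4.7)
The plan is to combine the uniqueness of $x^*$ from \fullref{Uniquness} with two successive symmetry arguments. First, an outer symmetry group of $\Gamma$ will collapse $x^*$ to at most five constants, one per type of $\Psi$; then a coincidence among the relations of types 2b, 3a, 5a forces three of those five values to coincide.

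For the first step, I would introduce the group $H=S_n\ltimes\{\pm1\}^n$ acting on $\Gamma$ via the automorphisms $\xi_i\mapsto \xi_{\pi(i)}^{\epsilon_i}$. Each $\sigma\in H$ restricts to permutations of $\Xi\cup\Xi^{-1}$, $\Gamma_*$, and $\Psi$ that preserve the five types of \fullref{tab:1}, and induces a permutation of $\mathcal{F}$ preserving the relation-types of \fullref{tab:3}. Letting $H$ act on $\simp$ by $(\sigma\cdot x)(\psi):=x(\sigma^{-1}\psi)$, a direct computation from the defining formula of $f_r$ gives $f_r\circ\sigma=f_{\sigma^{-1}r}$, and therefore $F\circ\sigma=F$. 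Uniqueness of $x^*$ then forces $\sigma\cdot x^*=x^*$ for every $\sigma\in H$, and since $H$ acts transitively on each of the five types, $x^*$ takes a single value per type: constants $a,b_2,b_3,b_4,b_5\in(0,1)$.

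For the second step, set $B:=b_2+b_3+b_4+2(n-2)b_5$. The coincidence to verify in \fullref{tab:3} is that for every relation of type 2b, 3a, or 5a the set $\Psi-\Psi_r$ contains exactly one element of each of types 2, 3, 4 together with $2(n-2)$ elements of type 5; hence $X_r=1-B$ uniformly across these three relation-types, and with $x_r\in\{b_2,b_3,b_5\}$ accordingly,
\[
f_{2b}(x^*)=h(b_2,B),\quad f_{3a}(x^*)=h(b_3,B),\quad f_{5a}(x^*)=h(b_5,B),
\]
where $h(u,B):=(1-u)B/(u(1-B))$ is strictly decreasing in $u\in(0,1)$. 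A parallel count shows that $f_{1a}(x^*)$ and $f_{4b}(x^*)$ depend only on $a$, $b_4$, and $B$, while $X(x^*)=1$ reduces to $2na+4n(n-1)B=1$, tying $B$ to $a$.

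I would conclude with a max-min argument. Holding $a$ and $b_4$ fixed (hence $B$), any perturbation of $(b_2,b_3,b_5)$ on the plane $\{b_2'+b_3'+2(n-2)b_5'=B-b_4\}\cap\mathbb{R}^3_{>0}$ stays in $\simp$ and leaves $f_{1a},f_{4b}$ unchanged. Over this plane, minimizing $\max\{h(b_i',B)\}$ is equivalent to maximizing $\min\{b_i'\}$, which by $b_2'+b_3'+2(n-2)b_5'\geq 2(n-1)\min\{b_i'\}$ is uniquely attained at $b_2'=b_3'=b_5'=(B-b_4)/(2(n-1))$. If $(b_2,b_3,b_5)$ differed from this equalized configuration, passing to it would either strictly decrease $F$ (contradicting $F(x^*)=\beta^*$) or, if $f_{1a}$ or $f_{4b}$ already attains $F(x^*)$, leave $F$ unchanged and produce a second optimum, contradicting uniqueness. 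So $b_2=b_3=b_5=:b^*$, and setting $a^*:=a$, $c^*:=b_4$ gives the claim. The main obstacle in the plan is the combinatorial bookkeeping needed to confirm the identical type-distribution of $\Psi-\Psi_r$ across relation-types 2b, 3a, 5a; this coincidence is what makes $h(\cdot,B)$ the common profile of the three displacement values, without which the closing max-min argument would fail.
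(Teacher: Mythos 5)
Your first step is essentially the paper's: you identify the automorphism group $S_n\ltimes\{\pm1\}^n$ acting on $\Psi$, note that it permutes $\mathcal{F}$ type-by-type so that $F\circ\sigma=F$, and invoke uniqueness (\fullref{Uniquness}) to force $x^*$ to be constant on each of the five types. The paper does the same thing, phrased in terms of the generating transpositions $\tau$ (which send $\xi_{i_1}^{t_1}\leftrightarrow\xi_{i_2}^{t_2}$), with the details laid out in \fullref{AboutTau}.

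Your second step is genuinely different. The paper finds two further permutations $\sigma_1, \sigma_2$ of $\Psi$ (swapping type~2 with type~3, and a type~2 element with a type~5 element) which are \emph{not} $F$-invariant in general, but which do preserve $F$ once $x^*$ has been reduced to constants per type; uniqueness then gives $b_2^*=b_3^*=b_5^*$. You instead replace this symmetry-hunting with an averaging argument: after checking from \fullref{tab:3} that every relation of type 2b, 3a, 5a has $\Psi-\Psi_r$ containing one element each of types 2, 3, 4 and $2(n-2)$ of type 5, so that $X_r=1-B$ with $B=b_2+b_3+b_4+2(n-2)b_5$ uniformly, and that $f_{1a}$ and $f_{4b}$ depend only on $a,b_4,B$, you perturb $(b_2,b_3,b_5)$ on the plane $b_2'+b_3'+2(n-2)b_5'=B-b_4$. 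This leaves $f_{1a},f_{4b}$ fixed and, since $h(\cdot,B)$ is strictly decreasing, equalizing the $b_i'$ strictly decreases the max over the 2b/3a/5a values unless they were already equal; either the perturbation contradicts minimality of $\beta^*$ or it exhibits a second optimum, contradicting \fullref{Uniquness}. This is sound (the edge case $n=2$ simply drops $b_5$, since there are no type-5 elements) and arguably cleaner: it avoids having to discover and verify the ad hoc symmetries $\sigma_1,\sigma_2$, at the cost of the combinatorial bookkeeping needed to see that $X_r=1-B$ for all three relation types — which you correctly flag as the crux. Both routes rely on the same uniqueness input, and both reach \fullref{SymSimplification}.
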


\begin{proof}
	Let $i_1, i_2\in \{1,2,\dots,n\}$ and $t_1,t_2\in\{-1,+1\}$ such that $i_1\neq i_2$.
	Consider the symmetry $\tau\in \text{Sym}(\Psi)$, which switches $\xi_{i_1}^{t_1}$ with $\xi_{i_2}^{t_2}$  and vice versa in every element of $\Psi$. For clarification, we give the details of the action of $\tau$ in \fullref{AboutTau}.
	
	\begin{table}[H]
		\centering
		\raa{2}
		\begin{tabular}{lllll}
			\toprule
			type 1 & $\xi_{i_1}^{\pm 2t_1} \leftrightarrow \xi_{i_2}^{\pm 2t_2}$                                               \\
			\midrule
			type 2 & $\xigen{i_1}{t_1}{i_2}{\pm2t_2} \leftrightarrow \xigen{i_2}{t_2}{i_1}{\pm2t_1}$
			& $\xigen{i_1}{-t_1}{i_2}{\pm2t_2} \leftrightarrow \xigen{i_2}{-t_2}{i_1}{\pm2t_1}$                         \\
			
			& $\xigen{i}{t}{i_1}{\pm2t_1} \leftrightarrow \xigen{i}{t}{i_2}{\pm2t_2}$
			& $\xigen{i_1}{\pm t_1}{j}{2s} \leftrightarrow \xigen{i_2}{\pm t_2}{j}{2s}$                                 \\
			\midrule
			type 3 & $\xigent{i_1}{t_1}{i_2}{\pm t_2}{i_1}{t_1} \leftrightarrow \xigent{i_2}{t_2}{i_1}{\pm t_1}{i_2}{t_2}$
			& $\xigent{i_1}{-t_1}{i_2}{\pm t_2}{i_1}{-t_1} \leftrightarrow \xigent{i_2}{-t_2}{i_1}{\pm t_1}{i_2}{-t_2}$ \\
			
			& $\xigent{i}{t}{i_1}{\pm t_1}{i}{t} \leftrightarrow \xigent{i}{t}{i_2}{\pm t_2}{i}{t}$
			& $\xigent{i_1}{\pm t_1}{j}{s}{i_1}{\pm t_1} \leftrightarrow \xigent{i_2}{\pm t_2}{j}{s}{i_2}{\pm t_2}$     \\
			\midrule
			type 4 & $\xigent{i_1}{t_1}{i_2}{\pm t_2}{i_1}{-t_1} \leftrightarrow \xigent{i_2}{t_2}{i_1}{\pm t_1}{i_2}{-t_2}$
			& $\xigent{i_1}{-t_1}{i_2}{\pm t_2}{i_1}{t_1} \leftrightarrow \xigent{i_2}{-t_2}{i_1}{\pm t_1}{i_2}{t_2}$   \\
			
			& $\xigent{i}{t}{i_1}{\pm t_1}{i}{-t} \leftrightarrow \xigent{i}{t}{i_2}{\pm t_2}{i}{-t}$
			& $\xigent{i_1}{\pm t_1}{j}{s}{i_1}{\mp t_1} \leftrightarrow \xigent{i_2}{\pm t_2}{j}{s}{i_2}{\mp t_2}$     \\
			\midrule
			type 5 & $\xigent{i_1}{t_1}{i_2}{\pm t_2}{l}{p} \leftrightarrow \xigent{i_2}{t_2}{i_1}{\pm t_1}{l}{p}$
			& $\xigent{i_1}{-t_1}{i_2}{\pm t_2}{l}{p} \leftrightarrow \xigent{i_2}{-t_2}{i_1}{\pm t_1}{l}{p}$           \\
			
			& $\xigent{i_1}{t_1}{j}{s}{i_2}{\pm t_2} \leftrightarrow \xigent{i_2}{t_2}{j}{s}{i_1}{\pm t_1}$
			& $\xigent{i_1}{-t_1}{j}{s}{i_2}{\pm t_2} \leftrightarrow \xigent{i_2}{-t_2}{j}{s}{i_1}{\pm t_1}$           \\
			
			& $\xigent{i}{t}{i_1}{t_1}{i_2}{\pm t_2} \leftrightarrow \xigent{i}{t}{i_2}{t_2}{i_1}{\pm t_1}$
			& $\xigent{i}{t}{i_1}{-t_1}{i_2}{\pm t_2} \leftrightarrow \xigent{i}{t}{i_2}{-t_2}{i_1}{\pm t_1}$           \\
			
			& $\xigent{i}{t}{j}{s}{i_1}{\pm t_1} \leftrightarrow \xigent{i}{t}{j}{s}{i_2}{\pm t_2}$
			& $\xigent{i}{t}{i_1}{\pm t_1}{l}{p} \leftrightarrow \xigent{i}{t}{i_2}{\pm t_2}{l}{p}$
			& $\xigent{i_1}{\pm t_1}{j}{s}{l}{p} \leftrightarrow \xigent{i_2}{\pm t_2}{j}{s}{l}{p}$                     \\
			\bottomrule
		\end{tabular}
		\caption{Actions of the symmetry $\tau\in\text{Sym}(\Psi)$}.
		\label{AboutTau}
	\end{table}
	One can prove that $\tau$ can also be considered as an element of $\text{Sym}(\F)$ in the sense that $\psi_{\tau(r)}=\tau(\psi_r)$ and $\Psi_{\tau(r)}=\tau(\Psi_r)$ by observing the effects of $\tau$ on the elements of $\F$ and using \fullref{tab:3}. As a result, we have the following
	\begin{equation*}
		F(\tau(x))=\text{max}_{r\in\F}f_r(\tau(x))=\text{max}_{r\in\F}f_{\tau(r)}(x)=\text{max}_{r\in\F}f_r(x)=F(x).
	\end{equation*}
	In particular, we have $F(\tau(x^*))=F(x^*)$ and hence $\tau(x^*)=x^*$ by \fullref{Uniquness}.
	By repeated application of $\tau$ (with different values for $(i_1,t_1)$ and$(i_2,t_2)$), we can show that any coordinate of $x^*$ is equal to any other coordinate of the same type. We give the following example of type 5 coordinates for clarification
	\begin{equation*}
		x^*(\xigent{i_1}{t_1}{j_1}{s_1}{l_1}{p_1}) \underset{\underset{(i_1,t_1)\rightarrow (i_2,t_2)}{\tau}}{=} x^*(\xigent{i_2}{t_2}{j_1}{s_1}{l_1}{p_1}) \underset{\underset{(j_1,s_1)\rightarrow (j_2,s_2)}{\tau}}{=} x^*(\xigent{i_2}{t_2}{j_2}{s_2}{l_1}{p_1}) \underset{\underset{(l_1,p_1)\rightarrow (l_2,p_2)}{\tau}}{=} x^*(\xigent{i_2}{t_2}{j_2}{s_2}{l_2}{p_2}).
	\end{equation*}
	This proves that there are $a^*,\ b^*_2,\ b^*_3,\ b^*_5,\ c^*\in (0,1)$ such that:
	\begin{equation*}
		x^*(\psi)=\left.
		\begin{cases}
			a^*   & \text{for } \psi \in \text{type 1} \\
			b^*_2 & \text{for } \psi \in \text{type 2} \\
			b^*_3 & \text{for } \psi \in \text{type 3} \\
			b^*_5 & \text{for } \psi \in \text{type 5} \\
			c^*   & \text{for } \psi \in \text{type 4}.
		\end{cases}\right.
	\end{equation*}
	It remains to prove $b^*_2=b^*_3=b^*_5$.
	Fortunately, the above simplification makes it much easier.
	
	Consider $\sigma_1=(\xii^t\xij^{2s} \leftrightarrow \xij^s\xii^t\xij^s\ |\ \forall i,j,t,s) \in \text{Sym}(\Psi)$. It is not true that in general $F(\sigma_1(x))=F(x)$. However, by using the previous simplification for $x^*$ and \fullref{tab:3}, it is easy to see that $F(\sigma_1(x^*))=F(x^*)$. Thus, again by \fullref{Uniquness}, we get
	\begin{equation*}
		b^*_2=x^*(\xi_1\xi_2^2)=x^*(\xi_2\xi_1\xi_2)=b^*_3.
	\end{equation*}
	Similarly, $\sigma_2=(\xi_1\xi_2^2 \leftrightarrow \xi_1\xi_2\xi_3) \in \text{Sym}(\Psi)$ gives us $b^*_2=b^*_5$ which completes the proof.
\end{proof}

Now that we have simplified $x^*$ as much as possible. We shall use the optimization question with the Karush-Kuhn-Tucker theorem to get further equalities on $x^*$ with a target of calculating exact values for its coordinates. 

\begin{theorem}
	For all $r_1, r_2 \in \F$, we have $f_{r_1}(x^*)=f_{r_2}(x^*)$.
	\label{OptimizationQuest}
\end{theorem}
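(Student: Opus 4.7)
The plan is to apply the Karush–Kuhn–Tucker theorem to the smooth reformulation of the min–max problem. I would rewrite $\beta^* = \inf_{x\in\simp} F(x)$ as: minimize $t$ over $(x,t) \in \mathbb{R}^{\Psi}_{++}\times\mathbb{R}$ subject to $f_r(x) - t \leq 0$ for every $r \in \F$ and $\sum_{\psi \in \Psi} x(\psi) - 1 = 0$. By \fullref{Existence} and \fullref{Uniquness}, the unique optimum is $(x^*,\beta^*)$, and the explicit gradients (\ref{grad1}) and (\ref{grad2}) allow one to verify a constraint qualification at $x^*$ directly. KKT therefore produces multipliers $\mu_r \geq 0$ for $r \in \F$ and $\lambda \in \mathbb{R}$ satisfying
\begin{equation*}
    \sum_{r \in \F} \mu_r = 1, \qquad \sum_{r \in \F} \mu_r \nabla f_r(x^*) = -\lambda \sum_{\psi \in \Psi} e_\psi,
\end{equation*}
together with complementary slackness $\mu_r(f_r(x^*)-\beta^*) = 0$. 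The conclusion of the theorem is equivalent to showing that $\mu_r$ can be taken strictly positive for every $r \in \F$.

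I would then symmetrize the multipliers. Any symmetry $\tau$ used in the proof of \fullref{SymSimplification} that fixes $x^*$ also acts on $\F$ via $\psi_{\tau(r)} = \tau(\psi_r)$ and $\Psi_{\tau(r)} = \tau(\Psi_r)$, and hence $f_{\tau(r)}(x^*) = f_r(x^*)$. Averaging $\mu_r$ over the full group $G$ generated by these symmetries yields a KKT multiplier system that is $G$-invariant, and by transitivity of $G$ on each of the five types in $\F$ depends only on the type of $r$. Denote the five resulting type-constants by $\mu_1, \ldots, \mu_5$. The stationarity equation, broken down by the five types of $\Psi$-coordinates in \fullref{tab:1}, similarly collapses into five scalar equations; combined with the normalization $\sum_k N_k^{\F} \mu_k = 1$, where $N_k^{\F}$ is the count of type-$k$ relations from \fullref{tab:3}, this assembles into a $6 \times 6$ linear system for $(\mu_1, \ldots, \mu_5, \lambda)$. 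Using (\ref{grad1}), (\ref{grad2}), and the simplified form $x^* = (a^*, b^*, c^*)$ from \fullref{SymSimplification}, I would compute the entries of the system explicitly in $a^*, b^*, c^*$, solve, and verify that every $\mu_k$ is strictly positive; complementary slackness then gives $f_r(x^*) = \beta^*$ for all $r \in \F$.

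The main obstacle is the positivity check at the end. As a contingency, if some $\mu_{k_0}$ were to vanish in the symmetric solution, I would fall back on the descent-direction strategy of \fullref{Some4bAttains}: construct an explicit tangent vector $\vec{u} \in T_{x^*}\simp$ whose directional derivative $\nabla f_r(x^*) \cdot \vec{u}$ is strictly negative for every active $r$, contradicting the minimality of $x^*$. Either way, the argument splits into four essentially parallel cases, one for each of the potentially-inactive types 1a, 2b, 3a, 5a, and the symmetries packaged in the classifications of \fullref{tab:1}, \fullref{tab:2}, and \fullref{tab:3} reduce each case to a small, structured computation in the same spirit as the one already carried out in \fullref{Some4bAttains}.
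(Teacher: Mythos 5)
Your proposal and the paper's proof both hinge on the Karush--Kuhn--Tucker theorem, the symmetry reduction of \fullref{SymSimplification}, and the fact from \fullref{Some4bAttains} that a type~4b relation is active at $x^*$, but the formulations differ in a meaningful way. You cast the min--max as the epigraph problem ``minimize $t$ subject to $f_r(x)\leq t$ for all $r\in\F$'' in the full ambient space $\mathbb{R}^\Psi\times\mathbb{R}$ and then symmetrize the resulting multiplier system $(\mu_r)_{r\in\F}$ over the symmetry group; the paper instead first restricts to the three-dimensional symmetric slice $\tilde\simp$ parametrized by $(a,b,c)$, observes that on $\tilde\simp$ the displacement functions collapse to the three formulas in \fullref{tab:5}, and then --- crucially --- uses \fullref{Some4bAttains} not as a fallback but to \emph{choose the objective}: since the type~4b value $(2n-1)(1-c)/c$ is known to realize the max at the optimum, the problem becomes minimizing that quantity subject to only two inequality constraints $g_1\le 0$, $g_2\le 0$ (types~1a and 2b/3a/5a bounded by 4b) plus the affine constraint $h=1$. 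This gives a $3\times 3$ KKT linear system whose multipliers are shown strictly positive by a short sign-chasing argument valid for every $n$, with no explicit solving required. Your route is legitimate and more systematic (the epigraph reformulation is the textbook way to smooth a max), and your multiplier-averaging argument is sound (the $\tau$-symmetries fix $x^*$, act on $\F$, and preserve the affine constraint direction, so averaging preserves stationarity, nonnegativity, and complementary slackness). But it leaves more to check: you still need to verify that the $\tau$-generated group acts transitively on each of the five relation types (plausible but unargued), and the decisive positivity of all five symmetrized $\mu_k$ is deferred to an unperformed $6\times 6$ computation, with the descent-direction argument of \fullref{Some4bAttains} kept as a safety net. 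The paper's smaller reduced system makes that positivity transparent by inspection of signs, which is what makes its proof shorter and entirely $n$-uniform.
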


\begin{proof}
	We will obtain the proof by applying the Karush-Kuhn-Tucker theorem to the optimization question
	\begin{equation*}
		\text{min } F(x) \text{ for } x \in \simp.
	\end{equation*}
	By \fullref{SymSimplification}, we can restrict our search for an optimal solution to those elements of $\simp$ showing the same coordinate distribution as $x^*$. In other words, we solve the problem below:
	\begin{align*}
		& \text{min } F(x) \text{ for } x\in \tilde{\simp} , \textnormal{ where }         \\
		& \tilde{\simp}:=\{ x\in \simp\ |\ \exists a,b,c\in (0,1),\
		x(\psi)=\left.
		\begin{cases}
			a & \text{for } \psi \in \text{type 1}                     \\
			b & \text{for } \psi \in \text{type 2 or type 3 or type 5} \\
			c & \text{for } \psi \in \text{type 4}.
		\end{cases}\right.
	\end{align*}
	A more illuminating observation comes from the fact that $X=1$ for all $x\in \tilde{\simp}$ and the counts in \fullref{tab:1}. For any $x\equiv(a,b,c)$, we have
	\begin{align}\label{aFormula}
		1 = 2na+8n(n-1)^2b+4n(n-1)c.
	\end{align}
	
	The displacement functions are simplified as it no longer matters what specific coordinate $\Psi_r$ contains, but the number of types of coordinates.
	As an example, for any type 1a relation $r$ and $x\equiv(a,b,c)\in\tilde{\simp}$, we have (with the help of \fullref{tab:3}) $x_r=a$ and $X_r=1-(a+4(n-1)^2b+2(n-1)c)$.
	Moreover using (\ref{aFormula}) to replace $a$, we obtain $X_r=(2n-1)/(2n)$.
	This simplifies $f_r(x)=(1-x_r)/{x_r}\cdot({1-X_r})/{X_r}$ into $f_r(x)=({1-a})/{a}\cdot {1}/{2n-1}$. We summarize these simplifications for all types of relations in $\F$ in \fullref{tab:5}.
	\begin{table}[h]
		\centering
		\raa{2}
		\begin{tabular}{llll}
			\toprule
			$r$                & $x_r$ & $X_r$             & $f_r(x)$                                        \\
			\midrule
			Type 1a            & $a$   & $\displaystyle\frac{2n-1}{2n}$ & $\displaystyle\frac{1-a}{a}\cdot \frac{1}{2n-1}$             \\
			\midrule
			Type 2b, 3a and 5a & $b$   & $1-(2(n-1)b+c)$   & $\displaystyle\frac{1-b}{b}\cdot\frac{1-2na}{4n(n-1)-1+2na}$ \\
			\midrule
			Type 4b            & $c$   & $\displaystyle\frac{1}{2n}$    & $\displaystyle\frac{1-c}{c}\cdot (2n-1)$                     \\
			\bottomrule
		\end{tabular}
		\caption{$f_r(x)$ for $x\equiv(a,b,c)\in\tilde{\simp}$}
		\label{tab:5}
	\end{table}
	
	By \fullref{Some4bAttains}, we know that there exists a Type 4b relation $r_0$ such that $\beta^*=f_{r_0}(x^*)\geq f_r(x^*)$ for all $r\in\F$.
	Applying this knowledge in light of \fullref{tab:5}, we get two new inequalities further narrowing the search for the optimal solution. We obtain
	\begin{gather*}
		\frac{1-a}{a}\cdot \frac{1}{2n-1} \leq \frac{1-c}{c}\cdot (2n-1)\quad\textnormal{ and }\quad
		\frac{1-b}{b}\cdot\frac{1-2na}{4n(n-1)-1+2na} \leq \frac{1-c}{c}\cdot (2n-1).
	\end{gather*}
	This simplifies the optimization question to its ultimate form as follows
	\begin{table}[H]
		\centering
		\raa{2}
		\begin{tabular}{ll}
			Target:      & $f(a,b,c)=\displaystyle\frac{(2n-1)(1-c)}{c}$ (minimize)                                     \\
			\midrule
			Constraints: & $g_1(a,b,c)=\displaystyle\frac{1-a}{a}\cdot\frac{c}{1-c}-(2n-1)^2\leq 0  $                   \\
			& $g_2(a,b,c)=\displaystyle\frac{1-2na}{4n(n-1)-1+2na}\cdot\frac{1-b}{b}\cdot\frac{c}{1-c}-(2n-1)\leq 0$ \\
			& $h(a,b,c)=\displaystyle 2na+8n(n-1)^2b+4n(n-1)c=1$                                            \\
			& $(a,b,c)\in U=(0,1)^3.$
		\end{tabular}
	\end{table}
	
	From hereon in, we shall apply the optimization techniques detailed in \cite[Chapter 7.2]{convex}.
	Our goal is to prove that the Lagrange multipliers for $g_1$ and $g_2$ can not be $0$, which (by Karush-Kuhn-Tucker Theorem as in \cite[Theorem 7.2.9]{convex}) will imply that the relevant inequalities in the constraints has to be active (equal) at the optimal point, proving our theorem.
	
	We shall first discuss the three prerequisites of Karush-Kuhn-Tucker Theorem. The existence of the optimal solution is proven in \fullref{Existence}.
	The target and constraint functions are obviously differentiable on $U$.
	We also need to show that the Mangasarian-Fromovitz constraint qualification \cite[Assumption 7.2.3]{convex} holds.
	The qualification for our question can be checked by verifying the statements 
	\begin{itemize}
	\item[(1)] if 
		the function $p \mapsto \nabla h(x^*)\cdot p$ from $\mathbb{R}^3$ to $\mathbb{R}$ is onto and, 
		\item[(2)]
		$\set{p\in\mathbb{R}^3}{\nabla h(x^*)\cdot p=0 \text{ and } \nabla g_i(x^*)\cdot p<0 \text{ for } i=1,2} \neq \emptyset.$
		\end{itemize}
	The first one is trivially true as $\nabla h$ is constant and nonzero.
	The second one also holds since the point $p=(2(n-1),0,-1)$ is in the set. Hence, by the Karush-Kuhn-Tucker theorem, there exists $\lambda_1,\lambda_2\in\mathbb{R}_+$ and $\lambda_3\in\mathbb{R}$ such that:
	\begin{align*}
		(\nabla f+\lambda_1\nabla g_1+\lambda_2\nabla g_2+\lambda_3\nabla h)(x^*\equiv (a^*,b^*,c^*))=\vec{0}
	\end{align*}
	This gives us the following linear system of $\lambda_1, \lambda_2$ and $\lambda_3$
	\begin{alignat}{4}
		-\frac{c^*}{(a^*)^2(1-c^*)}\lambda_1  & \hspace*{1.4cm} {}+{} \frac{8n^2(n-1)}{A}\lambda_2                  & {}+{} 2n\lambda_3        & {}={} 0,\label{lagrange:1}                    \\
		& \hspace*{0.8cm} {}-{} \frac{(1-2na^*)c^*}{A(b^*)^2(1-c^*)}\lambda_2 & {}+{} 8n(n-1)^2\lambda_3 & {}={} 0, \label{lagrange:2}                    \\
		\frac{1-a^*}{(a^*)(1-c^*)^2}\lambda_1 & {}+{} \frac{(1-2na^*)(1-b^*)}{Ab^*(1-c^*)^2}\lambda_2               & {}+{} 4n(n-1)\lambda_3   & {}={} \frac{2n-1}{(c^*)^2}, \label{lagrange:3}
	\end{alignat}
	where $A=4n(n-1)-(1-2na)$.
	It is easy to see that $1-2na\in(0,1)$ by using the constraint $h=1$. Thus, every parenthesis in this system is positive.
	This implies that the signs in front of the coefficients are the signs of the coefficients.
	
	If $\lambda_3=0$, then by (\ref{lagrange:2}) $\lambda_2=0$ as well,
	which implies $\lambda_1=0$ by (\ref{lagrange:1}).
	But, this is a contradiction to (\ref{lagrange:3}). Thus, $\lambda_3\neq 0$.
	The fact that $\lambda_3$ is non-zero implies $\lambda_2>0$ by (\ref{lagrange:2}),
	which in turn, again via (\ref{lagrange:2}), shows that $\lambda_3>0$ as well.
	Finally, the positiveness of $\lambda_2$ and $\lambda_3$ on (\ref{lagrange:1}) shows that $\lambda_1>0$. Therefore, the Lagrange multipliers of $g_1$ and $g_2$ are greater than $0$, proving what we wanted.
\end{proof}
\begin{theorem}\label{optimization:main}
	If $x^*\equiv (a^*,b^*,c^*)$ is as in \fullref{SymSimplification} and $\alpha$ is as defined after \fullref{Proots}, then we have
	\begin{itemize}
		\item[(i)] $\displaystyle a^*=\frac{1}{(2n-1)+\alpha}$,\quad
		$\displaystyle b^*=\frac{(2n-1)(\alpha-1)}{(4n^2-4n-1)(2n-1)\alpha^2+2(2n^2-1)\alpha-(2n-1)}$,\quad
		$\displaystyle c^*=\frac{2n-1}{2n-1+\alpha},$
		\item[(ii)] $\beta^*=\alpha^*=\alpha.$
	\end{itemize}
\end{theorem}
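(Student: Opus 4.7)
The plan is to combine \fullref{OptimizationQuest} with the simplified expressions in \fullref{tab:5} to reduce the entire problem to a single polynomial identity in $\beta^*$. Since $x^*\in\tilde\simp$ takes only three distinct values $a^*$, $b^*$, $c^*$ on its coordinates, and \fullref{tab:5} already collapses the displacements $f_r$ on $\tilde\simp$ into three rows, \fullref{OptimizationQuest} translates into the three simultaneous equations
\begin{equation*}
\frac{1-a^*}{a^*(2n-1)}\ =\ \frac{1-b^*}{b^*}\cdot\frac{1-2na^*}{4n(n-1)-1+2na^*}\ =\ \frac{(2n-1)(1-c^*)}{c^*}\ =\ \beta^*.
\end{equation*}
The first and the third are one-line inversions that express $a^*$ and $c^*$ as rational functions of $\beta^*$; substituting the expression for $a^*$ into the middle equation and solving for $b^*$ is straightforward algebra. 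The outputs are precisely the rational functions of $\beta^*$ that appear in (i) once $\beta^*$ is renamed $\alpha$.

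The parameter $\beta^*$ itself is pinned down by the one remaining constraint not yet used, namely the simplex equation $h(a^*,b^*,c^*) = 2na^* + 8n(n-1)^2 b^* + 4n(n-1)c^* = 1$ from (\ref{aFormula}). Substituting the rational-function expressions for $a^*$, $b^*$, $c^*$ and clearing the three denominators (whose degrees in $\beta^*$ are $1$, $2$, $1$) produces a polynomial equation of degree $4$ in $\beta^*$. The main obstacle of the proof is the verification that this polynomial coincides, up to a positive constant factor, with $\mathcal{P}(\lambda)$ in (\ref{P}); the coefficients to be matched are polynomials in $n$ of degree up to $6$, so although the manipulations are routine, I would carry them out symbolically rather than by hand.

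Granted the identification $\mathcal{P}(\beta^*) = 0$, \fullref{AlphaIsBound} tells us that $\beta^* > 1$, while \fullref{Proots} asserts that the unique root of $\mathcal{P}$ larger than $1$ is $\alpha$ (the other three roots being strictly less than $1$). Hence $\beta^* = \alpha$, and plugging this back into the rational expressions for $a^*$, $b^*$, $c^*$ gives (i). For (ii), \fullref{AlphaIsBound} supplies the sandwich $\beta^* \leq \alpha^* \leq \alpha$, so the equality $\beta^* = \alpha$ forces $\alpha^* = \alpha$ as well, completing the proof.
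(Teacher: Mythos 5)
Your proposal is correct and follows essentially the same route as the paper: invoke \fullref{OptimizationQuest} and \fullref{tab:5} to obtain the three equations $f_r(x^*)=\beta^*$, solve for $a^*$, $b^*$, $c^*$ as rational functions of $\beta^*$, substitute into the simplex constraint (\ref{aFormula}) to get $\mathcal{P}(\beta^*)=0$, and then use \fullref{AlphaIsBound} together with \fullref{Proots} to identify $\beta^*=\alpha$ and squeeze $\alpha^*=\alpha$. The only difference is cosmetic (you flag the polynomial-matching step as one you would do symbolically; the paper states the identity without elaboration).
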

\begin{proof}
	By \fullref{OptimizationQuest}, we know that $f_r(x^*)=\beta^*$ for all $r\in \F$. By the proof of the same theorem, we also know that there are really three functions at the point of optimality.
	Therefore, we have the following equations
	\begin{align*}
		\frac{1-a^*}{a^*}\cdot \frac{1}{2n-1}                 & =\beta^*, \\
		\frac{1-b^*}{b^*}\cdot\frac{1-2na^*}{4n(n-1)-1+2na^*} & =\beta^*, \\
		\frac{1-c^*}{c^*}\cdot (2n-1)                         & =\beta^*.
	\end{align*}
	Simply solving this system for $a^*,b^*$ and $c^*$ yields that
	\begin{align*}
		a^* & =\frac{1}{(2n-1)+\beta^*,}                                                       \\
		b^* & =\frac{(2n-1)(\beta^*-1)}{(4n^2-4n-1)(2n-1)(\beta^*)^2+2(2n^2-1)\beta^*-(2n-1)}, \\
		c^* & =\frac{2n-1}{2n-1+\beta^*}.
	\end{align*}
	Putting these formulas into the equation (\ref{aFormula}) gives $\Pl (\beta^*)=0$, where $\Pl$ is the polynomial defined in \fullref{Proots}.
	But, by \fullref{AlphaIsBound}, we know that $1<\beta^*$. Since $\alpha$ is the only root of $\Pl$ greater than 1, we find $\beta^*=\alpha$.
	Applying that to  $\beta^*\leq\alpha^*\leq \alpha$ from \fullref{AlphaIsBound} gives us $\alpha=\beta^*\leq\alpha^*\leq\alpha$, also proving that $\alpha^*=\alpha$.
\end{proof}


\section{Proof of the Main Theorem}\label{sec:theend}

We will first prove two lemmas that will simplify the proof of the main theorem.

\begin{lemma}\label{theend:purelyloxoImpNonelementary}
	Let $g, h$ in $PSL(2,\C)$ and $G=\langle g, h \rangle$.
	If $G$ is freely generated by $\{g,h\}$, purely loxodromic and Kleinian, then $G$ is nonelementary.
\end{lemma}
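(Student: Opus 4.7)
The plan is to argue by contradiction. Suppose $G$ is elementary. By the standard classification of discrete elementary subgroups of $\textnormal{PSL}(2,\mathbb{C})$, $G$ is either (a) finite, or (b) has a unique fixed point on $S_\infty$, or (c) fixes an unordered pair of points on $S_\infty$ (equivalently, preserves a geodesic in $\hyp$). Since $G$ is freely generated by $\{g,h\}$, it is infinite and non-abelian; the strategy is to derive a contradiction with one of these two properties in each of the three cases, using the purely loxodromic hypothesis.

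Case (a) is immediate: a finite subgroup of $\textnormal{PSL}(2,\mathbb{C})$ has only elliptic or trivial elements, contradicting purely loxodromicity (and also infiniteness). In case (b), after conjugation I would place the common fixed point at $\infty$ in the upper half-space model, so that $g$ and $h$ become simultaneously upper triangular. A direct matrix computation then shows that $[g,h]$ is upper triangular with unit diagonal, hence either trivial or parabolic. The trivial possibility is ruled out because $G$ is freely generated by $\{g,h\}$, and the parabolic possibility is ruled out because $G$ is purely loxodromic.

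In case (c), each of $g$ and $h$ either fixes both endpoints of the invariant geodesic or interchanges them; swapping the endpoints would force an element to be an elliptic involution rotating about the geodesic, contradicting loxodromicity. Hence both $g$ and $h$ fix both endpoints, and conjugating these endpoints to $0$ and $\infty$ makes $g$ and $h$ simultaneously diagonal, so they commute — contradicting the non-abelianness of the free group on $\{g,h\}$. The only real obstacle is invoking the classification of discrete elementary subgroups of $\textnormal{PSL}(2,\mathbb{C})$ in a form adapted to our hypotheses; once that is available, each of the three cases reduces to a direct check, with the parabolic-commutator computation in case (b) being the one routine calculation that deserves a line or two of justification.
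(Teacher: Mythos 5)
Your proof is correct and follows essentially the same route as the paper's: assume $G$ is elementary, invoke the Beardon classification, reduce to the two-point invariant set $\{0,\infty\}$, rule out the endpoint-swapping possibility via $g^2=\mathrm{id}$ (contradicting freeness), and then derive a contradiction from the commutator being trivial or parabolic. The only difference is cosmetic: you enumerate all three cases of the elementary classification explicitly (finite / single fixed point / two-point invariant set), whereas the paper jumps directly to the two-point case, implicitly using purely-loxodromicity to rule out the other two.
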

\begin{proof}
	Assume $G$ is elementary.
	By the classification of discrete elementary groups done in \cite[Chapter 5.1]{Beardon}, there exists a two element subset of $\hat{\C}$ that is invariant under the action of $G$.
	Without loss of generality, take the invariant subset to be $\{0,\infty\}$.
	
	If the fixed points of $g$ were not $0$ and $\infty$, we would have $g(0)=\infty$ and $g(\infty)=0$.
	This implies that $g(z)={a}/{z}$, contradicting with the freeness of $G$ as $g^2=id$.
	Similarly, the fixed points of $h$ must be $0$ and $\infty$ as well. Hence, the fixed point of $g$ and $h$ are the same. By \cite[Theorem 4.3.5]{Beardon} we conclude that either $[g,h]=id$ or $[g,h]$ is parabolic.
	The first case contradicts with the freeness of $G$, and the second one contradicts the lemma hypothesis that $G$ is purely loxodromic.
\end{proof}

\begin{lemma}\label{theend:disp_midpoint_conj_ineq}
	Let $g, h \in PSL(2,\C)$ be noncommuting and loxodromic.
	If $z_0$ is the midpoint of the shortest geodesic segment connecting the axes of $g$ and $h^{-1}gh$, then $d_g z_0 < d_{hgh^{-1}} z_0$.
\end{lemma}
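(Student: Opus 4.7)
The plan is to reduce the displacement inequality to a comparison of two distances to $\mathrm{Ax}(g)$, then apply the triangle inequality against the common perpendicular. Set $L_1=\mathrm{Ax}(g)$, so that $L_{-1}:=\mathrm{Ax}(h^{-1}gh)=h^{-1}(L_1)$, and let $\delta=\rho(L_1,L_{-1})$. Since $h^{-1}$ is an isometry of $\hyp$,
\[
d_{hgh^{-1}}(z_0)=\rho(z_0,hgh^{-1}z_0)=\rho(h^{-1}z_0,gh^{-1}z_0)=d_g(h^{-1}z_0),
\]
so the claim becomes $d_g(z_0)<d_g(h^{-1}z_0)$. Because $g$ is loxodromic, $d_g$ depends only on and is strictly increasing in the distance $\rho(\cdot,L_1)$ to its axis (a standard computation in cylindrical coordinates about $L_1$; compare \cite[Ch.~7]{Beardon}); hence it suffices to show $\rho(z_0,L_1)<\rho(h^{-1}z_0,L_1)$.

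By the defining property of $z_0$ as the midpoint of the common perpendicular, $\rho(z_0,L_1)=\rho(z_0,L_{-1})=\delta/2$. The key observation is that $h^{-1}$ maps $L_1$ onto $L_{-1}$ isometrically, so $\rho(h^{-1}z_0,L_{-1})=\rho(z_0,L_1)=\delta/2$. Letting $q_1\in L_1$ and $q_{-1}\in L_{-1}$ denote the feet of perpendicular from $h^{-1}z_0$ onto each axis, the triangle inequality yields
\[
\delta\le\rho(q_1,q_{-1})\le\rho(h^{-1}z_0,q_1)+\rho(h^{-1}z_0,q_{-1})=\rho(h^{-1}z_0,L_1)+\tfrac{\delta}{2},
\]
and therefore $\rho(h^{-1}z_0,L_1)\ge\delta/2$.

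The main obstacle is upgrading this bound to a \emph{strict} inequality. Equality throughout the chain forces $\rho(q_1,q_{-1})=\rho(L_1,L_{-1})$, so that $[q_1,q_{-1}]$ realizes the common perpendicular of $L_1$ and $L_{-1}$, and it forces $h^{-1}z_0$ to lie on that segment as its midpoint. The hypothesis that a shortest connecting geodesic segment exists forces $L_1$ and $L_{-1}$ to be disjoint and non-asymptotic in $\hyp$, so their common perpendicular is unique and its midpoint is precisely $z_0$. Equality would thus give $h^{-1}z_0=z_0$, exhibiting $z_0\in\hyp$ as a fixed point of the loxodromic isometry $h$, which is impossible since the fixed points of a loxodromic isometry lie on $\partial\hyp$. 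Consequently $\rho(h^{-1}z_0,L_1)>\delta/2=\rho(z_0,L_1)$, which by the reduction above gives $d_g z_0<d_{hgh^{-1}}z_0$, completing the argument.
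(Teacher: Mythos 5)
Your proof is correct. It is driven by the same geometric facts as the paper's argument (the monotonicity of $d_\gamma z$ in $\rho(z,A_\gamma)$, the uniqueness of the common perpendicular, and the fact that the loxodromic $h$ has no fixed point in $\hyp$), but it is packaged differently. The paper introduces displacement cylinders $Z_\lambda(\gamma)=\set{z}{d_\gamma z\le\lambda}$ with $\lambda=d_g z_0$, argues that $Z_\lambda(g)\cap Z_\lambda(h^{-1}gh)=\{z_0\}$, pushes forward by $h$ to get $Z_\lambda(g)\cap Z_\lambda(hgh^{-1})=\{hz_0\}$, and then observes $z_0\ne hz_0$. You instead transfer $d_{hgh^{-1}}z_0=d_g(h^{-1}z_0)$ directly and show $\rho(h^{-1}z_0,A_g)>\delta/2$ by the triangle inequality through the feet of perpendicular, treating the equality case by the same no-fixed-point observation. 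Your version has the advantage that the crucial uniqueness step -- that $z_0$ is the only point equidistant $\delta/2$ from both axes -- is proved explicitly via the chain $\delta\le\rho(q_1,q_{-1})\le\rho(h^{-1}z_0,L_1)+\delta/2$ and its equality analysis, whereas the paper's justification of the single-point intersection claim (``there would be an element $z\ne z_0$ such that $\rho(z,gz)=\rho(z,h^{-1}ghz)$'') is terse and does not follow immediately from $z$ merely lying in the intersection of the two cylinders. In short: same geometry, but your triangle-inequality route makes the one nontrivial step airtight, while the paper's cylinder picture is more visual but skips over exactly that step.
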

\begin{proof}
	For $\gamma \in PSL(2,\C)$, let $Z_\lambda(\gamma)=\set{z\in\mathbb{H}^3}{d_\gamma z \leq \lambda}$, the displacement cylinder of $\gamma$ with radius $\lambda$.
	By \cite[5.4.11]{Beardon}, $d_\gamma z$ is an increasing function of $\rho(z, A_\gamma)$.
	Thus, $Z_\lambda(\gamma)$ really is a cylinder with axis $A_\gamma$.
	Fix $\lambda=d_g z_0$ so that $z_0 \in Z_\lambda (g)$.
	By lemma hypothesis, we have $\rho(z_0,A_g)=\rho(z_0,A_{h^{-1}gh})$. Moreover, the terms $T_g$ and $\theta_g$ in \cite[5.4.11]{Beardon} are invariant under conjugation.
	Hence, $\rho(z_0,g z_0)=\rho(z_0, h^{-1}ghz_0)$ which proves that $z_0 \in Z_\lambda(h^{-1}gh)$.
	
	We claim that $z_0$ is the only element in $Z_\lambda(g) \cap Z_\lambda(h^{-1}gh)$.
	If it was not, then there would be an element $z \neq z_0$ such that $\rho(z,gz)=\rho(z,h^{-1}gh)$ which (again by \cite[5.4.11]{Beardon}) would imply that $\rho(z,A_g)=\rho(z,A_{h^{-1}gh})$.
	By lemma hypothesis, we would have $\rho(z_0,A_g) < \rho(z,A_g)$ so that $\lambda = \rho(z_0,gz_0) < \rho(z,gz)$, a contradiction.
	Now that we know $Z_\lambda(g) \cap Z_\lambda(h^{-1}gh)=\{z_0\}$, one can easily show that $Z_\lambda(g) \cap Z_\lambda(hgh^{-1})=\{hz_0\}$.
	Because $z_0 \in Z_\lambda(g)$, we can derive that $z_0 \notin Z_\lambda(hgh^{-1})$.
	Therefore, $\lambda = d_g z_0 < d_{hgh^{-1}} z_0$.
\end{proof}

The following theorem will be the principal effect of the optimization that was done in \fullref{sec:Optimization} to our main result.
The geometrically infinite case is quite straightforward due to the Culler-Shalen machinery.
However, the generalization to the geometrically finite case requires some high level theorems which we refer to inside the proof.

\begin{theorem}\label{theend:log_alpha}
	Let $\Gamma$ be a Kleinian, purely loxodromic subgroup of $PSL(2,C)$ that is freely generated by $\Xi=\{ \xi_1, \xi_2, \dots, \xi_n \}$.
	If $\Gamma_*=\{ 1, \xii^t, \xigent{i}{t}{j}{s}{i}{-t} \}$ and $\alpha$ is the unique root of the polynomial $\Pl$ greater than $(2n-1)^2$, then for any $z\in\mathbb{H}^3$ we have
	\begin{equation*}
		\max \set{d_\gamma z}{\gamma \in \Gamma_*} \geq \frac{1}{2}\log \alpha.
	\end{equation*}
\end{theorem}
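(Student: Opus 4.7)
The plan is to split into two cases according to whether $\Gamma$ is geometrically infinite or geometrically finite, following the outline of \fullref{Intro}. The geometrically infinite case is essentially a packaging of the machinery already developed. Fix $z\in\hyp$ and let $m\in\simp$ be the vector produced by \fullref{CSmachine:main}. For every relation $r\in\G$ that proposition yields $e^{2 d_{\gamma_r} z}\ge f_r(m)$. Since $\gamma_r\in\Gamma_*$ for every $r$, the left-hand side is bounded above by $\exp(2\max_{\gamma\in\Gamma_*} d_\gamma z)$ uniformly in $r$, so maximizing over $r\in\G$ and invoking \fullref{optimization:main} gives
\begin{equation*}
\exp\left(2\max_{\gamma\in\Gamma_*} d_\gamma z\right) \;\ge\; \max_{r\in\G} f_r(m) \;=\; G(m) \;\ge\; \alpha^* \;=\; \alpha.
\end{equation*}
Taking $\tfrac{1}{2}\log$ of both sides yields the claim in this case.

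For the geometrically finite case, the plan is to reduce to the geometrically infinite case using the deformation-theoretic results cited in \fullref{Intro}: \cite[Propositions 8.2 and 9.3]{CSParadox}, \cite[Main Theorem]{CSH} and \cite{CCHS}. These allow one to approximate $\Gamma$ algebraically by a sequence of discrete faithful representations $\rho_k\colon F_n\to PSL(2,\mathbb{C})$ converging to the inclusion of $\Gamma$, and whose images $\Gamma_k=\rho_k(F_n)$ are purely loxodromic, free, and geometrically infinite. The previous case applied to each $\Gamma_k$ at the fixed basepoint $z\in\hyp$ supplies a word $w_k$ among the finitely many abstract words representing elements of $\Gamma_*$ such that $d_{\rho_k(w_k)} z\ge\tfrac{1}{2}\log\alpha$. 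Passing to a subsequence so that $w_k$ is a fixed word $w$ and exploiting the continuity of $d_{\rho_k(w)} z$ in the matrix entries of $\rho_k(w)$, the inequality transfers to $d_{\rho(w)} z\ge \tfrac{1}{2}\log\alpha$ for $\Gamma$, which is enough.

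The principal obstacle is the geometrically finite case. The geometrically infinite case is almost a one-line consequence of \fullref{CSmachine:main} together with \fullref{optimization:main}, which is precisely why so much effort was invested in building those two results. The geometrically finite reduction, by contrast, depends on delicate properties of the representation variety of $F_n$: one must verify that the purely loxodromic, free, geometrically infinite representations are dense enough in the relevant stratum to approximate any geometrically finite $\Gamma$, and one must check that algebraic convergence suffices to preserve the displacement bound, given that the vector $m$ from \fullref{CSmachine:main} attached to each $\Gamma_k$ need not behave continuously in $k$. This is where the cited deep results from deformation theory do the heavy lifting.
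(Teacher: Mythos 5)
Your treatment of the geometrically infinite case is correct and matches the paper's proof: apply \fullref{CSmachine:main} to get $e^{2d_{\gamma_r}z}\geq f_r(m)$ for every $r\in\mathcal{G}$, note that $\gamma_r\in\Gamma_*$, and invoke \fullref{optimization:main} to conclude $\max_{\gamma\in\Gamma_*}d_\gamma z\geq\tfrac{1}{2}\log G(m)\geq\tfrac{1}{2}\log\alpha^*=\tfrac{1}{2}\log\alpha$.

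Your geometrically finite reduction, however, has a genuine gap. You propose to approximate the given geometrically finite $\Gamma$ algebraically by a sequence of geometrically infinite, purely loxodromic, free representations $\rho_k\to\rho$, and then transfer the displacement bound by continuity. But a free, purely loxodromic, geometrically finite Kleinian group of rank $n$ is convex cocompact (it is a Schottky group), and convex cocompact discrete faithful representations form an \emph{open} subset of the representation variety. Consequently no such $\Gamma$ can be realized as an algebraic limit of geometrically infinite representations: there is an open neighborhood of the inclusion $\rho$ in which every discrete faithful representation is again geometrically finite. The deformation-theoretic results you cite (\cite[Propositions 8.2 and 9.3]{CSParadox}, \cite[Main Theorem]{CSH}, \cite{CCHS}) speak to density of geometrically infinite groups in the \emph{boundary} $\overline{\gfFrak}-\gfFrak$ of the deformation space, not to density near interior points.

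The paper circumvents this by an infimum argument instead of a direct approximation. Fixing $z$, it considers $f_z(\xi)=\max\{d_\gamma z : \gamma\in\Gamma_*(\xi)\}$ on the parameter space $\gfFrak$ of geometrically finite purely loxodromic free generating tuples, observes that the infimum of $f_z$ over $\gfFrak$ is attained at some $\overline{\xi}\in\overline{\gfFrak}$, invokes prior work (\cite[Theorem 5.1]{Yu}, \cite[Theorem 4.1]{Yu2}, \cite[Theorem 4.2]{Yu4}) to place $\overline{\xi}$ on the boundary $\overline{\gfFrak}-\gfFrak$, and only then applies density of geometrically infinite representations \emph{in that boundary} together with continuity of $f_z$ to get $f_z(\overline{\xi})\geq\tfrac{1}{2}\log\alpha$; since $f_z(\xi)\geq f_z(\overline{\xi})$ for every $\xi\in\gfFrak$, the bound holds for the given $\Gamma$ as well. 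The crucial extra ingredient you are missing is the fact that the infimum is attained on the boundary, which is what lets the boundary density result do any work. Your sequence $\rho_k$ simply does not exist for a convex cocompact $\Gamma$, so the pigeonhole and continuity steps, though fine in themselves, have nothing to feed on.
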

\begin{proof}
	Fix $z \in \mathbb{H}^3$ and let $\lambda = \max \set{d_\gamma z}{\gamma \in \Gamma_*}$. Assume $\Gamma$ is geometrically infinite.
	By \fullref{CSmachine:main}($ii$), we have
	\begin{equation*}
		\lambda \geq d_{\gamma_r} z \geq \frac{1}{2} \log f_r(m)
	\end{equation*}
	for any relation  $r$ so that $\lambda \geq \frac{1}{2}\log G(m)$, where $G$ is defined in \fullref{sec:Tables}.
	Moreover, by \fullref{CSmachine:main}($i$) and \fullref{optimization:main}, we derive
	\begin{equation*}
		\lambda \geq \frac{1}{2}\log G(m) \geq \frac{1}{2}\log \left(\inf\set{G(x)}{x \in \simp}\right)=\frac{1}{2}\log\alpha.
	\end{equation*}

Assume that $\Gamma$ is geometrically finite. Our plan of attack is to prove that the infimum of all such lower boundaries over all possible free groups is attained at a special place close to the geometrically infinite case. We will have to use yet another optimization and some prior knowledge.
	Consider the function $f_z$ defined below
	\begin{align*}
		f_z: \psl^n & \longrightarrow \mathbb{R}_{+}                                \\
		\xi         & \longmapsto\text{max}\set{d_\gamma z}{\gamma\in\Gamma_*(\xi)}.
	\end{align*}
	Let $\gfFrak$ denote the subset of elements of $\psl^n$ whose components form a set of free generators of a purely loxodromic free subgroup of $\psl$ that is geometrically finite.
	As the image $f_z(\gfFrak)$ is bounded below and f is continuous, there exists a $\overline{\xi}\in\overline{\gfFrak}$ such that 
	\[f_z(\overline{\xi})=\text{inf}\set{f_z(\xi)}{\xi\in\gfFrak}.\]
	However, it has been shown in the proofs of \cite[Theorem 5.1]{Yu}, \cite[Theorem 4.1]{Yu2} and finally in \cite[Theorem 4.2]{Yu4} that any such optimal point must satisfy
	$\overline{\xi} \in \overline{\gfFrak}-\gfFrak.$
	Moreover, the subset of elements of $\psl^n$ whose components are generating purely loxodromic and geometrically infinite groups is dense in $\overline{\gfFrak}-\gfFrak$ as was shown in \cite[Propositions 8.2 and 9.3]{CSParadox}, \cite[Main Theorem]{CSH} and \cite[]{CCHS}.
	Hence, the geometrically finite case folds into the geometrically infinite case, completing the proof.
	\end{proof}

The final theorem of this paper is more general and in appearance more complicated than its counterpart in \cite{Yu4}. However, most of the complications in the proof have been handled by the previous lemmas and theorems in this paper and the arguments inside the proof is largely identical to that of the counterpart.

\begin{theorem}\label{thened:main_result}
	Let $\Gamma$ and $\alpha$ be as in \fullref{theend:log_alpha}.
	Assume that there exist $i_0, j_0$ such that the inequalities $d_{\xigent{i_0}{ }{j_0}{ }{i_0}{-1}}z_2 \leq d_{\xigent{i_0}{ }{j_0}{ }{i_0}{-1}}z_1$ and $d_\gamma z_2 < \frac{1}{2}\log \alpha$ for every $\gamma\in\Phi = \Gamma_* - \{\xi_{i_0},\ \xigent{j_0}{t}{i_0}{s}{j_0}{-t}\}$ hold, where $z_1$ and $z_2$ are the midpoints of the shortest geodesic segments connecting the axis of $\xi_{j_0}$ to the axes of $\xigent{i_0}{ }{j_0}{ }{i_0}{-1}$ and $\xigent{i_0}{-1}{j_0}{ }{i_0}{ }$, respectively. Then, we have
	\[\left|\tracesquare{\xi_{j_0}}-4\right|+\left|\trace{\xigent{j_0}{ }{i_0}{ }{j_0}{-1}\xi_{j_0}^{-1}}-2\right| \geq 2\sinh^2\left(\frac{1}{4}\log \alpha\right).\]
\end{theorem}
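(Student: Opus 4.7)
The plan is a proof by contradiction, feeding into Theorem \fullref{theend:log_alpha} as the final engine. Suppose that
\begin{equation*}
\left|\tracesquare{\xi_{j_0}}-4\right|+\left|\trace{\xigent{j_0}{ }{i_0}{ }{j_0}{-1}\xi_{j_0}^{-1}}-2\right|<2\sinh^2\!\left(\tfrac{1}{4}\log\alpha\right).
\end{equation*}
The goal is to exhibit a single point $z^{*}\in\hyp$ at which $d_\gamma z^{*}<\tfrac{1}{2}\log\alpha$ for every $\gamma\in\Gamma_*$, which would contradict Theorem \fullref{theend:log_alpha}. The point $z^{*}$ will be $z_2$: the displacements for $\gamma\in\Phi$ are handled directly by hypothesis $(\textnormal{i})$, while those for the remaining elements of $\Gamma_*$ will be produced in two stages, first at $z_1$ via a Beardon-style reversal, then transported to $z_2$ via hypothesis $(\textnormal{ii})$ and Lemma \fullref{theend:disp_midpoint_conj_ineq}.

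For the Beardon reversal, Lemma \fullref{theend:purelyloxoImpNonelementary} applied to $\langle\xi_{j_0},\xi_{i_0}\rangle$ supplies a discrete, nonelementary, purely loxodromic, two-generator free subgroup; in particular $\xi_{j_0}$ is strictly loxodromic, which places us inside the hypothesis of Beardon's Theorem $5.4.5(\textnormal{iii})$. I would then redo the proof of that theorem, reading every estimate in the opposite direction. At the midpoint $z_1$ one has $d_{\xi_{j_0}}z_1=d_{\xi_{i_0}\xi_{j_0}\xi_{i_0}^{-1}}z_1$ by definition, so the reversed chain should yield the sharp pointwise estimate
\begin{equation*}
2\sinh^2\!\left(\tfrac{1}{2}d_{\xi_{j_0}}z_1\right)\;\leq\;\left|\tracesquare{\xi_{j_0}}-4\right|+\left|\trace{\xigent{j_0}{ }{i_0}{ }{j_0}{-1}\xi_{j_0}^{-1}}-2\right|.
\end{equation*}
Combined with the assumed failure of the target inequality, this forces $d_{\xi_{j_0}}z_1<\tfrac{1}{2}\log\alpha$ and, by the midpoint property of $z_1$, also $d_{\xi_{i_0}\xi_{j_0}\xi_{i_0}^{-1}}z_1<\tfrac{1}{2}\log\alpha$.

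To transport these bounds to $z_2$, hypothesis $(\textnormal{ii})$ gives $d_{\xi_{i_0}\xi_{j_0}\xi_{i_0}^{-1}}z_2\leq d_{\xi_{i_0}\xi_{j_0}\xi_{i_0}^{-1}}z_1<\tfrac{1}{2}\log\alpha$, and Lemma \fullref{theend:disp_midpoint_conj_ineq} applied with $g=\xi_{j_0}$, $h=\xi_{i_0}$ yields $d_{\xi_{j_0}}z_2<d_{\xi_{i_0}\xi_{j_0}\xi_{i_0}^{-1}}z_2$; the defining midpoint property of $z_2$ together with the identity $d_\gamma z=d_{\gamma^{-1}}z$ then extends these bounds to every length-$3$ conjugate of $\xi_{j_0}^{\pm 1}$ by $\xi_{i_0}^{\pm 1}$. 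Combined with hypothesis $(\textnormal{i})$, which covers every remaining $\gamma\in\Gamma_*$, this produces $\max_{\gamma\in\Gamma_*}d_\gamma z_2<\tfrac{1}{2}\log\alpha$, contradicting Theorem \fullref{theend:log_alpha}. The principal obstacle is the first stage: carrying out Beardon's trace-displacement manipulations at the specific point $z_1$ while controlling the direction of every inequality, so that the resulting estimate is sharp enough to produce the factor $2\sinh^2(\tfrac{1}{2}d_{\xi_{j_0}}z_1)$ rather than some weaker surrogate. Once that estimate is in hand, the rest of the argument is elementary midpoint geometry.
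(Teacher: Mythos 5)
Your proposal follows essentially the same route as the paper: a contradiction argument whose crux is a reversed Beardon-style estimate at the midpoint $z_1$ producing
\begin{equation*}
2\sinh^2\!\left(\tfrac{1}{2}d_{\xi_{j_0}}z_1\right)\le\left|\tracesquare{\xi_{j_0}}-4\right|+\left|\trace{\xigent{j_0}{ }{i_0}{ }{j_0}{-1}\xi_{j_0}^{-1}}-2\right|,
\end{equation*}
followed by transport to $z_2$ via hypothesis (ii) and Lemma \fullref{theend:disp_midpoint_conj_ineq}, and a final contradiction with Theorem \fullref{theend:log_alpha}. The paper carries out the step you flag as the main obstacle via an explicit matrix normal form for $\xi_{j_0}$, a cross-ratio identity $bc=\tfrac{(1-w)^2}{4w}$, and the resulting bound $1+|bc|\ge\tfrac{1}{2}\cosh^2 x_0$ with $2x_0=d(\mathcal{A},\mathcal{B})$, combined with the displacement formula from Beardon 5.4.8--5.4.11 — so your anticipated inequality is exactly what the paper derives, not a surrogate.
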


\begin{proof}
	We shall make use of the following facts. The first one is a straightforward exercises to the teachings of \cite{Beardon} and the second one is due to \fullref{theend:purelyloxoImpNonelementary}.
		The theorem remains invariant under conjugation.
				The subgroup $\langle \xi_{i_0},\ \xi_{j_0}\rangle$ must be non-elementary as it is free, Kleinian and purely loxodromic.

	Since the theorem remains invariant under conjugation, we may assume without loss of generality that $\xi_{i_0}$ fixes $0$ and $\infty$.
	This gives us some useful matrix presentations
	\begin{eqnarray*}
		\xi_{i_0}  = \begin{bmatrix} u & 0 \\ 0 & \displaystyle 1/u \end{bmatrix} \text{ for some } u = |u|e^{i\theta} & \textnormal{ and } &
		\xi_{j_0}  = \begin{bmatrix} a & b \\ c & d \end{bmatrix} \text{ with } ad-bc=1
		\end{eqnarray*}
	so that we can write
	\begin{equation*}
		|\tracesquare{\xi_{i_0}}-4|+|\trace{\xigent{j_0}{ }{i_0}{ }{j_0}{-1}\xi_{j_0}^{-1}}-2| =\left(u-\frac{1}{u}\right)^2\left(1+|bc|\right) = 4\sinh^2\frac{1}{2}T_{\xi_{i_0}}+4\sin(\theta),
	\end{equation*}
	where the last equation is due to \cite[equations 5.4.8 and 5.4.10]{Beardon}.
	
	By \cite[proof of Theorem 5.1.3 case(i)]{Beardon}, $\xi_{i_0}$ and $\xi_{j_0}\xi_{i_0}\xi_{j_0}^{-1}$ have no common fixed point. This implies $a,b,c,d \neq 0$. Consider the cross-ratio equation $[1,-1,w,-w] = [0,\infty,b/d,a/c]$ which is true for any $w$ that satisfies the following equation
	\begin{equation}\label{theend:bcw_equation}
		bc = \frac{(1-w)^2}{4w}.
	\end{equation}
	This equation has a unique solution for $w$ with norm greater than 1. Let $w = e^{2z_0}$ (for $z_0=x_0+iy_0 $) be that solution.
	Replacing $w$ in (\ref{theend:bcw_equation}) yields $bc = \sinh^2z_0$ which we can use to obtain the following chain of (in)equalities
	\begin{equation}
		4|bc|^2  = |\cosh^2z_0-1|^2    
		= (\cosh2x_0-\cos2y_0)^2 
		 \geq (\cosh2x_0-1)^2     
		 \geq (\cosh^2x_0-1)^2.
	\end{equation}
	Using the above inequality and  number of simplifications give
	\begin{align}\label{theend:1_plus_bc_ineq}
		1+|bc| \geq \frac{\cosh^2x_0}{2}.
	\end{align}
	
	Let $\mathcal{A}$ be the translation axis of $\xi_{i_0}$ and $\mathcal{B}$ be the translation axis of $\xigent{j_0}{}{i_0}{}{j_0}{-1}$. 
	Also notice that $\mathcal{B}=\xi_{j_0}\mathcal{A}$. 
	Since trace squared, translation length and the square of the sine square of translation angle are all invariant under conjugation, we have the following inequalities for all $z\in\mathbb{H}^3$
	\begin{eqnarray}\label{theend:long_ineq1}
		\sin^2\frac{1}{2}d_{\xi_{i_0}}z  & = &  \sinh\frac{1}{2}T_{\xi_{i_0}} \cosh d(z,\mathcal{A})+\sin^2\theta_{\xi_{i_0}}\sinh^2 d(z,\mathcal{A})\nonumber    \\                &   \leq & \left(\sinh^2\frac{1}{2}T_{\xi_{i_0}}+\sin^2\theta_{\xi_{i_0}}\right)\cosh^2 d(z,\mathcal{A})
		\end{eqnarray}
		and
\begin{eqnarray}		\label{theend:long_ineq2}
\sin^2\frac{1}{2}d_{\xigent{j_0}{}{i_0}{}{j_0}{^-1}}z & = &  \sinh\frac{1}{2}T_{\xi_{i_0}} \cosh d(z,\mathcal{B})+\sin^2\theta_{\xi_{i_0}}\sinh^2 d(z,\mathcal{B})\\ & \leq & \left(\sinh^2\frac{1}{2}T_{\xi_{i_0}}+\sin^2\theta_{\xi_{i_0}}\right)\cosh^2 d(z,\mathcal{B}).\nonumber
	\end{eqnarray}
	By using the above inequalities and the fact that the hyperbolic sine and cosine are increasing for $x\geq 0$, we conclude that
	\begin{align}\label{theend:themax_ineq}
		\sinh^2\frac{1}{2}\max\left\{d_{\xi_{i_0}}z,d_{\xigent{j_0}{}{i_0}{}{j_0}{^-1}}z\right\} \leq \frac{1}{4}\left|u-\frac{1}{u}\right|^2\cosh^2 \max\left\{d_z\mathcal{A},d_z\mathcal{B}\right\}.
	\end{align}
	Let $\Psi \in \mathcal{M}$ be defined by taking $[0,\infty,h0,h\infty]$ to $[1,-1,w,-w]$, where $w$ is defined in (\ref{theend:bcw_equation}).
	Under this setting $\Psi\mathcal{A}$ will be the geodesic from $-1$ to $1$ and $\Psi\mathcal{B}$ will be the geodesic from $-w$ to $w$ so that we have
	\begin{align*}
		d(\mathcal{A},\mathcal{B}) = d(j,|w|j) = \log |w| = 2x_0.
	\end{align*}
	By the above equality and the definition of $z_1$ given in theorem hypothesis, we can establish the fact that $d(z_1,\mathcal{A})=x_0=d(z_1,\mathcal{B})$. Applying this fact to the equations in (\ref{theend:long_ineq1}) and (\ref{theend:long_ineq2}) gives us another fact $d_{\xi_{i_0}}z_1 = d_{\xigent{j_0}{}{i_0}{}{j_0}{-1}}z_1$. These two facts can be used to simplify (\ref{theend:themax_ineq}) into the first of the following inequalities. Using (\ref{theend:1_plus_bc_ineq}) gives us the second one, which is
	\begin{align}\label{theend:finalineq}
		\sinh^2 \frac{1}{2} d_{\xi_{i_0}}z_1 \leq \frac{1}{4}\left|u-\frac{1}{u}\right|^2\cosh^2x_0 \leq \frac{1}{2}\left|u-\frac{1}{u}\right|^2(1+|bc|).
	\end{align}
	
	Finally, assume that the opposite of the theorem is true. In other words, assume that we have the inequality
	\begin{equation*}
		|\tracesquare{\xi_{j_0}}-4|+|\trace{\xigent{j_0}{ }{i_0}{ }{j_0}{-1}\xi_{j_0}^{-1}}-2| < 2\sinh^2\left(\frac{1}{4}\log \alpha\right).
	\end{equation*}
	Because we have $d_{\xigent{i_0}{ }{j_0}{ }{i_0}{-1}}z_2 \leq d_{\xigent{i_0}{ }{j_0}{ }{i_0}{-1}}z_1$ and $d_\gamma z_2 < \frac{1}{2}\log \alpha$ for every $\gamma\in\Phi$ by the hypothesis, we get $d_\gamma z_2 < \frac{1}{2}\log \alpha$ for all $\gamma \in \Gamma_*$ by (\ref{theend:finalineq}) and \fullref{theend:disp_midpoint_conj_ineq}.
	This is a contradiction to \fullref{theend:log_alpha}.	
\end{proof}


A. Nedim Narman, PhD

Department of Mathematics, Faculty of Engineering and Natural Sciences, 

İstanbul Bilgi University, İstanbul, Turkey. 

email: nedimnarman@gmail.com, \href{https://orcid.org/0000-0001-6819-9018}{https://orcid.org/0000-0001-6819-9018}\\

İlker S. Yüce, PhD

Department of Mathematics, Faculty of Arts and Sciences, 

Yeditepe University, İstanbul, Turkey. 

email: ilker.yuce@yeditepe.edu.tr, \href{https://orcid.org/0000-0003-3224-7625}{https://orcid.org/0000-0003-3224-7625}


\end{document}